\DeclareSymbolFont{bbold}{U}{bbold}{m}{n}
\DeclareSymbolFontAlphabet{\mathbbold}{bbold}
\newcommand{\N}{\mathbb{N}}
\newcommand{\Z}{\mathbb{Z}}
\newcommand{\R}{\mathbb{R}}
\newcommand{\C}{\mathbb{C}}
\renewcommand{\epsilon}{\varepsilon}
\renewcommand{\i}{{\rm i}}
\DeclareMathOperator{\grad}{grad}
\DeclareMathOperator{\curl}{curl}
\DeclareMathOperator{\dive}{div}
\DeclareMathOperator{\rge}{ran}
\DeclareMathOperator{\kar}{ker}
\DeclareMathOperator{\dom}{dom}
\DeclareMathOperator{\m}{m}
\renewcommand{\Re}{\operatorname{Re}}
\DeclareMathAccent{\Circ}{\mathalpha}{operators}{"17}
\let\eps\varepsilon
\let\phi\varphi
\let\le\leqslant
\let\leq\leqslant
\let\ge\geqslant
\let\geq\geqslant
\def\@row#1,{#1\@ifnextchar;{\@gobble}{&\@row}}
\def\@matrix{%
	\expandafter\@row\my@arg,;%
	\@ifnextchar({\\ \get@in@paren{\@matrix}}{\after@matrix}%
}
\def\matrixtype#1#2#3{%
	\ifmmode\def\after@matrix{\end{#2}\right#3}%
\else\def\after@matrix{\end{#2}\right#3$}$\fi
\left#1\begin{#2}\get@in@paren{\@matrix}$%
}
\def\@column#1,{#1\@ifnextchar;{\@gobble}{\\ \@column}}
\newcommand\vect{}
\def\svect(#1){\left(\begin{smallmatrix}\@column#1,;\end{smallmatrix}\right)}
\def\vect{\get@in@paren{\@vect}}
\def\@vect{\left(\begin{matrix}\expandafter\@column\my@arg,;\end{matrix}\right)}
\def\get@in@paren#1({\def\my@arg{}\def\my@rest{}\def\after@get{#1}\get@arg}
\let\e@a\expandafter
\def\get@arg#1){\e@a\kl@test\my@rest#1(;}
\def\kl@test#1(#2;{\e@a\def\e@a\my@arg\e@a{\my@arg#1}%
	\ifx:#2:\let\my@exec\after@get
	\else\let\my@exec\get@arg
	\e@a\def\e@a\my@arg\e@a{\my@arg(}%
	\def@rest#2;%
	\fi\my@exec}
\def\def@rest#1(;{\def\my@rest{#1\kl@zu}}
\def\kl@zu{)}
\newcommand\MyPairedDelimiter{%
	\@ifstar{\My@Paired@Delimiter{{}}}
	{\My@Paired@Delimiter{}}%
}
\newcommand\My@Paired@Delimiter[4]{%
	\newcommand#2{%
		\@ifstar{\start@PD{#1}{\delimitershortfall=-1sp}{#3}{#4}}
		{\start@PD{#1}{}{#3}{#4}}%
	}%
}
\newcommand\start@PD[5]{%
	#1\mathopen{\mathpalette\put@delim@helper{\put@delim{#2}{#3}{.}{#5}}}%
	#5%
	\mathclose{\mathpalette\put@delim@helper{\put@delim{#2}{.}{#4}{#5}}}%
}
\newcommand\put@delim@helper[2]{%
	\hbox{$\m@th\nulldelimiterspace=0pt #2#1$}%
}
\newcommand\put@delim[5]{%
	\setbox\z@\hbox{$\m@th#5{#4}$}%
	\setbox\tw@\null
	\ht\tw@\ht\z@ \dp\tw@\dp\z@
	#1#5%
	\left#2\box\tw@\right#3%
}
\MyPairedDelimiter*{\abs}{\lvert}{\rvert}
\MyPairedDelimiter*{\norm}{\lVert}{\rVert}
\MyPairedDelimiter{\set}{\{}{\}}
\theoremstyle{plain} 
\newtheorem{theorem}{Theorem}[section]
\newtheorem{corollary}[theorem]{Corollary}
\newtheorem{lemma}[theorem]{Lemma}
\newtheorem{proposition}[theorem]{Proposition}
\theoremstyle{definition}
\newtheorem{example}[theorem]{Example}
\newtheorem{hypothesis}[theorem]{Hypothesis}
\newtheorem{definition}[theorem]{Definition}
\newtheorem{remark}[theorem]{Remark}
\newcommand{\ep}{\varepsilon}
\begin{document}

\medmuskip=4mu plus 2mu minus 3mu
\thickmuskip=5mu plus 3mu minus 1mu
\belowdisplayshortskip=9pt plus 3pt minus 5pt

\title{Fibre Homogenisation}

\author{Shane Cooper and Marcus Waurick}

\date{}

\maketitle

\begin{abstract}
In this article we present a novel method for studying the asymptotic behaviour, with order-sharp error estimates,  of the resolvents of parameter-dependent operator families. The method is  applied to the study of differential equations with rapidly oscillating coefficients in the context of  second-order PDE systems and the Maxwell system. This produces a non-standard homogenisation result that is characterised by `fibre-wise' homogenisation of the related Floquet-Bloch PDEs. These fibre-homogenised resolvents are shown to be asymptotically equivalent to a whole class of operator families, including those obtained by standard homogenisation methods.
\end{abstract}

Keywords: resolvent estimates, fibre homogenisation, Gelfand transform, oscillating coefficients, second-order PDE systems, Maxwell's equations
\section{Introduction}
%
\hspace{1em} This article is concerned with the asymptotic analysis of  parameter-dependent operators that admit a fibre decomposition. Such families appear for example in the asymptotic analysis of differential operators with rapidly oscillating periodic coefficients $\mathcal{B}_\ep$ defined in the whole space $L^2(\mathbb{R}^d)$. In this example context, the   period of the coefficients is the parameter $\ep$ and a typical goal is to understand the behaviour of solutions $u_\ep$, for a given force $f$, to
\[
\mathcal{B}_\ep u_\ep = f
\]
for small $\ep$.

\noindent A well-known approach  to determine the asymptotic behaviour of $u_\ep$ is the process of homogenisation (for which there is a vast body of literature available, see for example \cite{BeLiPa}, \cite{JKO} for an introduction to the field). In this process, the sequence $u_\ep$ is typically determined to converge, in an appropriate sense, to a limit $u$ and then one aims to establish the existence of an `homogenised' operator for which the identity $u = \mathcal{B}^{-1} f$ holds.  Upon establishing the homogenised operator, one can subsequently ask about the magnitude, in an appropriate metric, of the  difference $u_\ep - u = (\mathcal{B}_\ep^{-1} - \mathcal{B}^{-1} ) f $. Quantifying this error, uniformly in $\ep$ and $f$, is important, for example, in determining the asymptotic behaviour of the spectral properties of the family $\mathcal{B}_\ep$ and in the study of evolution problems $(\tfrac{d}{dt})^\alpha u_\ep + \mathcal{B}_\ep u_\ep = f$, $\alpha \in\{1,2\}$. 

\noindent In the context of second-order differential periodic operators, error estimates of the order $\sqrt{\ep}$ have been known for some time, see for example \cite{JKO}. While, the expected (order-sharp) order $\ep$ error estimates for $L^2(\mathbb{R}^d)$ right-hand side where first obtained in the works of Birman-Suslina \cite{BiSu}. Therein, they utilise the fact that $L^2(\mathbb{R}^d)$ is unitarily equivalent, via the Gelfand transform, to the space $L^2( [-\pi,\pi)^d; L^2( [0,1)^d))$, and that the operator $\mathcal{B}_\ep$ is unitarily equivalent to the fibre integral $\int_\Theta^\oplus \mathcal{B}_\ep(\theta) d\theta$ where $\mathcal{B}_\ep(\theta)$ is the second-order differential operator accompanied with quasi-periodic boundary conditions. Their subsequent analysis then focuses on this decomposition and a spectral study of the resolvents of $\mathcal{B}_\ep$ in a neighbourhood of the bottom of the spectrum. The idea of a spectral study via the Gelfand transform had been used previously in the works \cite{CoVa,J1} to obtain error estimates in homogenisation; although these works did not obtain order-sharp estimates in the uniform-operator topology. Very recently, in \cite{Se} the homogenisation with order-sharp operator-norm error estimates is established for second-order periodic operators with non-selfadjoint coefficients that admit global slowly varying and local rapidly oscillating dependence. 
 We mention for completeness, that in context of  second-order elliptic systems with periodic coefficients in bounded domains, error estimates in homogenisation of the order $\ep |\ln \ep|^\alpha$, $\alpha >0$, have been obtained by different techniques in the works \cite{KeLiSh,JPa}; order-sharp estimates were obtained  in bounded domains: for scalar equations using periodic unfolding in \cite{Gr}, and for systems, using combinations of the techniques in  \cite{BiSu} and \cite{JPa},  in \cite{Su1,Su2}.

{ 
On the subject of evolution(ary) problems, we make comments relevant to this article on the works \cite{Wa1,Wa2,Wa3}. In these works,  the homogenised systems for various time-dependent problems posed in bounded domains are obtained  by an interesting projection based technique. This projection technique was recently combined with the Gelfand transform to provide order-sharp error estimates between resolvents of the full time-dependent one-dimensional visco-elastic operator and its homogenised limit, see \cite{ChWa}. Therein, the method of proof relied on the one-dimensional nature of the problem and the so-called Schur complement.
}

In this article, our main focus of study is the behaviour of resolvents of parameter-dependent families of fibre-integral operators $\int_\Theta^\oplus \mathcal{B}_\ep(\theta) d\theta $ on a space $\int_{\Theta}^{\oplus} H d\theta$,
where
\[
\mathcal{B}_\ep(\theta) = M(\theta) + \tfrac{1}{\ep} A(\theta),
\]
for bounded linear $M(\theta)$ and possibly unbounded linear skew-selfadjoint $A(\theta)$. We are interested in studying the behaviour of $\mathcal{B}_\ep(\theta)^{-1}$ in the uniform-operator topology, uniform in $\theta$, for small $\ep$. Unlike in standard homogenisation approaches, where one would determine a so-called homogenised limit operator $\mathcal{B}$ for a given $\mathcal{B}_\ep$ and then determine bounds on the difference $\mathcal{B}_\ep^{-1} - \mathcal{B}^{-1}$ (via the fibre-integral representation or otherwise),  we emphasise here that we directly analyse the behaviour of $\mathcal{B}_\ep(\theta)^{-1}$  for sufficiently small, non-zero, $\ep$. The reason we adopt this approach is that, in general, the  point-wise (in $\theta$) homogenised limits (in $\ep$) of the operators $\mathcal{B}_\ep(\theta)$ are not the uniform limits. As such, to obtain error estimates one would need to come up with an approach to reconcile this difference and produce uniform in $\theta$ error bounds. (We mention in passing that in the context of high-contrast homogenisation of second-order differential operators, order-sharp operator-norm error estimates where obtained, in \cite{ChCo}, upon the recovery of uniform limits from point-wise limits  by an operator-theoretic analogue of matched asymptotic expansions.) Here, we develop a new method of studying the uniform in fibre behaviour of resolvents to fibre-integral families in terms of the small parameter. This method, exposed in Section \ref{s:gen}, is based on the observation that the lack of uniformity of the point-wise asymptotics of $\mathcal{B}_\ep(\theta)$ is due to the fact that spectrum of the operator family $(A(\theta))_\theta$ intersects zero for certain values of $\theta$. Therefore, to study the asymptotics, our method revolves around decomposing the underlying Hilbert space $H$ into a space $R(\theta)$ in which this operator $A(\theta)$ is uniformly invertible and its orthogonal complement $N(\theta)$. Subsequently, we can decompose the operator $\mathcal{B}_\ep(\theta)$ into uniformly invertible and singular parts; { this decomposition is based on developing the projection technique used in \cite{Wa1,Wa2,Wa3} and \cite{ChWa}. (We comment though that our approach does not need to rely on existence of the inverse to the Schur-complement. This improves the constants-of-error obtained in the uniform-operator norm bounds.)} Upon such a decomposition, it is a simple task to then determine that the uniform leading-order behaviour, for small $\ep$, of the family $\mathcal{B}_\ep(\theta)$ in the uniform-operator topology  is given by the projection of $\mathcal{B}_\ep(\theta)$ to $N(\theta)$, see Theorem \ref{t:mtgr} and Proposition \ref{p:sc3}. Remarkably, and the reason why we coin this method fibre homogenisation, is that this projection in the context of differential operators with rapidly oscillating coefficients gives rise to a fibre-dependent analogue to the standard homogenised coefficients, from classical theory, that is asymptotically equivalent to but, in general, different to the traditional homogenised matrix. This is the subject of Sections \ref{s:example} and \ref{sec:ahom}. Additionally, as a bi-product of this analysis we determine a whole family of operators that are asymptotically equivalent (in terms of resolvents) to the operator $\mathcal{B}_\ep$; these operators are characterised by being equal to $\mathcal{B}_{\ep}(\theta)$ on the space $N(\theta)$; this statement is made precise in  Theorem \ref{t:hom2}.

In closing, a consequence of the analysis in this article is that we present new results which capture the leading-order singular behaviour, in operator-norm, of the resolvents of fibre-integral operator families depending on a small parameter. These results in turn allow one to describe a whole class of asymptotically equivalent operator families, including those found by standard homogenisation methods (in the context of differential operators with rapidly oscillating coefficients).   The method presented in this article is not confined to the study of self-adjoint operator families arriving from second-order PDE systems; the scheme admits for example second-order PDE systems with non-selfadjoint coefficients, see Section \ref{s:example} as well as the Maxwell system, see Section \ref{s:max}. Moreover, our study easily fits into the static variants of the framework of evolutionary equations developed by Picard et al., see, e.g., \cite[Chapter 6]{Pi} or \cite{PiTrWaWe}. {In particular, we provide quantitative estimates for the first time to static variants of the systems in \cite{Wa1,Wa2,Wa3}.}

\section{Abstract fibre homogenisation}\label{s:gr}
\label{s:gen}
Let $\Theta$ be a non-empty set. For a given family of Hilbert spaces $(H_\theta)_{\theta\in \Theta}$, $\ep \in (0,\infty)$,  $M(\theta)\in L(H_\theta)$ with $\|M\|_\infty\coloneqq \sup_{\theta\in\Theta} \|M(\theta)\|<\infty$, and $A(\theta)\colon \dom(A(\theta))\subseteq H_\theta \to H_\theta$ densely defined and closed, we consider the operator family
\[
\mathcal{B}_\ep(\theta) : = M(\theta)+\tfrac{1}{\eps}A(\theta).
\]
Under the assumptions that there exists a $c \in (0,\infty)$ such that
\begin{equation}
\label{assump1}
\begin{aligned}
 \forall\, \theta \in \Theta: & \quad & \Re M(\theta) : = \tfrac{1}{2} (M(\theta) + M(\theta)^*)\geq c,  & \quad \text{and} \quad & A(\theta)=-A(\theta)^*,
\end{aligned}
\end{equation}
the operator $\mathcal{B}_\ep(\theta)$ is invertible for all $\ep, \theta$, cf.\ Lemma \ref{l:invblty} below. Typically, in homogenisation problems, fibre integral operators of the form $\int^{\oplus}_{\Theta} \mathcal{B}_\ep(\theta) d\theta$ appear. For example via the Gelfand transform for differential operators with periodic coefficients, see Sections \ref{s:example} and \ref{s:max}. A means to address the asymptotics, as $\ep$ tends to zero,  of such operators is to consider the behaviour of the resolvents for small $\ep$.
For this reason, we are interested in studying the uniform in $\theta$ behaviour for small $\ep$  for the inverse operators $\mathcal{B}_\ep(\theta)^{-1}$.

We now provide a general set of assumptions that, if satisfied, allow one to construct such asymptotics.
\begin{hypothesis}\label{hyp:gr} Assume for all $\theta \in \Theta$, there exists a closed subspace $N(\theta) \subseteq H_\theta$ with $R(\theta)\coloneqq N(\theta)^\perp$ such that, for the canonical embeddings $\iota_{N(\theta)}\colon N(\theta)\hookrightarrow H_\theta$, $\iota_{R(\theta)}\colon R(\theta)\hookrightarrow H_\theta$ and the orthogonal projections $\pi_{N(\theta)}\coloneqq \iota_{N(\theta)}\iota_{N(\theta)}^*$  $\pi_{R(\theta)}\coloneqq \iota_{R(\theta)}\iota_{R(\theta)}^*$, the following conditions hold: \\
  \begin{enumerate}[label=(\alph*)]
  	    \item\label{gr1} $A(\theta)\pi_{N(\theta)}$ is bounded for all $\theta\in \Theta$.
\item\label{gr2}{$\pi_{R(\theta)} A(\theta)\subseteq A(\theta)\pi_{R(\theta)}$ for all $\theta \in \Theta$.}
   \item\label{gr3}  $\iota_{R(\theta)}^*A(\theta)\iota_{R(\theta)}$ is, uniformly in $\theta$, boundedly invertible:
   \begin{equation}\label{eq:hypCR}
      C_R\coloneqq \sup_{\theta\in \Theta} \|\left(\iota_{R(\theta)}^*A(\theta)\iota_{R(\theta)}\right)^{-1}\|_{L(R(\theta))}<\infty.
   \end{equation}
  \end{enumerate}
\end{hypothesis}
 The main theorem of this section is as follows.
\begin{theorem}\label{t:mtgr} Assume \eqref{assump1} and Hypothesis \ref{hyp:gr}. Then, for all $\epsilon\in\big(0, 1 /2 C_R \Vert M \Vert_\infty\big)$, $\theta \in \Theta$ one has 
\[
 \| \mathcal{B}_\epsilon(\theta)^{-1} - \big(\pi_{N(\theta)}M(\theta)\pi_{N(\theta)}+\tfrac{1}{\epsilon}A(\theta)\big)^{-1} \| \\ \leq  \kappa(\|M\|_\infty,C_R,c) \eps,
\]
where
\[
  \kappa(\|M\|_\infty,C_R,c)\coloneqq  2C_R \big( 1 + \tfrac{\Vert M \Vert_\infty}{c} \big)^2 + C_R.
\]
\end{theorem}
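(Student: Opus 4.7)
The plan is to exploit the block structure induced by Hypothesis~\ref{hyp:gr} together with the skew-selfadjointness of $A(\theta)$. By \ref{gr2} and $A(\theta)^*=-A(\theta)$, both $N(\theta)$ and $R(\theta)$ are invariant under $A(\theta)$, so relative to the decomposition $H_\theta = N(\theta)\oplus R(\theta)$ the operator $A(\theta)$ is block-diagonal with a bounded skew-selfadjoint part $A_N\coloneqq \iota_{N(\theta)}^* A(\theta)\iota_{N(\theta)}$ on $N(\theta)$ (by \ref{gr1}) and a boundedly invertible skew-selfadjoint part $A_R\coloneqq \iota_{R(\theta)}^* A(\theta)\iota_{R(\theta)}$ on $R(\theta)$ with $\|A_R^{-1}\|\leq C_R$ (by \ref{gr3}). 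Writing $M_{ij}$ for the four blocks of $M(\theta)$ (e.g.\ $M_{NR}=\iota_{N(\theta)}^* M(\theta)\iota_{R(\theta)}$),
\[
\mathcal{B}_\epsilon(\theta) = \begin{pmatrix} M_{NN}+\tfrac{1}{\epsilon}A_N & M_{NR} \\ M_{RN} & M_{RR}+\tfrac{1}{\epsilon}A_R \end{pmatrix}, \qquad T_\epsilon\coloneqq \pi_{N(\theta)}M(\theta)\pi_{N(\theta)}+\tfrac{1}{\epsilon}A(\theta) = \begin{pmatrix} M_{NN}+\tfrac{1}{\epsilon}A_N & 0 \\ 0 & \tfrac{1}{\epsilon}A_R \end{pmatrix}.
\]
The upper-left diagonal block inherits $\Re M_{NN}\geq c$, hence is invertible with inverse of norm at most $1/c$ (same argument as Lemma~\ref{l:invblty}); the lower-right block inverts to $\epsilon A_R^{-1}$, bounded by $\epsilon C_R$. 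Thus $T_\epsilon$ is invertible.

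Fixing $f\in H_\theta$, set $x\coloneqq \mathcal{B}_\epsilon(\theta)^{-1}f$ and $y\coloneqq T_\epsilon^{-1}f$, and split $x=x_N+x_R$, $y=y_N+y_R$, $f=f_N+f_R$. Subtracting the block-$N$ and block-$R$ equations for $x$ from those for $y$ produces
\[
(M_{NN}+\tfrac{1}{\epsilon}A_N)(x_N-y_N) = -M_{NR}\,x_R, \qquad (M_{RR}+\tfrac{1}{\epsilon}A_R)(x_R-y_R) = -M_{RN}\,x_N - M_{RR}\,y_R.
\]
The crux is an $O(\epsilon)$ a priori bound on $\|x_R\|$. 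The standard accretivity bound $\|x\|\leq\|f\|/c$ from \eqref{assump1} is only $O(1)$, so I would instead factor $\tfrac{1}{\epsilon}A_R$ out of the block-$R$ equation $(M_{RR}+\tfrac{1}{\epsilon}A_R)x_R = f_R - M_{RN}x_N$: for $\epsilon<1/(2C_R\|M\|_\infty)$ a Neumann series gives $\|(M_{RR}+\tfrac{1}{\epsilon}A_R)^{-1}\|\leq 2\epsilon C_R$, so combining with $\|x_N\|\leq\|f\|/c$ yields $\|x_R\|\leq 2\epsilon C_R(1+\|M\|_\infty/c)\|f\|$. In the same spirit $\|y_R\|\leq \epsilon C_R\|f\|$.

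Plugging these bounds back into the two displayed identities is now bookkeeping. The $N$-identity gives $\|x_N-y_N\|\leq 2\epsilon C_R(\|M\|_\infty/c)(1+\|M\|_\infty/c)\|f\|$, and the $R$-identity, using $2\epsilon C_R\|M\|_\infty<1$ to absorb the second-order $\epsilon\|y_R\|$ contribution, gives $\|x_R-y_R\|\leq 2\epsilon C_R(\|M\|_\infty/c)\|f\| + \epsilon C_R\|f\|$. Adding the two component estimates and rearranging yields $\|x-y\|\leq \epsilon C_R\bigl(2(1+\|M\|_\infty/c)^2-1\bigr)\|f\|\leq \epsilon\kappa\|f\|$. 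The single genuinely nontrivial step is the promotion of $\|x_R\|$ from $O(1)$ to $O(\epsilon)$; pleasingly the argument avoids inverting the full Schur complement $M_{NN}+\tfrac{1}{\epsilon}A_N - M_{NR}(M_{RR}+\tfrac{1}{\epsilon}A_R)^{-1}M_{RN}$, consistent with the methodological improvement the authors flag in the introduction.
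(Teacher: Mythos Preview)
Your proof is correct and follows essentially the same strategy as the paper: the same block decomposition $H_\theta=N(\theta)\oplus R(\theta)$, the same Neumann-series bound $\|(M_{RR}+\tfrac{1}{\epsilon}A_R)^{-1}\|\leq 2\epsilon C_R$ (the paper's Proposition~\ref{p:sc1}\ref{en:lsgr1}), and the same accretivity bound $\|(M_{NN}+\tfrac{1}{\epsilon}A_N)^{-1}\|\leq 1/c$ (Proposition~\ref{p:sc1}\ref{en:lsgr1.5}). The only organisational difference is that the paper passes through the intermediate approximation $\iota_{N(\theta)}\mathcal{B}_{\epsilon,N}(\theta)^{-1}\iota_{N(\theta)}^*$ (Proposition~\ref{p:sc3}) and then adds the $R$-part of $T_\epsilon^{-1}$, whereas you compare $x=\mathcal{B}_\epsilon(\theta)^{-1}f$ and $y=T_\epsilon^{-1}f$ directly; this is why you end up with the slightly sharper constant $C_R\bigl(2(1+\|M\|_\infty/c)^2-1\bigr)$ rather than $\kappa$.
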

\begin{remark}{ \ } 
	
	\begin{enumerate}
		\item{The existence of $\big(\pi_{N(\theta)}M(\theta)\pi_{N(\theta)}+\tfrac{1}{\epsilon}A(\theta)\big)^{-1}$ is addressed in the proof of Theorem \ref{t:mtgr}.}
		\item{ For convenience of the reader and to keep the statements that follow as accessible as possible, we do not record the explicit number $\kappa(\|M\|_\infty,C_R,c)$ in front of $\epsilon$ and just write $\kappa$. We emphasise, however, the following asymptotic properties:
			\begin{align*}
			&\limsup_{c\to 0} c^2\kappa(\|M\|_\infty,C_R,c) = 2 C_R\Vert M \Vert_\infty <\infty,
			\\ & \limsup_{C_R\to \infty} \tfrac{\kappa(\|M\|_\infty,C_R,c)}{C_R} =  2 \big( 1 + \tfrac{\Vert M \Vert_\infty}{c} \big)^2 + 1<\infty,\text{ and}
			\\ & \limsup_{\|M\|_\infty\to \infty} \tfrac{\kappa(\|M\|_\infty,C_R,c)}{\|M\|_\infty^2} =  \tfrac{2C_R}{c^2}  <\infty.
			\end{align*} 
			Most prominently, the last equality becomes important, if one wants to study time-dependent problems, see \cite{ChWa}. The decisive observation frequently used in the present text is that $\kappa(\|M\|_\infty,C_R,c)$ is \emph{independent of} $\epsilon>0$ (if sufficiently small) \emph{and all} $\theta\in\Theta$.
		}
		\item{ We remark here that $\|\mathcal{B}_\ep(\theta)^{-1}\|_{L(H_\theta)}\leq 1/c$, see Corollary \ref{c:binv} below. Moreover, it is possible to show that $\|\big(\pi_{N(\theta)}M(\theta)\pi_{N(\theta)}+\tfrac{1}{\epsilon}A(\theta)\big)^{-1}\|\leq \max\{\frac{1}{c},\epsilon C_R\}$ for all $\ep>0$ and $\theta\in \Theta$, also see Proposition \ref{p:sc1}. Hence, it is possible to prove an estimate of the form
		\[
		   \|\mathcal{B}_\epsilon(\theta)^{-1} - \big(\pi_{N(\theta)}M(\theta)\pi_{N(\theta)}+\tfrac{1}{\epsilon}A(\theta)\big)^{-1} \| \leq \tilde\kappa(\|M\|_\infty,C_R,c)\epsilon
		\] with $\tilde\kappa$ satisfying a similar asymptotic behavior as $\kappa$: 
                \[
                  \limsup_{c\to 0} c^2\tilde\kappa(\|M\|_\infty,C_R,c),\, \limsup_{C_R\to \infty} \tfrac{\tilde\kappa(\|M\|_\infty,C_R,c)}{C_R},\,  \limsup_{\|M\|_\infty\to \infty} \tfrac{\tilde\kappa(\|M\|_\infty,C_R,c)}{\|M\|_\infty^2}<\infty.			
                \]For this reason, we may also drop the condition that $\ep$ has to be sufficiently small. We choose to do this for the remainder of the manuscript.
		}
	\end{enumerate}
\end{remark}
Theorem \ref{t:mtgr} does not only provide us with leading-order asymptotics of $\mathcal{B}_\ep^{-1}$, it presents a way of comparing two operator families that `coincide' on $N(\theta)$. 
 More precisely, the following result holds.
\begin{theorem}
\label{t:hom2}
Assume \eqref{assump1} and Hypothesis \ref{hyp:gr}. Consider, for $\theta \in \Theta$, $\widetilde{M}(\theta)  \in L(H_\theta)$  such that $\Vert \widetilde{M} \Vert_\infty < \infty$, with  $\forall \theta \in \Theta: \Re{\widetilde{M}(\theta)} \ge c$. Furthermore, assume that
\[
\pi_{N(\theta)}M(\theta)\pi_{N(\theta)} = \pi_{N(\theta)}\widetilde{M}(\theta)\pi_{N(\theta)} \quad (\theta \in \Theta).
\]
Then, there exists $\kappa >0$ such that for all $\theta \in \Theta$ and $\ep >0$ one has
\[
\Vert {\mathcal{B}}_\ep(\theta)^{-1} - \big(\widetilde{M}(\theta) + \tfrac{1}{\ep} A(\theta)\big)^{-1} \Vert \le \kappa \ep.
\]
\end{theorem}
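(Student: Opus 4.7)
The plan is to apply Theorem \ref{t:mtgr} twice, once to each operator family, and then combine the two estimates via the triangle inequality. The key observation is that Hypothesis \ref{hyp:gr} only constrains $A(\theta)$ and the subspace decomposition $H_\theta = N(\theta)\oplus R(\theta)$; it makes no reference to the bounded part. Consequently, the hypothesis is satisfied by $\widetilde{\mathcal{B}}_\ep(\theta)\coloneqq \widetilde{M}(\theta)+\tfrac{1}{\ep}A(\theta)$ exactly because it is satisfied by $\mathcal{B}_\ep(\theta)$. Likewise, condition \eqref{assump1} holds for $\widetilde{M}$ by the assumption $\Re \widetilde{M}(\theta)\geq c$ together with $\widetilde{M}(\theta)\in L(H_\theta)$ and $\|\widetilde{M}\|_\infty<\infty$.

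First, I would invoke Theorem \ref{t:mtgr} applied to $\mathcal{B}_\ep(\theta)$ to obtain
\[
  \bigl\| \mathcal{B}_\ep(\theta)^{-1} - \bigl(\pi_{N(\theta)}M(\theta)\pi_{N(\theta)} + \tfrac{1}{\ep}A(\theta)\bigr)^{-1} \bigr\| \leq \kappa(\|M\|_\infty,C_R,c)\,\ep,
\]
uniformly in $\theta\in\Theta$ (for $\ep$ small enough, or for all $\ep>0$ by the second item of the preceding remark). Then I would apply Theorem \ref{t:mtgr} again, this time to $\widetilde{\mathcal{B}}_\ep(\theta)$, yielding
\[
  \bigl\| \widetilde{\mathcal{B}}_\ep(\theta)^{-1} - \bigl(\pi_{N(\theta)}\widetilde{M}(\theta)\pi_{N(\theta)} + \tfrac{1}{\ep}A(\theta)\bigr)^{-1} \bigr\| \leq \kappa(\|\widetilde{M}\|_\infty,C_R,c)\,\ep.
\]

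Now the hypothesis $\pi_{N(\theta)}M(\theta)\pi_{N(\theta)} = \pi_{N(\theta)}\widetilde{M}(\theta)\pi_{N(\theta)}$ implies the two ``reduced'' operators appearing on the right are literally the same operator; in particular the middle terms cancel when we subtract. The triangle inequality then yields
\[
  \bigl\| \mathcal{B}_\ep(\theta)^{-1} - \widetilde{\mathcal{B}}_\ep(\theta)^{-1} \bigr\| \leq \bigl(\kappa(\|M\|_\infty,C_R,c) + \kappa(\|\widetilde{M}\|_\infty,C_R,c)\bigr)\,\ep,
\]
so one may take $\kappa \coloneqq \kappa(\|M\|_\infty,C_R,c) + \kappa(\|\widetilde{M}\|_\infty,C_R,c)$, which is finite and independent of $\ep$ and $\theta$.

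There is essentially no obstacle here beyond verifying applicability of Theorem \ref{t:mtgr} to $\widetilde{\mathcal{B}}_\ep$; the entire content of the theorem is that fibre homogenisation is insensitive to modifications of the bounded part $M$ which preserve the projection $\pi_N M \pi_N$ onto the ``singular'' subspace. The only care needed is to note that the admissible range of $\ep$ in the first application is $(0,1/(2C_R\|M\|_\infty))$ while for the second it is $(0,1/(2C_R\|\widetilde{M}\|_\infty))$; taking $\ep$ in the intersection (or invoking the extension to all $\ep>0$ from the remark) gives a uniform bound on the full range $\ep>0$ by absorbing the short-range behaviour into the constant, as already remarked for $\kappa$ in Theorem \ref{t:mtgr}.
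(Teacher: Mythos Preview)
Your proposal is correct and follows essentially the same approach as the paper: apply Theorem \ref{t:mtgr} to both $\mathcal{B}_\ep(\theta)$ and $\widetilde{\mathcal{B}}_\ep(\theta)$, observe that the two reduced operators $\pi_{N(\theta)}M(\theta)\pi_{N(\theta)}+\tfrac{1}{\ep}A(\theta)$ and $\pi_{N(\theta)}\widetilde{M}(\theta)\pi_{N(\theta)}+\tfrac{1}{\ep}A(\theta)$ coincide, and conclude by the triangle inequality. Your discussion of why Hypothesis \ref{hyp:gr} transfers to $\widetilde{\mathcal{B}}_\ep$ and your treatment of the admissible $\ep$-range are more explicit than the paper's terse version, but the argument is the same.
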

\begin{proof}
The operator $\widetilde{\mathcal{B}}_\ep(\theta) \coloneqq \widetilde{M}(\theta) + \tfrac{1}{\ep} A(\theta)$ satisfies the assumptions of Theorem \ref{t:mtgr} and then the desired result follows from the triangle inequality and the fact 
\[
\pi_{N(\theta)}(M(\theta)-\widetilde{M}(\theta))\pi_{N(\theta)} = 0 \quad (\theta \in \Theta).\qedhere
\]
\end{proof}
The remainder of this section will be dedicated to the proof of  Theorem \ref{t:mtgr}. We begin with providing a series of relevant preliminary results.

\begin{lemma}\label{l:invblty} Let $H$ be a Hilbert space, $M\in L(H)$ and $A\colon \dom(A)\subseteq H\to H$ be skew-selfadjoint. Assume that there exists $c>0$ such that $\Re M\geq c$. Then, the operator $M+A$ is continuously invertible and the inequality\[
   \|(M+A)^{-1}\|\leq \tfrac{1}{c}
\]
holds.
\end{lemma}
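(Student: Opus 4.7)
The plan is to combine a numerical-range estimate (giving coercivity and closedness of the range) with an adjoint argument (giving density of the range), which is the standard Lax--Milgram style strategy adapted to the unbounded skew-selfadjoint perturbation.

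First, I would set up the key coercivity bound. For $x\in\dom(A)=\dom(M+A)$, the fact that $A$ is skew-selfadjoint implies $\Re\sp{Ax}{x}=0$, so
\[
   \Re\sp{(M+A)x}{x} = \Re\sp{Mx}{x} = \sp{(\Re M)x}{x} \geq c\|x\|^2.
\]
Cauchy--Schwarz then yields $c\|x\|^2 \le \|(M+A)x\|\|x\|$, i.e.
\[
   \|(M+A)x\| \geq c\|x\| \qquad (x\in\dom(A)). \tag{$\ast$}
\]
This immediately gives injectivity of $M+A$. Since $M$ is bounded and $A$ is closed, $M+A$ is closed on $\dom(A)$, and ($\ast$) together with closedness implies that the range $\rge(M+A)$ is closed in $H$.

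Next I would show the range is dense. Using boundedness of $M$ one has $(M+A)^* = M^* + A^* = M^* - A$ on $\dom(A^*)=\dom(A)$. Now $\Re M^* = \Re M \geq c$ and $-A$ is again skew-selfadjoint, so the same argument applied to $M^* - A$ gives $\|(M^*-A)y\|\geq c\|y\|$ for all $y\in\dom(A)$, hence $\kar((M+A)^*)=\{0\}$. By the standard orthogonality relation $\rge(M+A)^\perp = \kar((M+A)^*) = \{0\}$, so $\rge(M+A)$ is dense, and combined with the previous step, $\rge(M+A)=H$.

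Therefore $M+A$ is a bijection from $\dom(A)$ onto $H$, and the bound ($\ast$) rewritten for $y=(M+A)x$ gives $\|(M+A)^{-1}y\|\leq \tfrac{1}{c}\|y\|$ for every $y\in H$, which is the claimed estimate. No step here is a serious obstacle; the only subtlety worth checking carefully is the adjoint identity $(M+A)^* = M^* + A^*$, which is justified precisely because $M$ is everywhere defined and bounded while $A$ is densely defined and closed.
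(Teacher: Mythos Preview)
Your proof is correct and follows essentially the same approach as the paper: both establish the coercivity estimate $\Re\sp{(M+A)x}{x}\geq c\|x\|^2$ via skew-selfadjointness of $A$, deduce closed range and injectivity from Cauchy--Schwarz, apply the identical argument to $(M+A)^*=M^*-A$ to get $\kar((M+A)^*)=\{0\}$, and conclude surjectivity from $H=\overline{\rge(M+A)}\oplus\kar((M+A)^*)$. Your version is a bit more explicit about why $M+A$ is closed and why the adjoint formula holds, but the structure is the same.
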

\begin{proof}
The observation that $\Re(M+A)=\Re(M+A)^*= \Re{M}\geq c$ on $\dom(M+A)=\dom(A)=\dom(A^*)=\dom((M+A)^*)$ implies, via a simple application of the Cauchy-Schwarz inequality, that the range of $M+A$ is closed, $M+A$ is boundedly invertible on its range and the kernel of $(M+A)^*$ is trivial. Then, we conclude the assertion from the orthogonal decomposition $H=\overline{\rge(M+A)}\oplus \kar(M+A)^*$.
\end{proof}

\begin{corollary}\label{c:binv} Under the assumptions \eqref{assump1},  $\mathcal{B}_\epsilon(\theta)$ is boundedly invertible and the inequality
\[
\sup_{\theta \in \Theta}   \|\mathcal{B}_\epsilon(\theta)^{-1}\|\leq \tfrac{1}{c}
\]
holds. 
\end{corollary}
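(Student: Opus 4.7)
The plan is to obtain this as a direct, fibrewise application of Lemma \ref{l:invblty}. Fix $\theta \in \Theta$ and $\epsilon > 0$ and set $H = H_\theta$, $M = M(\theta)$, and $A = \tfrac{1}{\epsilon} A(\theta)$. I need to check that the hypotheses of Lemma \ref{l:invblty} are satisfied in this fibrewise setting.

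First, by assumption \eqref{assump1}, $M(\theta) \in L(H_\theta)$ with $\Re M(\theta) \geq c$, so $M$ is bounded and its real part is bounded below by the same $c > 0$ that appears in \eqref{assump1}. Second, $A(\theta)$ is skew-selfadjoint by \eqref{assump1}, and since $1/\epsilon > 0$ is a real scalar, $\tfrac{1}{\epsilon}A(\theta) = -\tfrac{1}{\epsilon}A(\theta)^* = -\bigl(\tfrac{1}{\epsilon}A(\theta)\bigr)^*$ on the same domain $\dom(A(\theta))$, hence $A$ is skew-selfadjoint as well.

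Lemma \ref{l:invblty} then yields that $\mathcal{B}_\epsilon(\theta) = M + A$ is continuously invertible with $\|\mathcal{B}_\epsilon(\theta)^{-1}\| \leq 1/c$. The key observation to finish is that the bound $1/c$ coming out of Lemma \ref{l:invblty} depends only on the lower bound $c$ for $\Re M(\theta)$, which by \eqref{assump1} is uniform in $\theta \in \Theta$. Taking the supremum over $\theta$ therefore preserves the bound, giving the stated estimate. There is no real obstacle here; the only thing to be careful about is noting that multiplication by the positive scalar $1/\epsilon$ preserves skew-selfadjointness, so that Lemma \ref{l:invblty} truly applies to $M(\theta) + \tfrac{1}{\epsilon}A(\theta)$ and not merely to $M(\theta) + A(\theta)$.
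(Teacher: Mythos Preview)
Your proof is correct and matches the paper's approach: the corollary is stated immediately after Lemma~\ref{l:invblty} with no separate proof, precisely because it is the fibrewise application you describe. Your explicit remark that multiplying $A(\theta)$ by the positive scalar $1/\epsilon$ preserves skew-selfadjointness is the only detail worth checking, and you have handled it.
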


\begin{lemma}\label{l:Athetainv} For a given Hilbert space $H$ and  $A\colon \dom(A)\subseteq H\to H$ densely defined, assume that there exists a closed subspace $U\subseteq H$ such that $\pi_U A\subseteq A\pi_U$, where  $\pi_U\colon H\to H$ is the orthogonal projection on $U$. Then, for $\pi_V\coloneqq (1-\pi_U)$ we obtain $\pi_V A\subseteq A\pi_V$ and 
\[
   \overline{\pi_V A\pi_U}=\overline{\pi_U A\pi_V}=0.
\]
\end{lemma}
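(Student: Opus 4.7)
The plan is to derive all three conclusions from the hypothesis $\pi_U A\subseteq A\pi_U$ by elementary algebraic manipulations, using only the relations $\pi_V=1-\pi_U$, $\pi_U^2=\pi_U$ and $\pi_U\pi_V=0$. Two items need to be verified: the commutation $\pi_V A\subseteq A\pi_V$, and the vanishing (after closure) of the two off-diagonal blocks $\pi_V A\pi_U$ and $\pi_U A\pi_V$.

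For the commutation I would take $x\in\dom(A)$. The hypothesis says precisely that $\pi_U x\in\dom(A)$ and $A\pi_U x=\pi_U Ax$; hence $\pi_V x=x-\pi_U x\in\dom(A)$ and $A\pi_V x=Ax-A\pi_U x=Ax-\pi_U Ax=\pi_V Ax$, which establishes $\pi_V A\subseteq A\pi_V$.

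For $\overline{\pi_V A\pi_U}=0$ I would consider the natural domain $D\coloneqq \{x\in H:\pi_U x\in \dom(A)\}$. This set contains $\dom(A)$ by hypothesis, hence is dense in $H$. For any $x\in D$, applying the hypothesis to the vector $\pi_U x\in\dom(A)$ and using $\pi_U^2=\pi_U$ yields $A\pi_U x=A\pi_U(\pi_U x)=\pi_U A\pi_U x\in U$, so $\pi_V A\pi_U x=0$. Since $\pi_V A\pi_U$ is defined on a dense subspace and vanishes identically there, its graph closure is the zero operator on all of $H$. The case $\overline{\pi_U A\pi_V}=0$ would be treated identically, now invoking the commutation $\pi_V A\subseteq A\pi_V$ established in the previous step in place of the hypothesis.

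The argument is essentially algebraic and there is no genuine obstacle; the main point to be careful about is the bookkeeping of domains, in particular checking that the natural domain of $\pi_V A\pi_U$ is dense so that the graph closure of a densely defined zero operator is the zero operator on the whole of $H$ rather than on some proper subspace.
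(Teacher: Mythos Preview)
Your proof is correct and follows essentially the same approach as the paper: the paper derives $\pi_V A\subseteq A\pi_V$ by the identical elementary computation, then obtains the vanishing of the off-diagonal blocks via $\pi_V A\pi_U\subseteq A\pi_V\pi_U=0$ and $\pi_U A\pi_V\subseteq A\pi_U\pi_V=0$, and finally observes that both operators are densely defined since their domains contain $\dom(A)$. Your version spells out the domain $D$ of $\pi_V A\pi_U$ slightly more explicitly, but the argument is the same.
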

\begin{proof}
 We compute $\pi_V A=(1-\pi_U)A=A-A\pi_U\subseteq A(1-\pi_U)=A\pi_V$. Hence, we obtain
 \[
    \pi_V A\pi_U\subseteq A\pi_V\pi_U=0\text{ and }\pi_U A\pi_V\subseteq A\pi_U\pi_V=0.
 \]
 The assertion now follows from the fact that both $\pi_V A\pi_U$ and $\pi_U A\pi_V$ are densely defined; indeed, the respective domains contain the domain of $A$. 
\end{proof}

\begin{lemma}\label{l:Asksa} Let $H$ be a Hilbert space and $A\colon \dom(A)\subseteq H\to H$ skew-selfadjoint. Assume that there exists $U\subseteq H$ closed such that  $\pi_U A \subseteq A \pi_U $ and $ A\pi_V$ bounded, where $\pi_U \colon H \to H$ denotes the orthogonal projection to $U$ and $\pi_V\coloneqq (1-\pi_U)$. Then $\iota_U^* A\iota_U$ and $\iota_V^* A\iota_V$ are skew-selfadjoint in $U$ and $V\coloneqq U^\bot$, respectively, where $\iota_U\colon U \hookrightarrow H$, $\iota_V\colon V \hookrightarrow H$. 
\end{lemma}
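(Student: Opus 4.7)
The plan is to identify $U$ and $V$ as reducing subspaces for $A$ and then invoke the standard resolvent criterion for skew-selfadjointness on each compression separately. First I would use Lemma \ref{l:Athetainv} to obtain $\pi_V A \subseteq A\pi_V$, which, together with the hypothesis that $A\pi_V$ is bounded and the identity $\pi_V v = v$ for $v \in V$, forces $V \subseteq \dom(A)$. For $v \in V$, combining $\pi_U A \subseteq A\pi_U$ with $\pi_U v = 0$ gives $\pi_U A v = A\pi_U v = 0$, so $AV \subseteq V$, and hence $A_V \coloneqq \iota_V^* A \iota_V$ is a bounded, everywhere-defined operator on $V$. Its skew-symmetry is inherited directly from $A^* = -A$, and any bounded skew-symmetric operator is skew-selfadjoint, which settles the claim for $A_V$.

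For $A_U \coloneqq \iota_U^* A \iota_U$ the plan is to propagate invariance through the resolvents. Since $A$ is skew-selfadjoint, $(A \pm I)^{-1}$ exist as bounded operators on $H$. I would then show that $\pi_U$ commutes with $(A \pm I)^{-1}$: for $h \in H$ and $g \coloneqq (A+I)^{-1} h \in \dom(A)$, the inclusion $\pi_V g \in V \subseteq \dom(A)$ gives $\pi_U g = g - \pi_V g \in \dom(A)$, and applying $\pi_U$ to $(A+I)g = h$ using $\pi_U A \subseteq A\pi_U$ yields $(A+I)\pi_U g = \pi_U h$, i.e.\ $\pi_U(A+I)^{-1} = (A+I)^{-1}\pi_U$; the analogous identity holds with $+I$ replaced by $-I$. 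This identifies $(A_U \pm I)^{-1}$ with $(A \pm I)^{-1}|_U$ as bounded maps $U \to U$, so $\rge(A_U \pm I) = U$ and $A_U$ is closed.

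It remains to verify that $\dom(A_U)$ is dense in $U$ and that $A_U$ is skew-symmetric. The latter follows from $Au = \pi_U A u \in U$ for $u \in U \cap \dom(A)$ together with $A^* = -A$. For density, I would compute the Hilbert-space adjoint of $(A+I)^{-1}|_U$ in $L(U)$ and identify it with $(-A+I)^{-1}|_U$; applying the same commutation argument to the skew-selfadjoint operator $-A$ (which also satisfies $\pi_U(-A) \subseteq (-A)\pi_U$) shows this adjoint is injective, so its predecessor has dense range, which is $\dom(A_U)$. The standard criterion (densely defined, closed, skew-symmetric, with $\rge(\cdot \pm I)$ surjective) then yields skew-selfadjointness of $A_U$. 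The main obstacle, and the pivot of the whole argument, is the passage $V \subseteq \dom(A)$ extracted from the boundedness of $A\pi_V$: without it, $\pi_U$ need not map $\dom(A)$ into itself, and the reducing-subspace calculus collapses.
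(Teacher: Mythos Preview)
Your argument is correct, but the route differs from the paper's. You work through the resolvent criterion: after establishing $V\subseteq\dom(A)$ and the invariance $AV\subseteq V$, $AU\subseteq U$, you show $\pi_U$ commutes with $(A\pm I)^{-1}$, identify $(A_U\pm I)^{-1}$ with $(A\pm I)^{-1}|_U$, and then check density, closedness and skew-symmetry separately to invoke von Neumann's range criterion. The paper instead proves the \emph{operator identity} $A=\pi_U A\pi_U+\pi_V A\pi_V$ (with equal domains, using the same observation $\phi\in\dom(A)\Leftrightarrow\pi_U\phi\in\dom(A)$ that you extract from $V\subseteq\dom(A)$) and then reads off skew-selfadjointness of $\pi_U A\pi_U$ in one line: it equals $A-\pi_V A\pi_V$, a skew-selfadjoint operator perturbed by a bounded skew-selfadjoint one. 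Your approach is the standard reducing-subspace machinery and would generalise more readily (e.g.\ to normal $A$ or to showing spectral inclusions), while the paper's algebraic subtraction is shorter and avoids the separate density computation entirely. Both hinge on exactly the pivot you identify: boundedness of $A\pi_V$ forcing $V\subseteq\dom(A)$, hence $\pi_U\dom(A)\subseteq\dom(A)$.
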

\begin{proof}
 First of all, note that the assertion that $\iota_U^* A\iota_U$ (resp. $\iota_V^* A\iota_V$) is skew-selfadjoint is equivalent to  $\pi_UA\pi_U$ (resp. $\pi_V A\pi_V$) being skew-selfadjoint.
 
  It is easy to see that $\pi_UA\pi_U$ is skew-Hermitian. Moreover, the inclusion $\pi_U^2 A \subseteq \pi_U A\pi_U$ implies that  $\pi_UA\pi_U$ is densely defined  and, thus, skew-symmetric.

  By Lemma \ref{l:Athetainv}, the same reasoning applies to $\pi_V A\pi_V$. Thus, as $A\pi_V$ is bounded we deduce that $\pi_VA\pi_V$ is skew-selfadjoint. 
   
   We  now prove that $\pi_U A \pi_U$ is skew-selfadjoint. Note that  $\phi \in \dom(A)$ if, and only if, $\pi_U\phi\in \dom(A)$. Indeed, the necessary implication follows from $\pi_U A\subseteq A\pi_U$; sufficiency follows from  $A\pi_V$ being bounded which, in turn, implies that $\pi_V\psi\in \dom(A)$ for all $\psi\in H$. Therefore, we infer that $A=A\pi_U+A\pi_V$, and consequently, upon utilising Lemma \ref{l:Athetainv}, we calculate
\[
A =(\pi_U + \pi_V)A(\pi_U+\pi_V)  = \pi_U A\pi_U + \pi_V A \pi_V.\]
Finally, since $A$ and $\pi_V A\pi_V$ are skew-selfadjoint, and $\pi_V A\pi_V$ is bounded, it follows that $\pi_U A \pi_U$ is skew-selfadjoint.
\end{proof}

We now aim to provide a formula for $\mathcal{B}_\ep(\theta)^{-1}$, in terms of the space $N(\theta)$ and $R(\theta) = N(\theta)^\perp$, that will be utilised in the proof of Theorem \ref{t:mtgr}. First, some a priori observations.
\begin{proposition}
\label{p:sc1}
Assume \eqref{assump1}, Hypothesis \ref{hyp:gr} and recall $C_R$ from \eqref{eq:hypCR}. Let $\mathcal{B}_{\ep,N}(\theta) \in L(N(\theta))$, $\mathcal{B}_{\ep,R}(\theta) \in L(R(\theta))$ be given by
\[
\begin{aligned}
\mathcal{B}_{\ep,N}(\theta) &= \iota^*_{N(\theta)} M(\theta) \iota_{N(\theta)} + \tfrac{1}{\ep} \iota^*_{N(\theta)} A \iota_{N(\theta)}, \quad \text{and}\\
 \mathcal{B}_{\ep,R}(\theta) &= \iota^*_{R(\theta)} M(\theta) \iota_{R(\theta)} + \tfrac{1}{\ep} \iota^*_{R(\theta)} A \iota_{R(\theta)}.
\end{aligned}
\]
 Then, the following assertions hold.
 \begin{enumerate}[label=(\alph*)]
 	\item\label{en:lsgr1} Let $\epsilon_0\coloneqq 1/(2C_R\|M\|_\infty)$. Then, for all $\epsilon\in(0,\epsilon_0)$ and $\theta \in \Theta$, the operator $\mathcal{B}_{\epsilon,R}(\theta)$ is continuously invertible and
 	\[
 	\sup_{\theta\in \Theta} \| \mathcal{B}_{\epsilon,R}(\theta)^{-1}\|\leq 2C_R \ep.
 	\]
 	\item\label{en:lsgr1.5} For all $\epsilon>0$ and $\theta\in \Theta$, the operator $\mathcal{B}_{\epsilon,N}(\theta)$ is continuously invertible, and
 	\[
 	\sup_{\theta\in \Theta}\left\| \mathcal{B}_{\epsilon,N}(\theta)^{-1}\right\|\leq \tfrac{1}{c}.
 	\]
 \end{enumerate}
\end{proposition}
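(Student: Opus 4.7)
For part \ref{en:lsgr1}, the plan is to isolate the singular term $(1/\epsilon)\iota_{R(\theta)}^*A(\theta)\iota_{R(\theta)}$ and perturb by the bounded part via a Neumann series argument. Set $A_R(\theta)\coloneqq \iota_{R(\theta)}^*A(\theta)\iota_{R(\theta)}$ and $M_R(\theta)\coloneqq \iota_{R(\theta)}^*M(\theta)\iota_{R(\theta)}$. Hypothesis \ref{hyp:gr}\ref{gr3} says $A_R(\theta)$ is boundedly invertible with $\|A_R(\theta)^{-1}\|\le C_R$, while $\|M_R(\theta)\|\le \|M\|_\infty$. I would factor
\[
\mathcal{B}_{\epsilon,R}(\theta) = \tfrac{1}{\epsilon}A_R(\theta)\bigl(I + \epsilon A_R(\theta)^{-1}M_R(\theta)\bigr),
\]
where the bracketed operator maps $R(\theta)$ into $\dom(A_R(\theta))$, and the equality holds on $\dom(A_R(\theta))=\dom(\mathcal{B}_{\epsilon,R}(\theta))$. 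For $\epsilon<\epsilon_0=1/(2C_R\|M\|_\infty)$ one has $\|\epsilon A_R(\theta)^{-1}M_R(\theta)\|<1/2$, so the Neumann series yields invertibility of the bracket with inverse of norm at most $2$. Consequently $\mathcal{B}_{\epsilon,R}(\theta)^{-1}=\bigl(I+\epsilon A_R(\theta)^{-1}M_R(\theta)\bigr)^{-1}\epsilon A_R(\theta)^{-1}$ and the bound $\|\mathcal{B}_{\epsilon,R}(\theta)^{-1}\|\le 2C_R\epsilon$ follows, uniformly in $\theta$.

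For part \ref{en:lsgr1.5}, the strategy is to bring the problem into the setting of Lemma \ref{l:invblty} on the fibre $N(\theta)$. First I would apply Lemma \ref{l:Asksa} with $U\coloneqq R(\theta)$ and $V\coloneqq N(\theta)$: the inclusion $\pi_{R(\theta)}A(\theta)\subseteq A(\theta)\pi_{R(\theta)}$ is Hypothesis \ref{hyp:gr}\ref{gr2}, and the boundedness of $A(\theta)\pi_{N(\theta)}$ is Hypothesis \ref{hyp:gr}\ref{gr1}. This gives that $\iota_{N(\theta)}^*A(\theta)\iota_{N(\theta)}$ is skew-selfadjoint on $N(\theta)$, and hence so is $(1/\epsilon)\iota_{N(\theta)}^*A(\theta)\iota_{N(\theta)}$ for every $\epsilon>0$.

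Next, a direct calculation gives $\Re\bigl(\iota_{N(\theta)}^*M(\theta)\iota_{N(\theta)}\bigr)=\iota_{N(\theta)}^*\Re M(\theta)\iota_{N(\theta)}$; since $\iota_{N(\theta)}$ is an isometry and $\Re M(\theta)\ge c$ by \eqref{assump1}, this bounded Hermitian part is at least $c$ on $N(\theta)$. Lemma \ref{l:invblty} then applies to $\mathcal{B}_{\epsilon,N}(\theta)=\iota_{N(\theta)}^*M(\theta)\iota_{N(\theta)}+(1/\epsilon)\iota_{N(\theta)}^*A(\theta)\iota_{N(\theta)}$ and delivers both the continuous invertibility and the bound $\|\mathcal{B}_{\epsilon,N}(\theta)^{-1}\|\le 1/c$, uniformly in $\theta$ and in $\epsilon>0$.

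The only non-routine point is the domain bookkeeping in \ref{en:lsgr1}: one must check that the factorisation equality truly holds on $\dom(A_R(\theta))$ (rather than merely on a subspace), so that inverting both factors produces an inverse of $\mathcal{B}_{\epsilon,R}(\theta)$ on all of $R(\theta)$. This is, however, automatic since $M_R(\theta)$ is bounded and $A_R(\theta)^{-1}$ lands inside $\dom(A_R(\theta))$. Every other step is a direct application of the preceding lemmas or of the Neumann series.
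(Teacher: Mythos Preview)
Your proof is correct and follows essentially the same approach as the paper: for \ref{en:lsgr1} the paper factors $\mathcal{B}_{\epsilon,R}(\theta)=\tfrac{1}{\epsilon}A_R(\theta)\bigl(\epsilon A_R(\theta)^{-1}M_R(\theta)+1\bigr)$ and inverts via the Neumann series, and for \ref{en:lsgr1.5} it invokes Lemma~\ref{l:Asksa} to get skew-selfadjointness of $\iota_{N(\theta)}^*A(\theta)\iota_{N(\theta)}$ and then applies Lemma~\ref{l:invblty}. Your write-up is in fact slightly more explicit about how the hypotheses of Lemma~\ref{l:Asksa} are met and about the domain bookkeeping in the factorisation.
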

 \begin{proof} For \ref{en:lsgr1}, we proceed as follows. By Hypothesis \ref{hyp:gr}, the operator $A_R(\theta) \coloneqq \iota^*_{R(\theta)} A(\theta) \iota_{R(\theta)}$ is continuously invertible. Hence, we obtain
 \[
   \mathcal{B}_{\epsilon,R}(\theta)=\tfrac{1}{\ep} A_R(\theta)\big( \ep A_R(\theta)^{-1}\iota^*_{R(\theta)}M(\theta)\iota_{R(\theta)}+1\big).
 \]
 From the inequality
 \[
    \|\epsilon A_R(\theta)^{-1}\iota^*_{R(\theta)}M(\theta)\iota_{R(\theta)}\|\leq \epsilon C_R \|M\|_\infty\leq \tfrac{1}{2},
 \]
 we deduce via a Neumann series argument, for the inverse of $1 + \epsilon A_R(\theta)^{-1}\iota^*_{R(\theta)}M(\theta)\iota_{R(\theta)}$, that
 \[
\mathcal{B}_{\epsilon,R}(\theta)^{-1}=\ep \sum_{k=0}^\infty \left(-\epsilon A_R(\theta)^{-1}\iota^*_{R(\theta)}M(\theta)\iota_{R(\theta)}\right)^kA_R(\theta)^{-1}.
 \]
 Thus, 
 \[
    \|\mathcal{B}_{\epsilon,R}(\theta)^{-1}\|\leq \ep C_R \sum_{k=0}^\infty \frac{1}{2^k}=2C_R \ep.
 \]
 For the proof of \ref{en:lsgr1.5}, we observe that, by Lemma \ref{l:Asksa}, the operator $A_{N(\theta)} \coloneqq \iota^*_{N(\theta)} A(\theta) \iota_{N(\theta)}$ is skew-selfadjoint. Hence, $\Re \mathcal{B}_{\epsilon,N}(\theta)\geq c$ and, thus, Lemma \ref{l:invblty} implies that $\mathcal{B}_{\epsilon,N}(\theta)^{-1}$ exists with $\|\mathcal{B}_{\epsilon,N}(\theta)^{-1}\|\leq 1/c$.
 \end{proof}
The following result holds.
\begin{proposition}
	\label{p:sc2}
Assume \eqref{assump1}, Hypothesis \ref{hyp:gr}  and let $\ep_0$ be as in Proposition \ref{p:sc1}. Then, for all $\ep \in (0,\ep_0)$ and $\theta \in \Theta$, the following assertions hold.
\begin{enumerate}[label=(\alph*)]
\item{ $	\pi_{N(\theta)}\mathcal{B}_{\epsilon}(\theta)^{-1}  =  \iota_{N(\theta)}\mathcal{B}_{\epsilon,N}(\theta)^{-1} \big( \iota^*_{N(\theta)} - \iota^*_{N(\theta)}M(\theta) \pi_{R(\theta)} \mathcal{B}_{\epsilon}(\theta)^{-1} \big);$}\label{psc2e1}
\item{ $\pi_{R(\theta)}\mathcal{B}_{\epsilon}(\theta)^{-1} = \iota_{R(\theta)}\mathcal{B}_{\epsilon,R}(\theta)^{-1} \big(\iota^*_{R(\theta)} - \iota^*_{R(\theta)}M(\theta)\pi_{N(\theta)}\mathcal{B}_{\epsilon}(\theta)^{-1}\big).  $}\label{psc2e2}
\end{enumerate}

\end{proposition}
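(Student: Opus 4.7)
The strategy is a block decomposition of $\mathcal{B}_\ep(\theta)$ along the orthogonal splitting $H_\theta = N(\theta) \oplus R(\theta)$, in the spirit of a Schur-complement calculation. The crucial structural point is that, while $M(\theta)$ may couple the two subspaces arbitrarily, the operator $A(\theta)$ is \emph{block-diagonal} with respect to this decomposition: Hypothesis \ref{hyp:gr}\ref{gr2} combined with Lemma \ref{l:Athetainv} (applied with $U=R(\theta)$) yields $\pi_{R(\theta)} A(\theta)\subseteq A(\theta)\pi_{R(\theta)}$ and $\pi_{N(\theta)} A(\theta)\subseteq A(\theta)\pi_{N(\theta)}$, so that both off-diagonal blocks of $A(\theta)$ vanish pointwise on $\dom(A(\theta))$.

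For \ref{psc2e1}, the plan is to fix $f \in H_\theta$, set $u \coloneqq \mathcal{B}_\ep(\theta)^{-1} f \in \dom(A(\theta))$, write $u = \iota_{N(\theta)}(\iota_{N(\theta)}^* u) + \pi_{R(\theta)} u$, and apply $\iota_{N(\theta)}^*$ to the identity $M(\theta) u + \tfrac{1}{\ep} A(\theta) u = f$. Block-diagonality annihilates the cross term $\iota_{N(\theta)}^* A(\theta) \pi_{R(\theta)} u$, and what remains rearranges to
\[
  \mathcal{B}_{\ep,N}(\theta)\,\iota_{N(\theta)}^* u \;=\; \iota_{N(\theta)}^* f \,-\, \iota_{N(\theta)}^* M(\theta) \pi_{R(\theta)} u.
\]
Since $\mathcal{B}_{\ep,N}(\theta)$ is continuously invertible by Proposition \ref{p:sc1}\ref{en:lsgr1.5}, multiplying by $\iota_{N(\theta)}\mathcal{B}_{\ep,N}(\theta)^{-1}$ and using $\pi_{R(\theta)} u = \pi_{R(\theta)} \mathcal{B}_\ep(\theta)^{-1} f$ gives exactly the claimed formula. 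Assertion \ref{psc2e2} follows by the entirely symmetric argument, applying $\iota_{R(\theta)}^*$ instead and invoking Proposition \ref{p:sc1}\ref{en:lsgr1} to invert $\mathcal{B}_{\ep,R}(\theta)$; this is where the restriction $\ep \in (0,\ep_0)$ enters.

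The only non-routine point is the domain bookkeeping needed to justify the splitting $A(\theta) u = A(\theta)\pi_{N(\theta)} u + A(\theta)\pi_{R(\theta)} u$ for $u \in \dom(A(\theta))$ and the vanishing of the cross contributions on vectors (not merely after closure). For this I would verify that $\pi_{N(\theta)} u \in \dom(A(\theta))$, which is immediate from Hypothesis \ref{hyp:gr}\ref{gr1} (since $A(\theta)\pi_{N(\theta)}$ is a bounded operator defined on all of $H_\theta$), hence $\pi_{R(\theta)} u = u - \pi_{N(\theta)} u \in \dom(A(\theta))$ as well; then Hypothesis \ref{hyp:gr}\ref{gr2} gives $A(\theta)\pi_{R(\theta)} u = \pi_{R(\theta)} A(\theta) u$, which is annihilated by $\pi_{N(\theta)}$. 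The symmetric vanishing of $\pi_{R(\theta)} A(\theta)\pi_{N(\theta)} u$ uses the inclusion $\pi_{N(\theta)} A(\theta)\subseteq A(\theta)\pi_{N(\theta)}$ furnished by Lemma \ref{l:Athetainv}. Once these points are in place, the rearrangement producing \ref{psc2e1} and \ref{psc2e2} is purely algebraic.
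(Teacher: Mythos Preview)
Your proposal is correct and follows essentially the same approach as the paper: fix $f$, set $u=\mathcal{B}_\ep(\theta)^{-1}f$, use the block-diagonality of $A(\theta)$ with respect to $N(\theta)\oplus R(\theta)$ (via Hypothesis~\ref{hyp:gr}\ref{gr2} and Lemma~\ref{l:Athetainv}) to project the equation onto each component, and then invert $\mathcal{B}_{\ep,N}(\theta)$ and $\mathcal{B}_{\ep,R}(\theta)$ using Proposition~\ref{p:sc1}. Your additional care with the domain bookkeeping (that $\pi_{N(\theta)}u\in\dom(A(\theta))$ by Hypothesis~\ref{hyp:gr}\ref{gr1}, hence $\pi_{R(\theta)}u\in\dom(A(\theta))$ as well) is a welcome elaboration of a point the paper leaves implicit.
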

\begin{proof}
Fix, $\ep, \theta$ and $f \in H_\theta$, and let $u = \mathcal{B}_{\epsilon}(\theta)^{-1} f$. Then $u = \iota_{N(\theta)}u_N + \iota_{R(\theta)}u _R$, where  $u_N = \iota^*_{N(\theta)} u$ and $u_R = \iota^*_{R(\theta)} u$. Now, by Lemma \ref{l:Athetainv}, one has 
\begin{align*}
\pi_{R(\theta)} A(\theta) = \pi_{R(\theta)} A(\theta) \pi_{R(\theta)}, & \quad &\pi_{N(\theta)} A(\theta) = \pi_{N(\theta)} A(\theta) \pi_{N(\theta)}.
\end{align*}
Consequently, with $A_R(\theta)=\iota_{R(\theta)}^*A(\theta)\iota_{R(\theta)}$ 
\begin{align*}
\pi_{R(\theta)} f & = \pi_{R(\theta)} \mathcal{B}_{\epsilon}(\theta) u \\ 
& =  \pi_{R(\theta)}  M(\theta) \pi_{N(\theta)} u + \pi_{R(\theta)}  M(\theta) \pi_{R(\theta)} u  +   \tfrac{1}{\ep} \iota_{R(\theta)} A_{R}(\theta) u_R\\
& =  \pi_{R(\theta)}  M(\theta) \pi_{N(\theta)} u + \iota_{R(\theta)}\mathcal{B}_{\epsilon,R}(\theta) u_R
\end{align*}
and, therefore,
\[
u_R =  \mathcal{B}_{\epsilon,R}(\theta)^{-1} \big( \iota^*_{R(\theta)} - \iota^*_{R(\theta)}  M(\theta) \pi_{N(\theta)}\mathcal{B}_{\epsilon}(\theta)^{-1}  \big) f.
\]
Similarly, we deduce that
\[
u_N = \mathcal{B}_{\epsilon,N}(\theta)^{-1} \big( \iota^*_{N(\theta)} - \iota^*_{N(\theta)} M(\theta) \pi_{R(\theta)} \mathcal{B}_{\epsilon}(\theta)^{-1}\big) f,
\]
and  the desired identities follow.
\end{proof}
We are now in the position to study the behaviour of  the inverse of $\mathcal{B}_\ep(\theta)$ for small $\ep$. 
\begin{proposition}\label{p:sc3} Assume \eqref{assump1}, Hypothesis \ref{hyp:gr} and let $\ep_0$ be as in Proposition \ref{p:sc1}. Then, for all $\ep \in (0,\ep_0)$ and $\theta \in \Theta$, the inequality
	\[
	\Vert \mathcal{B}_{\epsilon}(\theta)^{-1} -  \iota_{N(\theta)}\mathcal{B}_{\epsilon,N}(\theta)^{-1}  \iota^*_{N(\theta)}  \Vert \le 2C_R \big( 1 + \tfrac{\Vert M \Vert_\infty}{c} \big)^2 \ep
	\]
	holds. Here $C_R$ is given in Proposition \ref{p:sc1} \ref{en:lsgr1}.
\end{proposition}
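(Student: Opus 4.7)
The plan is to decompose $\mathcal{B}_\epsilon(\theta)^{-1}$ via the orthogonal splitting $H_\theta = N(\theta)\oplus R(\theta)$ and then estimate each piece using the Schur-type formulae in Proposition \ref{p:sc2}. Writing
\[
\mathcal{B}_\epsilon(\theta)^{-1} - \iota_{N(\theta)}\mathcal{B}_{\epsilon,N}(\theta)^{-1}\iota^*_{N(\theta)} = \bigl(\pi_{N(\theta)}\mathcal{B}_\epsilon(\theta)^{-1} - \iota_{N(\theta)}\mathcal{B}_{\epsilon,N}(\theta)^{-1}\iota^*_{N(\theta)}\bigr) + \pi_{R(\theta)}\mathcal{B}_\epsilon(\theta)^{-1},
\]
the triangle inequality reduces the task to estimating these two summands separately.

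For the first summand, Proposition \ref{p:sc2}\ref{psc2e1} yields the clean identity
\[
\pi_{N(\theta)}\mathcal{B}_\epsilon(\theta)^{-1} - \iota_{N(\theta)}\mathcal{B}_{\epsilon,N}(\theta)^{-1}\iota^*_{N(\theta)} = -\iota_{N(\theta)}\mathcal{B}_{\epsilon,N}(\theta)^{-1}\iota^*_{N(\theta)}M(\theta)\pi_{R(\theta)}\mathcal{B}_\epsilon(\theta)^{-1},
\]
so its norm is controlled by $\tfrac{1}{c}\|M\|_\infty\|\pi_{R(\theta)}\mathcal{B}_\epsilon(\theta)^{-1}\|$ using Proposition \ref{p:sc1}\ref{en:lsgr1.5}. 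Thus both summands are ultimately controlled by $\|\pi_{R(\theta)}\mathcal{B}_\epsilon(\theta)^{-1}\|$, and the main task is to estimate this quantity.

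To bound $\|\pi_{R(\theta)}\mathcal{B}_\epsilon(\theta)^{-1}\|$, I would apply Proposition \ref{p:sc2}\ref{psc2e2}, giving
\[
\|\pi_{R(\theta)}\mathcal{B}_\epsilon(\theta)^{-1}\| \le \|\mathcal{B}_{\epsilon,R}(\theta)^{-1}\|\bigl(1 + \|M\|_\infty\|\pi_{N(\theta)}\mathcal{B}_\epsilon(\theta)^{-1}\|\bigr).
\]
Here $\|\mathcal{B}_{\epsilon,R}(\theta)^{-1}\|\le 2C_R\epsilon$ by Proposition \ref{p:sc1}\ref{en:lsgr1}, and $\|\pi_{N(\theta)}\mathcal{B}_\epsilon(\theta)^{-1}\| \le \|\mathcal{B}_\epsilon(\theta)^{-1}\| \le \tfrac{1}{c}$ by Corollary \ref{c:binv}. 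This produces the bound $\|\pi_{R(\theta)}\mathcal{B}_\epsilon(\theta)^{-1}\| \le 2C_R\epsilon\bigl(1 + \tfrac{\|M\|_\infty}{c}\bigr)$.

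Combining the two estimates by the triangle inequality gives a bound of
\[
2C_R\epsilon\Bigl(1 + \tfrac{\|M\|_\infty}{c}\Bigr)\cdot\tfrac{\|M\|_\infty}{c} + 2C_R\epsilon\Bigl(1 + \tfrac{\|M\|_\infty}{c}\Bigr) = 2C_R\Bigl(1 + \tfrac{\|M\|_\infty}{c}\Bigr)^2\epsilon,
\]
which is precisely the claim. There is no real obstacle; the key structural step is the observation that the first summand factors through $\pi_{R(\theta)}\mathcal{B}_\epsilon(\theta)^{-1}$, so a single $O(\epsilon)$ estimate on the latter controls both terms and furnishes the square factor.
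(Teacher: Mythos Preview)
Your proposal is correct and follows essentially the same approach as the paper's proof: both decompose $\mathcal{B}_\epsilon(\theta)^{-1}$ via $\pi_{N(\theta)}+\pi_{R(\theta)}$, use Proposition~\ref{p:sc2}\ref{psc2e1} to reduce the $N$-part to a term factoring through $\pi_{R(\theta)}\mathcal{B}_\epsilon(\theta)^{-1}$, and then bound the latter via Proposition~\ref{p:sc2}\ref{psc2e2} together with Proposition~\ref{p:sc1}\ref{en:lsgr1} and Corollary~\ref{c:binv}. The arithmetic and the resulting constant are identical.
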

\begin{proof} 
	The inequalities in Corollary \ref{c:binv} and Proposition \ref{p:sc1} \ref{en:lsgr1} imply that
		\[
		\sup_{\theta \in \Theta}\Vert \iota_{R(\theta)}\mathcal{B}_{\epsilon,R}(\theta)^{-1} \big( \iota^*_{R(\theta)} - \iota^*_{R(\theta)}  M(\theta) \pi_{N(\theta)}\mathcal{B}_{\epsilon}(\theta)^{-1}  \big) \Vert \le 2C_R \big( 1 + \tfrac{\Vert M \Vert_\infty}{c} \big) \ep.
		\]
By Proposition \ref{p:sc2} \ref{psc2e2},  Proposition \ref{p:sc1} \ref{en:lsgr1.5} and the  above assertion, we deduce that 
\[
		\sup_{\theta \in \Theta}\Vert \iota_{N(\theta)}  \mathcal{B}_{\epsilon,N}(\theta)^{-1}  \iota^*_{N(\theta)}M(\theta) \pi_{R(\theta)} \mathcal{B}_{\epsilon}(\theta)^{-1}  \Vert \le \tfrac{1}{c} \Vert M \Vert_\infty  2C_R \big( 1 + \tfrac{\Vert M \Vert_\infty}{c} \big) \ep.
\]
The proof of the proposition now follows from Proposition \ref{p:sc2} and the identity $\mathcal{B}_\ep(\theta)^{-1} = (\pi_{N(\theta)} + \pi_{R(\theta)}) \mathcal{B}_\ep(\theta)^{-1} $.  
\end{proof}
\begin{remark}
Proposition \ref{p:sc3} is one particular choice of the leading-order asymptotics for the inverse $\mathcal{B}_\ep(\theta)^{-1}$ and could be taken in the place of those presented in Theorem \ref{t:mtgr}. That being said, the reason we choose to demonstrate the equivalent asymptotics given by Theorem \ref{t:mtgr} is to present leading-order asymptotics for the resolvents of the operator $\mathcal{B}_\ep(\theta)$ that preserve $A(\theta)$.
\end{remark}

To complete the proof of Theorem \ref{t:mtgr} is now a simple task.
\begin{proof}[Proof of Theorem \ref{t:mtgr}]
To show that $\big( \pi_{N(\theta)} M(\theta) \pi_{N(\theta)} + \tfrac{1}{\ep} A(\theta) \big)^{-1}$ exists, observe that
\begin{equation}
\label{liminv}
\pi_{N(\theta)} M(\theta) \pi_{N(\theta)} + \tfrac{1}{\ep} A(\theta)  = \begin{pmatrix}
\iota_{N(\theta)} & \iota_{R(\theta)}
\end{pmatrix} \begin{pmatrix}
\mathcal{B}_{\ep,N}(\theta) & 0 \\ 0 & \tfrac{1}{\ep}A_{R}(\theta)	
\end{pmatrix} \begin{pmatrix}
\iota^*_{N(\theta)} \\ \iota^*_{R(\theta)}
\end{pmatrix},
\end{equation}
and that by Hypothesis \ref{hyp:gr}, $A_{R}(\theta)=\iota_{R(\theta)}^*A(\theta)\iota_{R(\theta)}$ is continuously invertible on $R(\theta)$ and by Proposition \ref{p:sc1} \ref{en:lsgr1.5},  $\mathcal{B}_{\ep,N}$ is continuously invertible on $N(\theta)$ for all  $\ep>0$ and $\theta \in \Theta$.

We compute with the help of \eqref{liminv}
\[
\begin{aligned}
\pi_{R(\theta)} \big( \pi_{N(\theta)} M(\theta) \pi_{N(\theta)} + \tfrac{1}{\ep} A(\theta) \big)^{-1} & = \ep \iota_{R(\theta)} A_{R}(\theta)^{-1}\iota^*_{R(\theta)}, \quad \text{and} \\
\pi_{N(\theta)} \big( \pi_{N(\theta)} M(\theta) \pi_{N(\theta)} + \tfrac{1}{\ep} A(\theta) \big)^{-1} & = \iota_{N(\theta)} \mathcal{B}_{\ep,N}(\theta)^{-1}\iota^*_{N(\theta)}.
\end{aligned}
\]
Then, the proof of the theorem follows by Hypothesis \ref{hyp:gr} \ref{gr3} and Proposition \ref{p:sc3}.
\end{proof}

\begin{remark}\label{r:br}
 Note that as an upshot of the method of proof, we observe that the leading-order asymptotics are in fact determined by the behaviour of the resolvent on the space $N(\theta)$ only, cf. Proposition \ref{p:sc3}. In particular, it is possible to replace $A_R(\theta)^{-1}$ by \emph{any} uniformly bounded linear operator acting in $R(\theta)$ in order to obtain an asymptotically equivalent answer to the assertion in Theorem \ref{t:mtgr}. In order to see this, one has to simply refer to \eqref{liminv}. In more formal terms, we have also proven the following result: Let $(T_\theta)_\theta$ be a family acting in $(L(H_\theta))_\theta$ be such that $\sup_{\theta \in \Theta }|| \iota^*_{R(\theta)}T(\theta)\iota_{R(\theta)} || < \infty$. Then, for all $\epsilon>0$ small enough and $\theta\in\Theta$ we have
\begin{multline*}
\bigg\|\mathcal{B}_\epsilon(\theta)^{-1}-\begin{pmatrix}\iota_{N(\theta)}&\iota_{R(\theta)}\end{pmatrix}\begin{pmatrix}
\mathcal{B}_{\epsilon,N}(\theta)^{-1}  & 0 \\ 0 & \ep  \iota_{R(\theta)}^*T(\theta)\iota_{R(\theta)} 
\end{pmatrix}\begin{pmatrix}\iota^*_{N(\theta)}\\\iota^*_{R(\theta)}\end{pmatrix}\bigg\| \\ \leq \Big( 2 C_R (1 + \tfrac{\Vert M \Vert_\infty}{c} )^2 + \sup_{\theta \in \Theta }|| \iota^*_{R(\theta)}T(\theta)\iota_{R(\theta)} || \Big)\epsilon.
\end{multline*}
\end{remark}

In applications it may happen that  $A(\theta)$ and $M(\theta)$ are realisations of a direct-fibre decomposition. Such a case presents no additional difficulty from the perspective of the above approach and one can argue in a similar manner as follows.

\begin{hypothesis}\label{hyp:grtheta}
  Let $H_0$ be a Hilbert space, $\Theta\subseteq \mathbb{R}^d$ measurable. For each $\theta\in \Theta$ let $H_\theta$ be a Hilbert space and assume there exists a Hilbert space $H$ such that $H_0=\int_{\Theta}^\oplus H$ and $H_\theta\subseteq H$ closed; set $\iota_\theta\colon H_\theta\hookrightarrow H$. For every $\theta\in \Theta$, let $M(\theta)\in  L(H_\theta)$, $A(\theta)\colon \dom(A(\theta))\subseteq H_\theta\to H_\theta$. We assume the following properties:
  \begin{enumerate}[label=(\alph*)]
   \item\label{en:grt1} for all $\theta\in \Theta$, $A(\theta)=-A(\theta)^*$,
   \item\label{en:grt2} $\Re M(\theta)\geq c$ for all $\theta\in \Theta$,
   \item\label{en:grt3} $A(\theta)$, $\theta\in\Theta$, satisfies Hypothesis \ref{hyp:gr},
    \item\label{en:grt5} assume that $\theta\mapsto \iota_\theta\big(M(\theta)+\frac{1}{\eps}A(\theta)\big)^{-1}\iota_\theta^*$ is weakly measurable.
  \end{enumerate}
  For $\epsilon>0$, consider
  \[
     \mathcal{C}_\epsilon \coloneqq  \int_{\Theta}^\oplus \iota_\theta\left(M(\theta)+\tfrac{1}{\eps}A(\theta)\right)^{-1}\iota_\theta^* d\theta.
  \]
\end{hypothesis}

\begin{theorem}\label{t:mtgrtheta}Assume Hypothesis \ref{hyp:grtheta}. Then,  there exists $\kappa>0$ such that for all $\epsilon>0$, the following inequality	\[
\left\| \mathcal{C}_\epsilon - \int_{\Theta}^\oplus \iota_\theta\left(\pi_{N(\theta)}M(\theta)\pi_{N(\theta)}+\tfrac{1}{\epsilon}A(\theta)\right)^{-1}\iota_\theta^* d\theta\right\|\leq \kappa \eps
\] holds.
\end{theorem}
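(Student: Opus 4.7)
The plan is to reduce Theorem \ref{t:mtgrtheta} to a fibre-wise application of Theorem \ref{t:mtgr}. Hypothesis \ref{hyp:grtheta} \ref{en:grt1}--\ref{en:grt3} ensures that, for every $\theta \in \Theta$, the assumption \eqref{assump1} and Hypothesis \ref{hyp:gr} are in force, and that the relevant constants $c$, $\|M\|_\infty$ and $C_R$ are uniform in $\theta$ by construction. Consequently, the variant of Theorem \ref{t:mtgr} discussed in the subsequent remark (which dispenses with the upper bound on $\epsilon$) delivers
\[
  \| \mathcal{B}_\epsilon(\theta)^{-1} - \big(\pi_{N(\theta)} M(\theta) \pi_{N(\theta)} + \tfrac{1}{\epsilon} A(\theta)\big)^{-1} \|_{L(H_\theta)} \leq \kappa\,\epsilon
\]
for every $\epsilon > 0$ and $\theta \in \Theta$, with a constant $\kappa$ independent of both $\theta$ and $\epsilon$.

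Next, I would transfer this fibre-wise bound to $L(H)$ via the isometric embeddings $\iota_\theta \colon H_\theta \hookrightarrow H$. Since $\iota_\theta^* \iota_\theta$ is the identity on $H_\theta$, a direct computation shows that $\|\iota_\theta T \iota_\theta^*\|_{L(H)} = \|T\|_{L(H_\theta)}$ for every $T \in L(H_\theta)$, so the same constant $\kappa$ bounds the difference viewed in $L(H)$, uniformly in $\theta$. The desired inequality then follows from the standard identity that the operator norm of a direct integral $\int_\Theta^\oplus S(\theta)\, d\theta$ on $\int_\Theta^\oplus H$ equals the essential supremum over $\theta \in \Theta$ of $\|S(\theta)\|_{L(H)}$, applied to $S(\theta) \coloneqq \iota_\theta \mathcal{B}_\epsilon(\theta)^{-1} \iota_\theta^* - \iota_\theta \big(\pi_{N(\theta)} M(\theta) \pi_{N(\theta)} + \tfrac{1}{\epsilon} A(\theta)\big)^{-1} \iota_\theta^*$.

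The one point that requires care is the well-definedness of the comparison direct integral: Hypothesis \ref{hyp:grtheta} \ref{en:grt5} only provides weak measurability of $\theta \mapsto \iota_\theta \mathcal{B}_\epsilon(\theta)^{-1} \iota_\theta^*$, whereas I need the analogous property for the fibre-homogenised inverse. I expect this to be the main bookkeeping obstacle, but it can be handled by invoking the block identity \eqref{liminv} together with Proposition \ref{p:sc2}: both diagonal blocks $\iota_\theta \iota_{N(\theta)} \mathcal{B}_{\epsilon,N}(\theta)^{-1} \iota_{N(\theta)}^* \iota_\theta^*$ and $\epsilon\, \iota_\theta \iota_{R(\theta)} A_R(\theta)^{-1} \iota_{R(\theta)}^* \iota_\theta^*$ are recovered from the given measurable map $\theta \mapsto \iota_\theta \mathcal{B}_\epsilon(\theta)^{-1} \iota_\theta^*$ through the algebraic relations in Proposition \ref{p:sc2}, and therefore inherit weak measurability. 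Once measurability is secured, the uniform estimate above completes the proof, with the constant $\kappa$ being precisely the one produced by Theorem \ref{t:mtgr}.
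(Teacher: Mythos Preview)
Your proposal is correct and follows essentially the same approach as the paper: rewrite the difference as a direct integral of the fibre-wise differences and then invoke Theorem \ref{t:mtgr} uniformly in $\theta$. You are in fact more careful than the paper's own proof about the weak measurability of the comparison integrand, which the paper simply takes for granted.
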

\begin{proof}
 The proof follows from Theorem \ref{t:mtgr}. In fact, note that
 \begin{multline*}
   \mathcal{C}_\epsilon - \int_{\Theta}^\oplus \iota_\theta\left(\pi_{N(\theta)}M(\theta)\pi_{N(\theta)}+\tfrac{1}{\epsilon}A(\theta)\right)^{-1}\iota_\theta^* d\theta
   \\ = \int_{\Theta}^\oplus \iota_\theta\left(M(\theta)+\tfrac{1}{\eps}A(\theta)\right)^{-1}\iota_\theta^* d\theta - \int_{\Theta}^\oplus \iota_\theta\left(\pi_{N(\theta)}M(\theta)\pi_{N(\theta)}+\tfrac{1}{\epsilon}A(\theta)\right)^{-1}\iota_\theta^* d\theta
   \\ = \int_{\Theta}^\oplus \iota_\theta\left(\left(M(\theta)+\tfrac{1}{\eps}A(\theta)\right)^{-1}-\left(\pi_{N(\theta)}M(\theta)\pi_{N(\theta)}+\tfrac{1}{\epsilon}A(\theta)\right)^{-1}\right)\iota_\theta^* d\theta.
\end{multline*}
Thus, the asymptotic analysis requires estimating 
\[
\left(M(\theta)+\tfrac{1}{\eps}A(\theta)\right)^{-1}-\left(\pi_{N(\theta)}M(\theta)\pi_{N(\theta)}+\tfrac{1}{\epsilon}A(\theta)\right)^{-1}
\]
uniformly in $\theta$, which is done in Theorem \ref{t:mtgr}.
\end{proof}
The analogue of Proposition \ref{p:sc3} is as follows. 
\begin{theorem}\label{t:mtgrtheta2}Assume Hypothesis \ref{hyp:grtheta}. Then,  there exists $\kappa>0$ such that for all $\epsilon>0$, the following inequality	\[
	\left\| \mathcal{C}_\epsilon - \int_{\Theta}^\oplus \iota_\theta \iota_{N(\theta)}\big(\iota^*_{N(\theta)}\big( M(\theta)+\tfrac{1}{\epsilon}A(\theta) \big)\iota_{N(\theta)}\big)^{-1} \iota_{N(\theta)}^*\iota_\theta^* d\theta\right\|\leq \kappa\eps
	\] holds. 
\end{theorem}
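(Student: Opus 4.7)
The plan is to mirror the proof of Theorem \ref{t:mtgrtheta}, but with Proposition \ref{p:sc3} playing the role of Theorem \ref{t:mtgr}. Since the direct integral on the right-hand side of the claim is assembled from the very fibrewise objects $\iota_{N(\theta)}\mathcal{B}_{\epsilon,N}(\theta)^{-1}\iota_{N(\theta)}^{*}$ that already appeared in Proposition \ref{p:sc3}, the natural strategy is to rewrite the difference as a single direct integral of fibre differences and then invoke the standard fact that the operator norm of a direct integral of a weakly measurable, uniformly bounded family equals the essential supremum of the fibre norms.

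Concretely, first I would combine the two integrals into
\[
\mathcal{C}_\epsilon - \int_{\Theta}^\oplus \iota_\theta \iota_{N(\theta)}\mathcal{B}_{\epsilon,N}(\theta)^{-1}\iota^{*}_{N(\theta)}\iota_\theta^{*} \, d\theta = \int_\Theta^\oplus \iota_\theta\bigl(\mathcal{B}_\epsilon(\theta)^{-1}-\iota_{N(\theta)}\mathcal{B}_{\epsilon,N}(\theta)^{-1}\iota^{*}_{N(\theta)}\bigr)\iota_\theta^{*}\, d\theta,
\]
where $\mathcal{B}_{\epsilon,N}(\theta)=\iota_{N(\theta)}^{*}(M(\theta)+\tfrac{1}{\epsilon}A(\theta))\iota_{N(\theta)}$ and the integrand on the right inherits weak measurability from Hypothesis \ref{hyp:grtheta}\ref{en:grt5} together with the block representation in \eqref{liminv}. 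Passing to the essential supremum over $\theta$ and invoking Proposition \ref{p:sc3} fibrewise gives an $\epsilon$-linear bound with constant $2C_R(1+\|M\|_\infty/c)^{2}$ that is uniform in $\theta\in\Theta$, which yields the desired estimate on the regime $\epsilon\in(0,\epsilon_0)$.

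The main obstacle I anticipate is that Proposition \ref{p:sc3} is only stated for $\epsilon\in(0,\epsilon_0)$, whereas Theorem \ref{t:mtgrtheta2} asks for every $\epsilon>0$. For $\epsilon\geq\epsilon_0$ I would use the a priori bounds $\|\mathcal{B}_\epsilon(\theta)^{-1}\|\leq 1/c$ from Corollary \ref{c:binv} and $\|\iota_{N(\theta)}\mathcal{B}_{\epsilon,N}(\theta)^{-1}\iota^{*}_{N(\theta)}\|\leq 1/c$ from Proposition \ref{p:sc1}\ref{en:lsgr1.5} to estimate the fibre difference by $2/c\leq \tfrac{2}{c\epsilon_0}\epsilon$, and absorb this into an enlarged $\kappa$ independent of $\epsilon>0$ and $\theta\in\Theta$; this mirrors how the paper drops the smallness assumption after Theorem \ref{t:mtgr}. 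A secondary issue is the weak measurability of the map $\theta\mapsto \iota_\theta\iota_{N(\theta)}\mathcal{B}_{\epsilon,N}(\theta)^{-1}\iota^{*}_{N(\theta)}\iota_\theta^{*}$, which is needed to make sense of the subtracted integral; this can be read off from the explicit block formula \eqref{liminv} combined with Hypothesis \ref{hyp:grtheta}\ref{en:grt5}, ensuring that the right-hand side of the displayed identity above is indeed a weakly measurable direct integral and the norm--essential-supremum identity applies.
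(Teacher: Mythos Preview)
Your proposal is correct and follows essentially the same approach as the paper: rewrite the difference as a single direct integral of fibre differences and apply Proposition \ref{p:sc3} uniformly in $\theta$, exactly as in the proof of Theorem \ref{t:mtgrtheta} with Proposition \ref{p:sc3} in place of Theorem \ref{t:mtgr}. Your treatment is in fact more explicit than the paper's, which handles the extension to all $\epsilon>0$ only via the remark after Theorem \ref{t:mtgr} and does not comment on the measurability of the subtracted integrand; your appeal to Corollary \ref{c:binv} and Proposition \ref{p:sc1}\ref{en:lsgr1.5} for large $\epsilon$ and to \eqref{liminv} for measurability fills in details the paper leaves implicit.
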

\begin{proof}
Arguing as in the proof of Theorem \ref{t:mtgrtheta}, the asymptotic analysis requires estimating the difference 
\[
\left(M(\theta)+\tfrac{1}{\eps}A(\theta)\right)^{-1}-\iota_{N(\theta)}\big(\iota^*_{N(\theta)}\big( M(\theta)+\tfrac{1}{\epsilon}A(\theta) \big)\iota_{N(\theta)}\big)^{-1} \iota_{N(\theta)}^*,
\]
uniformly in $\theta$, which is given by Proposition \ref{p:sc3}.
\end{proof}
\section{Fibre homogenisation of second-order PDE systems with rapidly oscillating periodic coefficients}
\label{s:example}
In order to put the abstract result exposed in Section \ref{s:gen} into perspective, we shall study a classical example of homogenisation theory: an elliptic system of $n$ equations posed on $\mathbb{R}^d$ with rapidly varying periodic coefficients. For this, we denote $Y\coloneqq (0,1)^d$, and for a vector space $V$, denote $1_V\coloneqq (V\ni v\mapsto v\in V)$. For a subspace $V\subseteq L^2(Y)$ and functions $g,h\in L^2(Y)$, we denote
\begin{align*}
g\bot h&\hspace{0.05cm}\colon\hspace{-0.25cm}\Leftrightarrow \langle g,h\rangle_{L^2(Y)}=0, \qquad \text{ and } \qquad  V\bot h \coloneqq \{ v\in V; \langle v, h\rangle_{L^2(Y)} = 0\}.
\end{align*}
 We set 
\begin{align*}
   \mathcal{M}_{n,d}^\#& \coloneqq \big\{ a \in L^\infty(\mathbb{R}^d;L(\mathbb{C}^{n\times d})) \, ;  a(\cdot+k)=a(\cdot)\,(k\in \mathbb{Z}^d), \exists \nu>0: \Re a \ge \nu1_{\mathbb{C}^{n\times d}} \big\},   \\[5pt] 
   \mathcal{S}_{n,d}^\#&\coloneqq \big\{ s\in L^\infty(\mathbb{R}^d;L(\mathbb{C}^n)) \, ; \, s(\cdot+k)=s(\cdot)\,(k\in\mathbb{Z}^d),\, \exists \nu>0:\Re s\geq \nu1_{\mathbb{C}^n}\big\},
\end{align*}
\[
a_{ijkl}\coloneqq \langle a e_i\otimes e_j,e_k\otimes e_l\rangle_{\mathbb{C}^{n\times d}}\in L^\infty(\mathbb{R}^d)\quad( a\in \mathcal{M}_{n,d}^\#, \, i,k\in\{1,\ldots,n\},j,l\in\{1,\ldots,d\}),
\]
and
\[
s_{ij}\coloneqq \langle s e_i,e_j\rangle_{\mathbb{C}^n}\in L^\infty(\mathbb{R}^d)\quad (s\in \mathcal{S}_{n,d}^\#, \, i,j\in\{1,\ldots,n\}),
\]
where  $e_j$ is the $j$-th Euclidean basis vector.

For given $a\in \mathcal{M}_{n,d}^\#$, $s\in \mathcal{S}_{n,d}^\#$, $f\in [L^2(\mathbb{R}^d)]^n$ and $\epsilon>0$, we consider the elliptic problem
\begin{equation}\label{eq:ep}
  \begin{cases}
    \text{find $u_\epsilon\in [H^1(\mathbb{R}^d)]^n$ such that}
    \\ -\dive a\left(\tfrac{\cdot}{\ep}\right)\grad u_\epsilon  + s\left(\tfrac{\cdot}{\ep}\right)u_\epsilon = f.
  \end{cases}
\end{equation}
Let  $\mathcal{U}_\ep$ be the Gelfand transform, see Definition \ref{def.gt}, and $\dive_\theta$ and $\grad_\theta$ denote the divergence and gradient differential operators, respectively, on function spaces of $\theta$-quasi-periodic Sobolev functions, see Definition \ref{def.dops}. Then, the main result of the section for the class of  problems \eqref{eq:ep} is as follows.
\begin{theorem}[Fibre homogenisation theorem]\label{t:quanthom} Let $a \in \mathcal{M}_{n,d}^\#$, $s \in \mathcal{S}_{n,d}^\#$. Then, there exists  $\kappa>0$ such that for all $\epsilon>0$, the inequality
	\begin{equation*}
	\Bigl\| \big(-\dive a\left(\tfrac{\cdot}{\epsilon}\right)\grad + s\left(\tfrac{\cdot}{\epsilon}\right)\big)^{-1}  - \mathcal{U}_\epsilon^{-1}\int_{\Theta}^\oplus \big(- \epsilon^{-2}\dive_\theta a^{\textnormal{hom}}(\theta) \grad_\theta  + \m(s)\big)^{-1}d\theta\mathcal{U}_\epsilon \Bigr\| \leq \kappa \epsilon
	\end{equation*}
holds.  The constant matrix $\m(s) \in \mathcal{S}_{n,d}^\#$ and constant fourth-order tensor $a^{\textnormal{hom}}(\theta) \in \mathcal{M}_{n,d}^\#$, $\theta \in \Theta \coloneqq [-\pi,\pi)^d$, are given as follows:
\[
\m(s)_{ij}\coloneqq  \int_Y s_{ij}(y)\, {\textnormal{d}}y \qquad ({i,j\in\{1,\ldots,n\}}),
\]
and

\begin{equation}
\label{ahomtheta}
\begin{aligned}
a^{\textnormal{hom}}_{ijrs}(\theta) := \sum_{k=1}^n \sum_{l=1}^d \int_Y a_{ijkl} \big( \partial_{l} N^{(rs)}_{\theta k}(y) +e^{\i \theta \cdot y}\delta_{kr}\delta_{ls}\big)&e^{-\i \theta \cdot y}\, {\textnormal{d}}y \\ & ({i,r \in \{1,\ldots,n\}, j,s \in \{1,\ldots,d\}}),
\end{aligned}
\end{equation}
where $N^{(rs)}_\theta \in [H^1_\theta(Y) \perp e^{\i \langle\theta, \cdot \rangle_{\mathbb{C}^d}}]^n$ uniquely solves
\begin{equation}
\label{cellprob}
\langle a[\nabla N^{(rs)}_\theta + e^{\i \langle\theta, \cdot \rangle_{\mathbb{C}^d}} e_r \otimes e_s ], \nabla \phi \rangle = 0, \qquad ( \phi \in [H^1_\theta(Y) \perp e^{\i \langle\theta, \cdot \rangle_{\mathbb{C}^d}}]^n)
\end{equation}
with $e_r =(\delta_{ri})_{i\in\{1,\ldots,n\}}$ and $e_s = (\delta_{sj})_{j \in \{1,\ldots,d\}}$.
\end{theorem}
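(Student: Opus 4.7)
My plan is to reduce the statement to Theorem~\ref{t:hom2} by recasting both the given operator and its candidate limit as instances of the abstract first-order family $M + \epsilon^{-1} A$ with skew-selfadjoint $A$. First, $\mathcal{U}_\epsilon$ decomposes $-\dive a(\cdot/\epsilon)\grad + s(\cdot/\epsilon)$ into fibres $L_\epsilon(\theta) = -\epsilon^{-2}\dive_\theta(a\grad_\theta\,\cdot\,) + s$ acting on $[L^2(Y)]^n$ with $\theta$-quasi-periodic boundary conditions. Second, introducing the flux $\sigma := \epsilon^{-1}a\grad_\theta u$ rewrites $L_\epsilon(\theta)u = f$ as
\[
\mathcal{B}_\epsilon(\theta)\begin{pmatrix}u\\ \sigma\end{pmatrix} = \begin{pmatrix}f\\ 0\end{pmatrix},\qquad \mathcal{B}_\epsilon(\theta) = M + \tfrac{1}{\epsilon}A(\theta),
\]
on $H_\theta := [L^2(Y)]^n \times [L^2(Y)]^{n\times d}$, with $M = \mathrm{diag}(s, a^{-1})$ bounded coercive and $A(\theta) = \begin{pmatrix}0 & -\dive_\theta\\ -\grad_\theta & 0\end{pmatrix}$ skew-selfadjoint; thus \eqref{assump1} holds and $L_\epsilon(\theta)^{-1}$ is recovered as the $(1,1)$-block of $\mathcal{B}_\epsilon(\theta)^{-1}$. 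Applying the identical reformulation to $-\epsilon^{-2}\dive_\theta a^{\textnormal{hom}}(\theta)\grad_\theta + \m(s)$ produces $\widetilde{\mathcal{B}}_\epsilon(\theta) = \widetilde{M}(\theta) + \tfrac{1}{\epsilon}A(\theta)$ with the \emph{same} $A(\theta)$ and $\widetilde{M}(\theta) = \mathrm{diag}\bigl(\m(s),\, a^{\textnormal{hom}}(\theta)^{-1}\bigr)$.

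\textbf{Choice of $N(\theta)$ and verification of Hypothesis~\ref{hyp:gr}.} The heart of the proof is the construction of a subspace $N(\theta) = N_u(\theta) \oplus N_\sigma(\theta) \subseteq H_\theta$ satisfying Hypothesis~\ref{hyp:gr} together with the matching identity needed for Theorem~\ref{t:hom2}. I set $N_u(\theta) := \mathrm{span}\bigl(e^{\i \langle\theta,\cdot\rangle_{\mathbb{C}^d}} e_i\bigr)_{i=1}^n$ (the lowest Bloch modes on the $u$-factor) and let $N_\sigma(\theta)$ be the closed linear span of $\ker\dive_\theta$ together with the corrector fluxes $\chi_{rs}(\theta) := a\bigl(\grad N^{(rs)}_\theta + e^{\i\langle\theta,\cdot\rangle_{\mathbb{C}^d}} e_r\otimes e_s\bigr)$, where $N^{(rs)}_\theta$ solves \eqref{cellprob}. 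Hypothesis~\ref{hyp:gr}\ref{gr1} (boundedness of $A\pi_N$) follows because the gradients and divergences of the spanning vectors have $L^2$-norms of order $|\theta| \le \pi\sqrt{d}$; condition~\ref{gr2} is a direct consequence of the weak formulation of the cell problem, which forces $\dive_\theta\chi_{rs}(\theta) \in N_u(\theta)$, together with $\grad_\theta N_u(\theta) \perp \ker\dive_\theta$ by integration by parts; condition~\ref{gr3} (uniform invertibility of $\iota_R^* A \iota_R$) reduces to a $\theta$-uniform Poincar\'e-type estimate for quasi-periodic functions orthogonal to $\mathrm{span}\bigl(e^{\i\langle\theta,\cdot\rangle_{\mathbb{C}^d}}\bigr)$, which holds because $|\theta + 2\pi k| \ge \pi$ for every $\theta \in [-\pi,\pi)^d$ and every $k \in \mathbb{Z}^d \setminus \{0\}$.

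\textbf{Matching identity and conclusion.} It remains to verify $\pi_{N(\theta)} M \pi_{N(\theta)} = \pi_{N(\theta)} \widetilde{M}(\theta) \pi_{N(\theta)}$; Theorem~\ref{t:hom2} will then yield $\|\mathcal{B}_\epsilon(\theta)^{-1} - \widetilde{\mathcal{B}}_\epsilon(\theta)^{-1}\|_{L(H_\theta)} \le \kappa\epsilon$ uniformly in $\theta$, and extracting the $(1,1)$-block, integrating over $\Theta$ and inverting $\mathcal{U}_\epsilon$ produces the stated bound. The $(1,1)$-block identity $\pi_{N_u} s \pi_{N_u} = \m(s) = \pi_{N_u} \m(s) \pi_{N_u}$ is immediate from $|e^{\i\langle\theta,y\rangle}|^2 = 1$ and periodicity of $s$. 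The $(2,2)$-block identity $\pi_{N_\sigma} a^{-1}\pi_{N_\sigma} = \pi_{N_\sigma} a^{\textnormal{hom}}(\theta)^{-1}\pi_{N_\sigma}$ is the substantive step: expanding $a^{-1}\chi_{rs}(\theta) = \grad N^{(rs)}_\theta + e^{\i\langle\theta,\cdot\rangle_{\mathbb{C}^d}} e_r \otimes e_s$, using the cell problem \eqref{cellprob} to annihilate the $\grad N^{(r's')}_\theta$-contribution in $\langle a^{-1}\chi_{rs}(\theta), \chi_{r's'}(\theta)\rangle$, and comparing with \eqref{ahomtheta} shows that the two operators agree on the finite-dimensional corrector basis spanning $N_\sigma(\theta)/\ker\dive_\theta$, while both vanish on $\ker\dive_\theta$. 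The main obstacle is precisely this coordination between the choice of $N_\sigma(\theta)$ and the cell problem: $N_\sigma(\theta)$ must be just rich enough to secure both the invariance condition~\ref{gr2} and the matching identity, which is exactly why the $\theta$-dependent corrector tensor $a^{\textnormal{hom}}(\theta)$ from \eqref{ahomtheta} arises, in place of the $\theta$-independent homogenised tensor of standard periodic homogenisation.
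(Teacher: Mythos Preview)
Your overall strategy --- Gelfand transform, first-order reformulation, then Theorem~\ref{t:hom2} --- is the same as the paper's, but your concrete choice of $N_\sigma(\theta)$ breaks the argument at the matching step. You claim that $\pi_{N_\sigma} a^{-1}\pi_{N_\sigma}$ and $\pi_{N_\sigma} a^{\textnormal{hom}}(\theta)^{-1}\pi_{N_\sigma}$ ``both vanish on $\ker\dive_\theta$''. They do not: for any nonzero $\sigma\in\ker\dive_\theta\subset N_\sigma(\theta)$ one has $\Re\langle a^{-1}\sigma,\sigma\rangle\ge \nu'\|\sigma\|^2>0$, so $\pi_{N_\sigma}a^{-1}\sigma\neq 0$; the same holds for $a^{\textnormal{hom}}(\theta)^{-1}$. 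Worse, for a generic non-constant $a$ and $\sigma\in\ker\dive_\theta$ the quadratic forms $\langle a^{-1}\sigma,\sigma\rangle$ and $\langle a^{\textnormal{hom}}(\theta)^{-1}\sigma,\sigma\rangle$ differ (the first is a weighted $L^2$-integral against a non-constant weight, the second against a constant), so the identity $\pi_{N(\theta)}M\pi_{N(\theta)}=\pi_{N(\theta)}\widetilde M(\theta)\pi_{N(\theta)}$ required by Theorem~\ref{t:hom2} is false for your $N(\theta)$. Since $\ker\dive_\theta$ is infinite-dimensional for $d\ge 2$, this is not a boundary case.

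The paper avoids this obstruction by shrinking the flux space before writing the first-order system: instead of $[L^2(Y)]^{n\times d}$ it works on $P(\theta)=e^{\i\langle\theta,\cdot\rangle}\mathbb{C}^{n\times d}\oplus\grad_\theta[H^1_\theta(Y)\perp e^{\i\langle\theta,\cdot\rangle}]^n$, see \eqref{Pspace} and Proposition~\ref{t:thetafirst}. The second block of $M(\theta)$ is then $(\iota_\theta^* a\,\iota_\theta)^{-1}$, the inverse of $a$ \emph{restricted} to $P(\theta)$, not the pointwise $a^{-1}$ you use. With this reduction one can take $N(\theta)=e^{\i\langle\theta,\cdot\rangle}\mathbb{C}^n\oplus e^{\i\langle\theta,\cdot\rangle}\mathbb{C}^{n\times d}$, which is finite-dimensional; Hypothesis~\ref{hyp:gr} is verified in Propositions~\ref{p:hypasse} and~\ref{p:hypasse3}, and the matching identity becomes the clean formula \eqref{eq:homform}, namely $\iota_{N_2(\theta)}^*(\iota_\theta^* a\,\iota_\theta)^{-1}\iota_{N_2(\theta)}=a^{\textnormal{hom}}(\theta)^{-1}$. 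In short, the directions in $\ker\dive_\theta$ that spoil your matching are simply never part of the Hilbert space $H_\theta$ in the paper's setup.
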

\begin{remark}
\label{r.compare} { \ } 

\begin{enumerate}[label=(\alph*)]
	\item{The well-posedness of \eqref{eq:ep} follows from noting the equivalence of this problem with a first-order formulation, see Proposition \ref{t:mtgrtheta} below, and Lemma \ref{l:invblty}.}
	\item{The well-posedness of \eqref{cellprob} is presented for the reader's convenience at the end of the section, cf. Proposition \ref{p:ahomwd}.}
	\item{It is instructive to compare the homogenisation result here to the standard result available in the literature; the standard result states that $\big(-\dive a(\cdot / \ep) \grad + s(\cdot / \ep) \big)^{-1}$ is $\ep$-close in operator-norm to $\big(-\dive a^{\textnormal{hom}} \grad+\textnormal{m}(s)\big)^{-1}$ where
		\[
		a^{\textnormal{hom}}_{ijrs} := \sum_{k=1}^n \sum_{l=1}^d\int_Y a_{ijkl} \big( \partial_{l} N^{(rs)}_{k}(y) +\delta_{kr}\delta_{ls}\big)\, {\textnormal{d}}y \qquad ({i,r \in \{1,\ldots,n\}, j,s \in \{1,\ldots,d\}}),
		\]
		for $N^{(rs)} \in [H^1_\#(Y)\perp 1]^n$ the unique solution to
		\begin{equation*}
		\langle a[\nabla N^{(rs)}_\# + e_r \otimes e_s ], \nabla \phi \rangle = 0, \qquad (\phi \in [H^1_\#(Y) \perp 1]^n).
		\end{equation*}
		A quick inspection determines the equality $a^{\textnormal{hom}} = a^{\textnormal{hom}}(\theta) \vert_{\theta = 0}$, and one can deduce that the equivalent leading-order asymptotics presented in Theorem \ref{t:quanthom} lead to the standard homogenisation result by comparing the difference $a^{\textnormal{hom}}(\theta) - a^{\textnormal{hom}}(0)$ with respect to $\theta$. This is the subject of Section \ref{sec:ahom}.
}
\end{enumerate}
\end{remark}
The remainder of this section is dedicated to the proof of Theorem \ref{t:quanthom}. The general strategy we follow is to first reformulate \eqref{eq:ep} in the framework presented in Section \ref{s:gen}; this is done in Proposition \ref{t:thetafirst}. Then we show that, in this setting, Hypothesis \ref{hyp:grtheta} \ref{en:grt1}-\ref{en:grt3} (in particular \eqref{assump1} and Hypothesis \ref{hyp:gr}) holds and, therefore, Theorem \ref{t:mtgr} follows; this is done in Propositions \ref{p:hypasse} and \ref{p:hypasse3}. Next, we show that $\widetilde{M}(\theta) = a^{\rm hom}(\theta)$ satisfies the assumptions of Theorem \ref{t:hom2}; this is identity \eqref{eq:homform}. Lastly, we aim to use Theorem \ref{t:mtgrtheta} to establish Theorem \ref{t:quanthom}. This requires proving the weak measurability assumption: Hypothesis \ref{en:grt5}; this is Theorem \ref{t:wm}. Bearing this strategy mind,  most of the work of this section will  be in  establishing Hypothesis \ref{hyp:grtheta}.

Let us begin with the reformulation of \eqref{eq:ep} via an application of the Gelfand transform:
\begin{definition}\label{def.gt}
	For $\epsilon>0$, $f\in [C_c(\mathbb{R}^d)]^n$, we define
	\[
	\mathcal{U}_\epsilon f(\theta,y)\coloneqq \left(\frac{\epsilon}{2\pi}\right)^{d/2} \sum_{k\in\mathbb{Z}^d} f\big(\epsilon(y+k)\big)e^{-i\theta\cdot k} \qquad 	(\theta\in  \Theta, y\in Y).
	\]
\end{definition}
It is well-known, see for example \cite[Section 3.2, pg. 615]{BeLiPa}, that  $\mathcal{U}_\epsilon$ extends to a unitary operator from $[L^2(\mathbb{R}^d)]^n$ into $[L^2(\Theta;L^2_\theta(\mathbb{R}^d))]^n$, where $L^2_\theta(\mathbb{R}^d)\coloneqq \{f\in L_{\textnormal{loc}}^2(\mathbb{R}^d); f(\cdot+k)=e^{\i \theta \cdot k}f(\cdot)\,(k\in\mathbb{Z}^d)\} (\cong L^2(Y))$. Henceforth, we identify $L^2(Y)$ with $L_\theta^2(\mathbb{R}^d)$.
\begin{definition}
\label{def.dops}	We define
\begin{flalign*}
\grad &\colon [H^1(\mathbb{R}^d)]^n\subseteq [L^2(\mathbb{R}^d)]^n\to [L^2(\mathbb{R}^d)]^{n\times d},(\phi_i)_{i\in\{1,\ldots,n\}}\mapsto (\partial_j\phi_i)_{i \in \{1,\ldots,n\},j\in\{1,\ldots,d\}},\\
\grad_\theta&\colon [H^1_\theta(Y)]^n\subseteq [L^2(Y)]^n\to [L^2(Y)]^{n\times d},(\phi_i)_{i\in\{1,\ldots,n\}}\mapsto (\partial_j\phi_i)_{i \in \{1,\ldots,n\},j\in\{1,\ldots,d\}},
\end{flalign*}
where $H^1_\theta(Y)$ is the Sobolev space of $\theta$-quasi-periodic functions taken to be the closure, with respect to the $H^1(Y)$ norm, of $C^\infty_\theta(Y)$: smooth functions $\phi$ that satisfy $\phi(\cdot+k)=e^{\i \theta \cdot k}\phi(\cdot)$, $k\in \mathbb{Z}^d$. We also introduce
\[
\dive \coloneqq -\grad^*,\text{ and }\dive_\theta \coloneqq -\grad^*_\theta,
\] 
as well as
\[
   \grad_\#\coloneqq \grad_0,\,\dive_\#\coloneqq \dive_0 \text{ and }H_\#^1(Y) = \dom(\grad_\#).
\]
\end{definition}
The operators just introduced are closed. Indeed, the divergence operators are skew-adjoints of the densely defined gradient operators. The operator $\grad_\theta$ is closed, since $\grad:[H^1(Y)]^n\subseteq [L^2(Y)]^n\to [L^2(Y)]^{n\times d}$ is closed, $\grad_\theta\subseteq \grad$ and $H^1_\theta(Y)\subseteq H^1(Y)$ is, by definition, closed. 

 For the convenience of the reader, we now gather some well-known properties on the interplay between the Gelfand transform and the differential operators introduced above. As is customary in PDE-theory, we employ a slight abuse of notion by not distinguishing between $\grad_\theta$ acting on $L^2(Y)$ and the corresponding gradient (acting as differentiation with respect to $y$) in $L^2\big(\Theta;L^2(Y)\big)$. 
\begin{proposition}
\label{t:guder} Let $\epsilon>0$, $a\in \mathcal{M}_{n,d}^\#$, $s\in \mathcal{S}_{n,d}^\#$. The following statements hold:
\begin{enumerate}[label=(\alph*)]
	\item\label{en:guder1} $ \mathcal{U}_\epsilon \grad = \tfrac{1}{\epsilon}\grad_\theta\mathcal{U}_\epsilon$, 
	\item\label{en:guder2} $ \mathcal{U}_\epsilon \dive = \tfrac{1}{\epsilon}\dive_\theta\mathcal{U}_\epsilon$, 
	\item\label{en:guder3} for all $(\theta,y)\in \Theta\times Y$ we have $ (\mathcal{U}_\epsilon a(\cdot/\epsilon)f)(\theta,y)=a(y)(\mathcal{U}_\epsilon f)(\theta,y)$ and \\$ (\mathcal{U}_\epsilon s(\cdot/\epsilon)f)(\theta,y)=s(y)(\mathcal{U}_\epsilon f)(\theta,y)$.
\end{enumerate}
\end{proposition}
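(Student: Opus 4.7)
The plan is to verify each identity directly from the defining series for $\mathcal{U}_\epsilon$ on the core $[C_c^\infty(\mathbb{R}^d)]^n$, and then to extend the identities to the full operator domains using closedness of the operators involved together with the unitarity of $\mathcal{U}_\epsilon$. These are standard facts about the Gelfand transform, so the proof is largely a matter of bookkeeping.

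For \ref{en:guder1}, I would take $f\in [C_c^\infty(\mathbb{R}^d)]^n$ and differentiate the series
\[
\mathcal{U}_\epsilon f(\theta,y)=\left(\tfrac{\epsilon}{2\pi}\right)^{d/2}\sum_{k\in \mathbb{Z}^d}f\bigl(\epsilon(y+k)\bigr)e^{-\i\theta\cdot k}
\]
in the $y$-variable termwise: the chain rule applied to $f(\epsilon(y+k))$ produces the factor $\epsilon$ and $(\grad f)(\epsilon(y+k))$, giving $\grad_\theta \mathcal{U}_\epsilon f=\epsilon\,\mathcal{U}_\epsilon \grad f$ pointwise and in $L^2$. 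A short check with the substitution $k\mapsto k+m$ for $m\in\mathbb{Z}^d$ also shows that $\mathcal{U}_\epsilon f(\theta,\cdot)$ is genuinely $\theta$-quasi-periodic, placing the image in $L^2(\Theta;H^1_\theta(Y))$. Since $[C_c^\infty(\mathbb{R}^d)]^n$ is a core of $\grad$, both $\grad$ and $\grad_\theta$ are closed, and $\mathcal{U}_\epsilon$ is a unitary isomorphism mapping $[H^1(\mathbb{R}^d)]^n$ onto $[L^2(\Theta;H^1_\theta(Y))]^n$, the identity extends by closure to all of $\dom(\grad)$.

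For \ref{en:guder2}, I would take the Hilbert-space adjoint of \ref{en:guder1}. Writing \ref{en:guder1} as $\grad_\theta \mathcal{U}_\epsilon = \epsilon\,\mathcal{U}_\epsilon\grad$ and using $\mathcal{U}_\epsilon^*=\mathcal{U}_\epsilon^{-1}$, one obtains $\mathcal{U}_\epsilon^{-1}\grad_\theta^*=\epsilon\,\grad^*\mathcal{U}_\epsilon^{-1}$. Multiplying on the left by $\mathcal{U}_\epsilon$, dividing by $\epsilon$, and inserting $\dive=-\grad^*$ and $\dive_\theta=-\grad_\theta^*$ yields the claimed identity $\mathcal{U}_\epsilon\dive=\tfrac{1}{\epsilon}\dive_\theta\mathcal{U}_\epsilon$. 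For \ref{en:guder3}, the $\mathbb{Z}^d$-periodicity of $a$ (respectively $s$) gives $a(\epsilon(y+k)/\epsilon)=a(y+k)=a(y)$ for every $k\in\mathbb{Z}^d$, so the factor $a(y)$ is independent of $k$ and can be pulled outside the defining series of $\mathcal{U}_\epsilon[a(\cdot/\epsilon)f](\theta,y)$, giving the pointwise identity; the same argument handles $s$.

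The only mildly delicate point is the domain matching in \ref{en:guder1}-\ref{en:guder2}: one must check that the two closed extensions obtained by extending the identity from the core $[C_c^\infty(\mathbb{R}^d)]^n$ coincide rather than one being a proper restriction of the other. This is where unitarity of $\mathcal{U}_\epsilon$ is essential, since it maps cores to cores and intertwines closures. Once this is spelled out the proposition follows.
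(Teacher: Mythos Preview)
Your proposal is correct and follows essentially the same route as the paper: verify \ref{en:guder1} on $[C_c^\infty(\mathbb{R}^d)]^n$ by termwise differentiation and extend by closedness, obtain \ref{en:guder2} from \ref{en:guder1} by taking adjoints using unitarity of $\mathcal{U}_\epsilon$, and read off \ref{en:guder3} from periodicity. One small point: in your argument for \ref{en:guder1} you invoke that $\mathcal{U}_\epsilon$ maps $[H^1(\mathbb{R}^d)]^n$ \emph{onto} $[L^2(\Theta;H^1_\theta(Y))]^n$, which is essentially the statement to be proved; the paper instead handles the domain matching you flag at the end by establishing both inclusions directly, using that $[C_c^\infty(\mathbb{R}^d)]^n$ is a core for $\grad$ to get $\grad\subseteq \mathcal{U}_\epsilon^{-1}\tfrac{1}{\epsilon}\grad_\theta\mathcal{U}_\epsilon$, and that $C^\infty_\theta(Y)$ is a core for $\grad_\theta$ to get the reverse inclusion.
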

\begin{proof}
 The proof of \ref{en:guder3} easily follows from the explicit formula for the Gelfand transformation for $f\in [C_c(\mathbb{R}^d)]^n$ and the periodicity of $a$ and $s$. The statement in \ref{en:guder2} follows from \ref{en:guder1} upon using the definition of $\dive$ and $\dive_\theta$ as, respectively, being skew-adjoints of $\grad$ and $\grad_\theta$ along with the fact $\mathcal{U}_\epsilon$ is unitary. Thus, it remains to demonstrate \ref{en:guder1}. For this, we observe that \[\mathcal{U}_\epsilon \grad \phi = \tfrac{1}{\epsilon}\grad_\theta \mathcal{U}_\epsilon\phi\] holds for $\phi\in [C_c^\infty(\mathbb{R}^d)]^n$. Therefore, we deduce $\grad\subseteq  \mathcal{U}^{-1}_\epsilon\frac{1}{\epsilon}\grad_\theta\mathcal{U}_\epsilon$ by taking into account the facts that $\grad$ and $\grad_\theta$ are closed, $\mathcal{U}_\epsilon$ is unitary, and that $[C_c^\infty(\mathbb{R}^d)]^n$ is a core for $\grad$. Similarly, as $C^\infty_{\theta}(Y)$ is a core of $\grad_\theta$ we obtain
 \[
 \mathcal{U}_\epsilon^{-1}\tfrac{1}{\epsilon}\grad_\theta \subseteq  \grad \mathcal{U}_\epsilon^{-1},
 \]
and the assertion follows.
\end{proof}
Proposition \ref{t:guder} implies that $u \in \dom\big(\dive a(\tfrac{\cdot}{\ep})\grad\big)$ solves \eqref{eq:ep} if, and only if, $\mathcal{U}_\ep u \in \dom\big(\dive_\theta  a \grad_\theta \big)$ solves
\begin{equation}
\label{probequiv}
-\tfrac{1}{\ep^2} \dive_\theta a \grad_\theta \mathcal{U}_\ep u + s \mathcal{U}_\ep u = \mathcal{U}_\ep f. 
\end{equation}
For the final step to cast the problem in the form discussed in Section \ref{s:gen}, we introduce the spaces
\begin{equation}\label{Pspace}
P(\theta)\coloneqq e^{\i \langle\theta, \cdot \rangle_{\mathbb{C}^d}}\mathbb{C}^{n \times d} \oplus \big\{ \grad_\theta u \, ; \, u \in [H^1_\theta(Y)\perp e^{\i \langle\theta, \cdot \rangle_{\mathbb{C}^d}}]^n \big\} \quad (\theta \in \Theta),
\end{equation}
where here, and throughout, $e^{i\langle\theta, \cdot \rangle_{\mathbb{C}^d}}\mathbb{C}^{n\times d}$ is the space obtained by multiplying each component of vectors in $\mathbb{C}^{n \times d}$ by $Y\ni y\mapsto e^{i \langle\theta, y \rangle_{\mathbb{C}^d}}$. 

In order to properly establish and formulate the first order perspective we have in mind we first demonstrate that  $\rge(\grad_\theta)$ and $P(\theta)$ are closed. Both results are a consequence of the following standard argument. 

\begin{lemma}\label{t:closedrange} Let $H_0,H_1$ be Hilbert spaces and $B\colon \dom(B)\subseteq H_0\to H_1$ closed. Assume that $B$ is one-to-one and $\dom(B)\hookrightarrow H_0$ is compact. Then, there exists $c>0$ such that
\[
   \|\phi\|_{H_0}\leq c\|B\phi\|_{H_1}.
\]
In particular, $\rge(B)\subseteq H_1$ is closed.
\end{lemma}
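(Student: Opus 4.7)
The plan is to prove the estimate by a standard contradiction argument exploiting the compactness of the embedding together with the closedness and injectivity of $B$, and then to deduce closedness of the range as a direct corollary of the estimate.

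First, I would suppose the estimate fails. Then for each $n\in\mathbb{N}$ there exists $\phi_n\in\dom(B)$ with $\|\phi_n\|_{H_0}=1$ and $\|B\phi_n\|_{H_1}\le 1/n$. Equipping $\dom(B)$ with the graph norm makes it into a Hilbert space (this is precisely the content of $B$ being closed), and the sequence $(\phi_n)_n$ is bounded there since both $\|\phi_n\|_{H_0}$ and $\|B\phi_n\|_{H_1}$ are bounded. By the compactness hypothesis on $\dom(B)\hookrightarrow H_0$, some subsequence $(\phi_{n_k})_k$ converges in $H_0$ to a limit $\phi\in H_0$ with $\|\phi\|_{H_0}=1$.

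Next, I would invoke the closedness of $B$: since $\phi_{n_k}\to\phi$ in $H_0$ and $B\phi_{n_k}\to 0$ in $H_1$, one has $\phi\in\dom(B)$ and $B\phi=0$. Injectivity of $B$ then forces $\phi=0$, contradicting $\|\phi\|_{H_0}=1$. Hence the asserted constant $c>0$ exists.

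For the "In particular" part, I would take any sequence $(B\phi_n)_n$ in $\rge(B)$ converging to some $\psi\in H_1$. The estimate applied to $\phi_n-\phi_m$ yields
\[
\|\phi_n-\phi_m\|_{H_0}\le c\|B\phi_n-B\phi_m\|_{H_1},
\]
so $(\phi_n)_n$ is Cauchy in $H_0$ and converges to some $\phi\in H_0$. Closedness of $B$ gives $\phi\in\dom(B)$ with $B\phi=\psi$, so $\psi\in\rge(B)$.

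There is no real obstacle here: the only subtlety is making sure to read "$\dom(B)\hookrightarrow H_0$ is compact" correctly as saying the embedding of the graph-norm Hilbert space $\dom(B)$ into $H_0$ is compact, which is exactly what makes the boundedness in graph norm translate to a convergent subsequence in $H_0$ used in the contradiction step.
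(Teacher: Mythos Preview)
Your proof is correct and follows essentially the same contradiction argument as the paper: normalise, extract a convergent subsequence via compactness, and use closedness plus injectivity to reach a contradiction. The only cosmetic difference is that the paper phrases the limiting step via weak convergence of $(B\phi_k)$ and lower semicontinuity of the norm, whereas you (more directly) use strong convergence $B\phi_{n_k}\to 0$ together with closedness of $B$; your treatment of the closed-range corollary is also spelled out in more detail than the paper's one-line remark.
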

\begin{proof}
 Assume that the inequality does not hold for any positive constant.  Then, there exists a sequence 
 $(\phi_k)_{k\in\mathbb{N}}$ such that $\|\phi_k\|_{H_0}=1$ and 
 \[
  \|B\phi_k\|_{H_1} < \tfrac{1}{k}\quad (k\in\mathbb{N}).
 \]
 As $(\phi_k)_{k\in\mathbb{N}}$ is bounded in $\dom(B)$, and $\dom(B)\hookrightarrow H_0$ is compact, we deduce that there exists a $H_0$-convergent subsequence of $(\phi_k)_k$ with $(B\phi_k)_k$ weakly converging, which we do not relabel. Let $\phi\coloneqq \lim_{k\to\infty} \phi_k\in H_0$. By passing to the limit  $k\to\infty$, in the inequality $\|B\phi_k\|_{H_1}< 1/k$, we deduce that 
 \[
 \Vert \text{ (weak)-}\lim_{k\to \infty }  B \phi_k \Vert_{H_1} \le \liminf_{k \rightarrow \infty} \Vert B \phi_k \Vert_{H_1} =0,
 \]
and therefore  $\phi\in \dom(B)$ with $B\phi=0$. As $B$ is one-to-one, it follows that  $\phi=0$ which contradicts $\|\phi\|_{H_0}=\lim_{k\to\infty}\|\phi_k\|_{H_0}=1$. Hence, the desired inequality holds.
 
 The fact that the range of $B$ is closed is a straightforward consequence of the now established inequality and the fact that $B$ is closed.
\end{proof}

\begin{proposition}\label{p:clo} Let $\theta\in \Theta = [-\pi,\pi)^d$. Then, the following assertions hold:
\begin{enumerate}[label=(\alph*)]
 \item\label{clo1} $\rge(\grad_\theta)\subseteq [L^2(Y)]^{n\times d}$ is closed,
 \item\label{clo2} $P(\theta)\subseteq [L^2(Y)]^{n\times d}$, introduced in \eqref{Pspace}, is closed.
\end{enumerate}
\end{proposition}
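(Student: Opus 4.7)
The plan is to reduce both (a) and (b) to the closed-range criterion of Lemma \ref{t:closedrange}, applied to $\grad_\theta$ restricted to a subspace on which it is injective. The indispensable analytic input is the compact embedding $H^1_\theta(Y)\hookrightarrow [L^2(Y)]^n$, which follows at once from Rellich's theorem, since $H^1_\theta(Y)$ is, by definition, a closed subspace of $H^1(Y)$.

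For (a) I would first identify $\kar(\grad_\theta)$ on $[H^1_\theta(Y)]^n$: a vanishing gradient forces componentwise-constant $u$, and a non-zero constant is $\theta$-quasi-periodic only when $e^{\i\theta\cdot k}=1$ for all $k\in\Z^d$, which on $\Theta=[-\pi,\pi)^d$ singles out $\theta=0$; hence $\kar(\grad_\theta)=\{0\}$ for $\theta\neq 0$ and $\kar(\grad_{0})=\mathbb{C}^n$. I would then apply Lemma \ref{t:closedrange} to the restriction $B\coloneqq \grad_\theta|_{D(\theta)}$ with $D(\theta)\coloneqq [H^1_\theta(Y)]^n\cap \kar(\grad_\theta)^\perp$, orthogonality taken in $[L^2(Y)]^n$. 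The operator $B$ is closed and injective, $D(\theta)$ embeds compactly in $[L^2(Y)]^n$ by the observation above, and $\rge(B)=\rge(\grad_\theta)$ because adding elements of $\kar(\grad_\theta)$ does not alter the gradient. The lemma then yields (a).

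For (b) I would split $P(\theta)=P_1(\theta)+P_2(\theta)$ with $P_1(\theta)\coloneqq e^{\i\theta\cdot y}\mathbb{C}^{n\times d}$ (finite-dimensional, hence closed) and $P_2(\theta)\coloneqq \grad_\theta\bigl(E(\theta)\bigr)$, where $E(\theta)\coloneqq [H^1_\theta(Y)\perp e^{\i\theta\cdot y}]^n$. The restriction of $\grad_\theta$ to $E(\theta)$ is again injective: for $\theta\neq 0$ this is immediate from (a); for $\theta=0$ any constant $c\in E(0)$ must satisfy $\langle c,1\rangle_{L^2(Y)}=0$, forcing $c=0$. A second application of Lemma \ref{t:closedrange}, with the same compactness and closedness input, gives that $P_2(\theta)$ is closed. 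Finally I would check that $P_1(\theta)\perp P_2(\theta)$ in $[L^2(Y)]^{n\times d}$ via a short integration-by-parts: for $\phi\in E(\theta)$, the $\theta$-quasi-periodicity of $\phi_i$ makes $\phi_i\,\overline{e^{\i\theta\cdot y}}$ genuinely $1$-periodic in each coordinate, so boundary contributions vanish and $\int_Y \partial_j\phi_i\,\overline{e^{\i\theta\cdot y}}\,\rmd y = \i\theta_j\int_Y \phi_i\,\overline{e^{\i\theta\cdot y}}\,\rmd y = 0$. The orthogonal direct sum of two closed subspaces is closed, giving (b).

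The only genuine subtlety is the bookkeeping at $\theta=0$, where $\grad_\theta$ loses injectivity on all of $[H^1_\theta(Y)]^n$ and the constant direction $e^{\i 0\cdot y}=1$ couples the two summands of $P(\theta)$; in both parts this is absorbed by the orthogonality constraint built into the definitions ($\kar(\grad_\theta)^\perp$ in (a) and $\perp e^{\i\theta\cdot y}$ in (b)). Neither the compact embedding nor the boundary-vanishing in the integration-by-parts presents a serious obstacle.
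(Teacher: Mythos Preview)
Your proof is correct and follows essentially the same strategy as the paper: both parts are reduced to Lemma \ref{t:closedrange} applied to $\grad_\theta$ restricted to a subspace on which it is injective, with the compact embedding coming from Rellich's theorem and the closedness of $H^1_\theta(Y)\subseteq H^1(Y)$. The only cosmetic difference is in (b): you reapply the lemma to $\grad_\theta|_{E(\theta)}$ and then establish $P_1(\theta)\perp P_2(\theta)$ to conclude, whereas the paper recycles the Poincar\'e-type inequality already obtained in (a) and simply uses that the sum of a finite-dimensional subspace and a closed subspace is closed, bypassing the orthogonality computation.
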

\begin{proof}
Note that $\rge(\grad_\theta)=\rge(\grad_\theta|_{\kar(\grad_\theta)^{\bot}})$. To establish \ref{clo1} we aim to apply Lemma \ref{t:closedrange} for $B= \grad_\theta|_{\kar(\grad_\theta)^\bot}$, $H_0=[L^2(Y)]^n$ and $H_1=[L^2(Y)]^{n\times d}$.  $B$ is easily shown to be one-to-one and closed. By Rellich's selection theorem $H^1(Y)\hookrightarrow L^2(Y)$ is compact. Hence, since $\dom(B)=H^1_\theta(Y)\cap \kar(\grad_\theta)^\perp\subseteq H^1(Y)$ is closed, we deduce that $\dom(B)\hookrightarrow L^2(Y)$ is compact. Thus, \ref{clo1} follows from Lemma \ref{t:closedrange}. 
 
 In order to prove \ref{clo2}, we observe that $e^{\i \langle\theta, \cdot \rangle_{\mathbb{C}^d}}\mathbb{C}^{n \times d}$ is finite-dimensional. Thus, we are left with proving that 
\[
 \big\{ \grad_\theta u \, ; \, u \in [H^1_\theta(Y)\perp e^{\i \langle\theta, \cdot \rangle_{\mathbb{C}^d}}]^n \big\}
\]
is closed. We demonstrated above that Lemma \ref{t:closedrange} holds for $B= \grad_\theta|_{\kar(\grad_\theta)^\bot}$, $H_0=[L^2(Y)]^n$ and $H_1=[L^2(Y)]^{n\times d}$. Consequently, the inequality in Lemma \ref{t:closedrange} holds and,  to prove the above space is closed, we only need to establish that if $(\phi_k)_{k\in\mathbb{N}}$ is a convergent sequence in $H^1_\theta(Y)$ with limit $\phi\in H^1_\theta(Y)$ satisfying
\[
   \langle \phi_k,e^{\i \langle \theta,\cdot\rangle_{\mathbb{C}^d}}\rangle_{L^2(Y)}=0 \quad (k \in \N),
\]
then  $\phi\perp e^{\i \langle \theta,\cdot\rangle_{\mathbb{C}^d}}$. This is an easy consequence of the fact that $(\phi_k)_{k\in\mathbb{N}}$ strongly converges in $L^2(Y)$. 
\end{proof}

We introduce 
\[ \iota_\theta \colon P(\theta) \hookrightarrow [L^2(Y)]^{n\times d}  \quad(\theta\in \Theta).
\]
By Proposition \ref{p:clo} \ref{clo2} we have that $P(\theta) \subseteq [L^2(Y)]^{n \times d}$ introduced in \eqref{Pspace} is closed. Thus,  $\iota_\theta^*: [L^2(Y)]^{n\times d}  \rightarrow P(\theta)$ is the well-defined adjoint operator and $\pi_{P(\theta)} \coloneqq \iota_\theta \iota_\theta^*$ is the orthogonal projection onto $P(\theta)$. The following result holds.
\begin{proposition}\label{t:thetafirst} Let $\epsilon>0$,  $\theta\in \Theta$, $a \in \mathcal{M}_{n,d}^\#$, $s \in \mathcal{S}_{n,d}^\#$ and $g\in [L^2(Y)]^n$. Then, the following conditions are equivalent:
\begin{enumerate}[label=(\roman*)]
 \item\label{en:fo1}$u\in \dom\big(\dive_\theta a \grad_\theta \big)$ satisfies
 \[
    -\tfrac{1}{\epsilon^2} \dive_\theta  a \grad_\theta u + su=g.
 \] 
 \item\label{en:fo2} $u\in \dom(\grad_\theta)$ and $q\in \dom(\dive_\theta \iota_{\theta})$ satisfy
 \[
    \left[\begin{pmatrix}
      s & 0 \\ 0& (\iota_{\theta}^*a\iota_\theta)^{-1}
    \end{pmatrix}+ \frac{1}{\epsilon}\begin{pmatrix} 0 & - \dive_\theta\iota_\theta \\ 
                                         -\iota_\theta^* \grad_\theta &0 \end{pmatrix}	\right]\begin{pmatrix} u\\ q
                                         \end{pmatrix}=\begin{pmatrix} g\\ 0
                                         \end{pmatrix}
 \]
\end{enumerate} 
\end{proposition}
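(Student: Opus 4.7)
The plan is to follow the classical mixed / first-order reformulation: introduce the flux $q$ as a projection of $\epsilon^{-1} a \grad_\theta u$ onto $P(\theta)$, and verify the equivalence using two structural facts about $P(\theta)$. The first is that $\rge(\grad_\theta) \subseteq P(\theta)$; the second is that $P(\theta)^\perp \subseteq \ker(\dive_\theta)$. Once both are established, the equivalence is a matter of arranging the pieces.

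For $\rge(\grad_\theta) \subseteq P(\theta)$, decompose any $\phi \in [H^1_\theta(Y)]^n$ component-wise as $\phi_j = v_j + c_j e^{\i \langle \theta, \cdot \rangle_{\mathbb{C}^d}}$ with $v_j \in H^1_\theta(Y) \perp e^{\i \langle \theta, \cdot \rangle_{\mathbb{C}^d}}$ and $c_j \in \mathbb{C}$. Then $\grad_\theta \phi$ splits as $\grad_\theta v$ (lying in the second summand of $P(\theta)$) plus $e^{\i \langle \theta, \cdot \rangle_{\mathbb{C}^d}} (\i\, c \otimes \theta)$ (lying in the first summand). For $P(\theta)^\perp \subseteq \ker(\dive_\theta)$, take $h \in P(\theta)^\perp$; since $\grad_\theta \phi \in P(\theta)$ for every $\phi \in [H^1_\theta(Y)]^n$, one has $\langle h, \grad_\theta \phi \rangle = 0$, so $h \in \dom(\grad_\theta^*) = \dom(\dive_\theta)$ with $\dive_\theta h = 0$.

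With these facts, the implication \ref{en:fo1} $\Rightarrow$ \ref{en:fo2} proceeds by setting $q \coloneqq \epsilon^{-1} \iota_\theta^* a \grad_\theta u$. The second row of the first-order system, namely $(\iota_\theta^* a \iota_\theta)^{-1} q = \epsilon^{-1} \iota_\theta^* \grad_\theta u$, is verified by applying $\iota_\theta^* a \iota_\theta$ to both sides and using that $\grad_\theta u \in P(\theta)$ implies $\iota_\theta \iota_\theta^* \grad_\theta u = \grad_\theta u$. For the first row, write $a \grad_\theta u = \pi_{P(\theta)} a \grad_\theta u + (1 - \pi_{P(\theta)}) a \grad_\theta u$; the latter piece lies in $P(\theta)^\perp \subseteq \ker(\dive_\theta)$, so $\dive_\theta a \grad_\theta u = \dive_\theta \pi_{P(\theta)} a \grad_\theta u = \epsilon \dive_\theta \iota_\theta q$. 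This simultaneously shows $q \in \dom(\dive_\theta \iota_\theta)$ and, upon substitution into the original equation, delivers the first row of \ref{en:fo2}. The reverse implication runs in parallel: the second equation forces $q = \epsilon^{-1} \iota_\theta^* a \grad_\theta u$ (again via $\grad_\theta u \in P(\theta)$), the same splitting yields $a \grad_\theta u \in \dom(\dive_\theta)$ with $\dive_\theta a \grad_\theta u = \epsilon \dive_\theta \iota_\theta q$, and the first equation collapses to \ref{en:fo1}.

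I expect the main obstacle to be the careful bookkeeping of domains, specifically ensuring that $\iota_\theta q \in \dom(\dive_\theta)$ is equivalent to $a \grad_\theta u \in \dom(\dive_\theta)$. This equivalence hinges on the structural fact $P(\theta)^\perp \subseteq \ker(\dive_\theta)$, which in turn rests on the precise definition of $P(\theta)$ as the sum of $e^{\i \langle \theta, \cdot\rangle_{\mathbb{C}^d}} \mathbb{C}^{n\times d}$ and $\grad_\theta$-images of functions $\perp e^{\i \langle \theta, \cdot\rangle_{\mathbb{C}^d}}$ — the two summands being exactly what is needed to absorb the full range of $\grad_\theta$ on $[H^1_\theta(Y)]^n$ via the orthogonal splitting $H^1_\theta(Y) = [H^1_\theta(Y)\perp e^{\i\langle\theta,\cdot\rangle_{\mathbb{C}^d}}]\oplus \mathbb{C} e^{\i\langle\theta,\cdot\rangle_{\mathbb{C}^d}}$.
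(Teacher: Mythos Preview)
Your proposal is correct and follows essentially the same approach as the paper: both establish the two structural facts $\rge(\grad_\theta)\subseteq P(\theta)$ and $P(\theta)^\perp\subseteq\ker(\dive_\theta)$, define $q=\epsilon^{-1}\iota_\theta^* a\grad_\theta u$ (the paper writes this as $\epsilon^{-1}\iota_\theta^* a\iota_\theta\iota_\theta^*\grad_\theta u$, which is the same thing once $\grad_\theta u\in P(\theta)$ is known), and then read off both implications. The only minor difference is that the paper verifies $\rge(\grad_\theta)\subseteq P(\theta)$ via the Fourier basis $\{e^{\i\langle\theta+2\pi z,\cdot\rangle}\}_{z\in\mathbb{Z}^d}$, whereas you use the direct orthogonal splitting $\phi=v+ce^{\i\langle\theta,\cdot\rangle}$; your route is slightly more elementary and avoids the case distinction $\theta=0$ versus $\theta\neq 0$.
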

\begin{proof}
 Before we prove the equivalence, we note that
 \begin{equation}\label{eq:fo}
    \dive_\theta  a \grad_\theta=\dive_\theta\iota_\theta\iota_\theta^* a\iota_\theta\iota_\theta^* \grad_\theta, \quad (\theta \in \Theta).
 \end{equation}
Indeed, note that $\rge(\grad_\theta)\subseteq P(\theta)$: This is obvious for $\theta = 0$, so let us consider $\theta \neq 0$. Since $\{ e^{\i \langle( \theta + 2\pi z), \cdot \rangle_{\mathbb{C}^d}}\}_{z \in \mathbb{Z}^d}$ forms a complete orthonormal system for $L^2(Y)$, then 
 \[
 u(y) = \sum_{z \in \mathbb{Z}^d} c^{(z)} e^{\i \langle(\theta +2\pi z),y\rangle} \quad (y \in Y, c^{(z)} \in \mathbb{C}^n),
 \]
 and 
 \[
 \grad_\theta u(y)= \sum_{z \in \mathbb{Z}^d} c^{(z)} \otimes ( \i \theta + 2\pi z) e^{\i \langle(\theta +2\pi z),y\rangle} = c^{(0)} \otimes \i \theta e^{\i \langle\theta, y\rangle} + \grad_\theta v(y) , \quad (y \in Y, c^{(0)} \in \mathbb{C}^n),
 \]
for some $v \in [H^1_\theta(Y)\perp e^{\i \langle\theta, \cdot \rangle_{\mathbb{C}^d}}]^n$.

\noindent  Next, as $\kar(\dive_\theta)^\bot=\rge(\grad_\theta)\subseteq P(\theta)$, we obtain $P(\theta)^\bot\subseteq \kar(\dive_\theta)$. In particular, we infer
 \[
    \iota_\theta\iota_\theta^* \grad_\theta=\grad_\theta\text{ and }\dive_\theta(1-\iota_\theta\iota_\theta^*)=0.
 \]
 Hence, \eqref{eq:fo} follows.
 
 For \ref{en:fo1}$\Rightarrow$\ref{en:fo2}, we set $q\coloneqq \tfrac{1}{\epsilon}\iota_\theta^* a\iota_\theta\iota_\theta^* \grad_\theta u$. Then \ref{en:fo2} follows from \eqref{eq:fo}. Note that, for $a \in \mathcal{M}_{n,d}^{\#}$, $\iota_\theta^* a\iota_\theta$ is continuously invertible. Indeed, multiplication with $a$ can be identified as an operator in $L(L^2(Y)^{n\times d})$. Moreover, as  $a \in \mathcal{M}_{n,d}^{\#}$ then   $\Re a\geq \nu 1_{[L^2(Y)]^{n\times d}}$ and consequently  $\Re\iota_\theta^*a\iota_\theta\geq \nu 1_{P(\theta)}$ for some $\nu>0$. This yields the continuous invertibility of $\iota_\theta^*a\iota_\theta$. 
 
 The implication \ref{en:fo2}$\Rightarrow$\ref{en:fo1} also follows from \eqref{eq:fo}. Note that $u\in \dom(\dive_\theta a \grad_\theta)$ follows from the fact that $q\in \dom(\dive_\theta \iota_\theta)$, $u\in \dom(\iota_\theta^* \grad_\theta)$ and the second row of the system \ref{en:fo2}.
\end{proof}

Now, we aim to apply Theorem \ref{t:mtgrtheta} to the system \ref{en:fo2} in Proposition \ref{t:thetafirst}. For this, we use the following setting:
\begin{alignat}{1}\label{block:ell}
\begin{aligned}
   H_0 &= L^2(\Theta\times Y)^{2n + d}, &   \Theta &= [-\pi,\pi)^d,
   \\ H_\theta & = [L^2(Y)]^n\oplus P(\theta), &  H & = [L^2(Y)]^n\oplus [L^2(Y)]^{n\times d},
   \\ M(\theta) & = \begin{pmatrix}
      s & 0 \\ 0& (\iota_{\theta}^*a\iota_\theta)^{-1}
    \end{pmatrix}, &  A(\theta) & = \begin{pmatrix} 0 & -\dive_\theta \iota_\theta \\ 
                                         -\iota_\theta^* \grad_\theta &0 \end{pmatrix},
    \\ N(\theta) & = e^{\i \langle\theta, \cdot \rangle_{\mathbb{C}^d}}\mathbb{C}^n \oplus e^{\i \langle\theta, \cdot \rangle_{\mathbb{C}^d}}\mathbb{C}^{n \times d}.
\end{aligned}
\end{alignat}
We also set
\begin{equation}\label{eq:n12}
  N_1(\theta)\coloneqq e^{\i \langle\theta, \cdot \rangle_{\mathbb{C}^d}} \mathbb{C}^n,\text{ and }N_2(\theta)\coloneqq e^{\i \langle\theta, \cdot \rangle_{\mathbb{C}^d}}\mathbb{C}^{n \times d} \end{equation}

The following result holds.

\begin{theorem}\label{t:hypasse} With the setting \eqref{block:ell}, Hypothesis \ref{hyp:grtheta} holds.
 \end{theorem}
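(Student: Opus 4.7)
The plan is to verify the four conditions of Hypothesis \ref{hyp:grtheta} in turn, using the Fourier basis $\{e^{\i (\theta + 2\pi z) \cdot y}\}_{z \in \mathbb{Z}^d}$ of $L^2(Y)$ throughout.

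For \ref{en:grt1}, I exploit the off-diagonal block structure of $A(\theta)$: since $\dive_\theta = -\grad_\theta^*$ by Definition \ref{def.dops}, a block-wise adjoint computation gives $(-\iota_\theta^* \grad_\theta)^* = -\grad_\theta^* \iota_\theta = \dive_\theta \iota_\theta$, whence $A(\theta)^* = -A(\theta)$. For \ref{en:grt2}, $\Re s \geq \nu_s$ is built into the definition of $\mathcal{S}_{n,d}^\#$; for the $(2,2)$-block, $\Re a \geq \nu_a$ forces $\Re \iota_\theta^* a \iota_\theta \geq \nu_a$, and the identity $\Re B^{-1} = B^{-*}(\Re B) B^{-1}$ then yields $\Re (\iota_\theta^* a \iota_\theta)^{-1} \geq \nu_a / \|a\|_\infty^2$, so $c = \min\{\nu_s, \nu_a/\|a\|_\infty^2\}$ works uniformly in $\theta$.

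The heart of the proof is condition \ref{en:grt3}, i.e.\ Hypothesis \ref{hyp:gr}. For \ref{gr1} and \ref{gr2} I compute directly: $A(\theta)(e^{\i \theta \cdot y} c, 0) = (0, -\iota_\theta^*[(\i\theta \otimes c) e^{\i \theta \cdot y}])$, which lies in $\{0\} \oplus N_2(\theta)$ (because $N_2(\theta) \subseteq P(\theta)$) with norm controlled by $|\theta||c|$; an analogous computation on $\{0\} \oplus N_2(\theta)$, via integration by parts against $\theta$-quasi-periodic test functions, gives both boundedness of $A(\theta)\pi_{N(\theta)}$ and the invariance $A(\theta) N(\theta) \subseteq N(\theta)$, which by Lemma \ref{l:Athetainv} is equivalent to \ref{gr2}. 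The key step is \ref{gr3}: every $(u, q) \in R(\theta) \cap \dom A(\theta)$ admits expansions
\[
u = \sum_{z \neq 0} c^{(z)} e^{\i (\theta + 2\pi z) \cdot y}, \quad q = \grad_\theta v \text{ with } v = \sum_{z \neq 0} d^{(z)} e^{\i (\theta + 2\pi z) \cdot y},
\]
where the representation of $q$ follows from $q \in P(\theta) \cap N_2(\theta)^\perp$ together with the orthogonality of the decomposition \eqref{Pspace}, and the vanishing of the $z=0$ modes reflects $u \perp N_1(\theta)$ and $v \perp e^{\i \theta \cdot y}$. Since $|\theta + 2\pi z| \geq \pi$ for $z \neq 0$ and $\theta \in [-\pi, \pi)^d$, Parseval yields $\|\iota_\theta^* \grad_\theta u\|^2 = \|\grad_\theta u\|^2 \geq \pi^2 \|u\|^2$ and $\|\dive_\theta \iota_\theta q\|^2 = \|\dive_\theta \grad_\theta v\|^2 \geq \pi^2 \|\grad_\theta v\|^2 = \pi^2 \|q\|^2$, giving $C_R \leq 1/\pi$.

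For \ref{en:grt5}, the same Fourier decomposition produces mode-by-mode (rational) formulas for $(M(\theta) + \epsilon^{-1} A(\theta))^{-1}$ whose matrix coefficients are continuous in $\theta$, from which weak measurability is immediate. I expect the main obstacle to be \ref{gr3}: one must first confirm that the orthogonal decomposition \eqref{Pspace} really forces every $q \in P(\theta) \cap N_2(\theta)^\perp$ to be of the form $\grad_\theta v$ with $v \perp e^{\i \theta \cdot y}$, after which the uniform spectral-gap estimate $|\theta + 2\pi z| \geq \pi$ does the rest.
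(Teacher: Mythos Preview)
Your verification of conditions \ref{en:grt1}--\ref{en:grt3} is correct and coincides with the paper's approach: the skew-selfadjointness of $A(\theta)$ via $\dive_\theta\iota_\theta=-(\iota_\theta^*\grad_\theta)^*$, the uniform lower bound on $\Re M(\theta)$ via $\Re B^{-1}=B^{-*}(\Re B)B^{-1}$, and the Fourier-series argument yielding $|\theta+2\pi z|\geq\pi$ for $z\neq 0$ are exactly what the paper does in Propositions \ref{p:hypasse} and \ref{p:hypasse3}. (One small point: the invariance $A(\theta)N(\theta)\subseteq N(\theta)$ together with skew-selfadjointness and boundedness of $A(\theta)|_{N(\theta)}$ does yield $\pi_{N(\theta)}A(\theta)\subseteq A(\theta)\pi_{N(\theta)}$, but Lemma \ref{l:Athetainv} is only the second step; you are implicitly using that a skew-selfadjoint operator with an invariant subspace also leaves the orthogonal complement invariant.)

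The genuine gap is in \ref{en:grt5}. Your claim that the Fourier decomposition produces ``mode-by-mode (rational) formulas'' for $(M(\theta)+\tfrac{1}{\epsilon}A(\theta))^{-1}$ is false for general $a\in\mathcal{M}_{n,d}^\#$ and $s\in\mathcal{S}_{n,d}^\#$: the basis $\{e^{\i\langle\theta+2\pi z,\cdot\rangle}\}_z$ diagonalises $A(\theta)$, but multiplication by a non-constant $s$ or $a$ (and hence $(\iota_\theta^*a\iota_\theta)^{-1}$) couples all Fourier modes. There is no explicit fibre-wise inversion formula to read continuity off from. The paper instead proves weak \emph{continuity} of $\theta\mapsto\tilde\iota_\theta(M(\theta)+\tfrac{1}{\epsilon}A(\theta))^{-1}\tilde\iota_\theta^*$ by a soft argument (Theorem \ref{t:wm}): one takes $\theta_k\to\theta$, uses the uniform resolvent bound $1/c$ to extract weakly convergent subsequences of $(u_k,q_k)=T(\theta_k)(f,g)$, and then passes to the limit in the defining equations. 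The nontrivial ingredient here is Proposition \ref{p:athetacont}, which shows $\pi_{P(\theta_k)}q_k\rightharpoonup\pi_{P(\theta)}q$, $\grad_{\theta_k}u_k\rightharpoonup\grad_\theta u$ and $\dive_{\theta_k}q_k\rightharpoonup\dive_\theta q$ under the appropriate boundedness assumptions; this is where the explicit Fourier representation of $\pi_{P(\theta)}$ (Proposition \ref{p:proj}) is actually used. Your outline needs to replace the incorrect ``mode-by-mode'' claim with this weak-limit argument.
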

 We begin with verifying the conditions \ref{en:grt1} and \ref{en:grt2} of Hypothesis \ref{hyp:grtheta} as well as \ref{gr1} and \ref{gr2} of Hypothesis \ref{hyp:gr}.
 \begin{proposition}\label{p:hypasse} Assume the setting \eqref{block:ell}. For each $\theta\in \Theta$, the following statements hold:
 	\begin{enumerate}
 		\item $A(\theta)$ is skew-selfadjoint;
 		\item $\Re M(\theta)\geq \nu/(\|a\|^2+1)$, where $\nu>0$ is such that $\Re a\geq \nu 1_{\mathbb{C}^{n \times d}}$ and $\Re s\geq \nu 1_{\mathbb{C}^{n}}$;
 		\item $\pi_{N(\theta)}A(\theta)\subseteq A(\theta)\pi_{N(\theta)}$;
 		\item $A(\theta)\pi_{N(\theta)}$ is bounded.
 	\end{enumerate}  
 \end{proposition}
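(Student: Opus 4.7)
I would tackle the four assertions in the order (1), (2), (4), (3); postponing (3) is convenient because (4) supplies $N(\theta) \subseteq \dom(A(\theta))$, which makes the operator identity in (3) meaningful on its full domain.

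For (1), $A(\theta)$ has the off-diagonal block form $\left(\begin{smallmatrix} 0 & B(\theta)^* \\ -B(\theta) & 0 \end{smallmatrix}\right)$ on $[L^2(Y)]^n \oplus P(\theta)$ with $B(\theta) \coloneqq \iota_\theta^* \grad_\theta$. Since $\grad_\theta$ is closed and densely defined and $\iota_\theta^*$ is bounded, $B(\theta)$ is closed and densely defined, and $B(\theta)^* = \grad_\theta^* \iota_\theta = -\dive_\theta \iota_\theta$. The standard fact that $\left(\begin{smallmatrix} 0 & C^* \\ -C & 0 \end{smallmatrix}\right)$ is skew-selfadjoint whenever $C$ is closed and densely defined then yields skew-selfadjointness of $A(\theta)$.

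For (2), the compression $T(\theta) \coloneqq \iota_\theta^* a \iota_\theta$ inherits $\Re T(\theta) \geq \nu \, 1_{P(\theta)}$ from $\Re a \geq \nu \, 1_{\mathbb{C}^{n \times d}}$, while $\|T(\theta)\| \leq \|a\|$. Setting $y \coloneqq T(\theta)^{-1} x$ yields
\[
\Re \langle T(\theta)^{-1} x, x \rangle = \Re \langle y, T(\theta) y \rangle \geq \nu \|y\|^2 \geq \nu \|T(\theta) y\|^2 / \|a\|^2 = \nu \|x\|^2 / \|a\|^2,
\]
so $\Re (\iota_\theta^* a \iota_\theta)^{-1} \geq \nu / \|a\|^2 \geq \nu / (\|a\|^2 + 1)$, which combined with $\Re s \geq \nu \geq \nu / (\|a\|^2 + 1)$ yields the desired block-diagonal bound. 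For (4), I would compute on the quasi-periodic basis: $\grad_\theta (e^{\i \langle \theta, \cdot \rangle} c) = c \otimes \i \theta \, e^{\i \langle \theta, \cdot \rangle} \in N_2(\theta) \subseteq P(\theta)$, so $\iota_\theta^* \grad_\theta (e^{\i \langle \theta, \cdot \rangle} c) = c \otimes \i \theta \, e^{\i \langle \theta, \cdot \rangle}$, while integration by parts gives $\dive_\theta \iota_\theta (e^{\i \langle \theta, \cdot \rangle} A) = (A \i \theta) e^{\i \langle \theta, \cdot \rangle} \in N_1(\theta)$. Thus $N(\theta) \subseteq \dom(A(\theta))$ with $\|A(\theta)|_{N(\theta)}\| \leq |\theta| \leq \pi \sqrt{d}$, so $A(\theta) \pi_{N(\theta)}$ is bounded uniformly in $\theta$.

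For (3), expand $(v, q) \in \dom(A(\theta))$ in the orthonormal quasi-periodic Fourier basis $\{e^{\i \langle \theta + 2\pi z, \cdot \rangle}\}_{z \in \mathbb{Z}^d}$ of $L^2(Y)$: write $v = \sum_z c^{(z)} e^{\i \langle \theta + 2\pi z, \cdot \rangle}$, and using $\rge(\grad_\theta) \subseteq P(\theta)$ (established in the proof of Proposition~\ref{t:thetafirst}), decompose $q = e^{\i \langle \theta, \cdot \rangle} A + \grad_\theta w$ with $w \in [H^1_\theta(Y) \perp e^{\i \langle \theta, \cdot \rangle}]^n$; then $\pi_{N_1(\theta)} v = c^{(0)} e^{\i \langle \theta, \cdot \rangle}$ and $\pi_{N_2(\theta)} q = e^{\i \langle \theta, \cdot \rangle} A$. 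The computations from (4) then give
\[
\iota_\theta^* \grad_\theta \pi_{N_1(\theta)} v = c^{(0)} \otimes \i \theta \, e^{\i \langle \theta, \cdot \rangle} = \pi_{N_2(\theta)} \iota_\theta^* \grad_\theta v
\]
and $\dive_\theta \iota_\theta \pi_{N_2(\theta)} q = (A \i \theta) e^{\i \langle \theta, \cdot \rangle} = \pi_{N_1(\theta)} \dive_\theta \iota_\theta q$, the latter because $w$ has no $z = 0$ Fourier mode, whence $\dive_\theta \grad_\theta w \perp N_1(\theta)$. Assembling the two block components yields $\pi_{N(\theta)} A(\theta) \subseteq A(\theta) \pi_{N(\theta)}$. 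The main technical care, as expected, lies in (3): one must ensure the inclusion holds on all of $\dom(A(\theta))$ rather than merely on a dense subspace. Having (4) in hand makes $A(\theta) \pi_{N(\theta)} u$ automatically well-defined, and the Fourier expansion then proceeds cleanly.
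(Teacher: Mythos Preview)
Your proof is correct and follows essentially the same approach as the paper. For (1) and (2) the arguments coincide; for (3) you spell out via the Fourier basis what the paper dismisses as ``easy to see upon the decomposition $[L^2(Y)]^n = e^{\i \langle\theta, \cdot\rangle_{\mathbb{C}^d}}\mathbb{C}^n\oplus [L^2(Y)\perp e^{\i \langle\theta, \cdot\rangle_{\mathbb{C}^d}}]^n$''; for (4) the paper invokes only the finite-dimensionality of $N(\theta)$ (any linear operator restricted to a finite-dimensional subspace of its domain is bounded), whereas your explicit computation additionally yields the uniform bound $\|A(\theta)\pi_{N(\theta)}\| \leq \pi\sqrt{d}$, which is not needed for the proposition but is pleasant to have.
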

 \begin{proof}
 	The first assertion follows from the fact that $\dive_\theta \iota_\theta=- \big(\iota_\theta^* \grad_\theta\big)^*$. For the second statement, we observe that $\Re s\geq \nu1_{\mathbb{C}^{n}}\geq \big(\nu/(\|a\|^2+1)\big)1_{\mathbb{C}^{n}}$. Moreover, note that $\Re a\geq \nu 1_{\mathbb{C}^{n\times d}}$ implies $\iota_\theta^*a\iota_\theta\geq \nu 1_{P(\theta)}$ and, thus,  
 	$$
 	\Re (\iota_\theta^*a\iota_\theta)^{-1} \geq \big(\nu/(\|\iota_\theta^* a\iota_\theta\|^2)\big)1_{P(\theta)}\geq \big(\nu/(\|a\|^2+1)\big)1_{P(\theta)}.
 	$$ The third assertion is easy to see upon the decomposition $[L^2(Y)]^n = e^{\i \langle\theta, \cdot\rangle_{\mathbb{C}^d}}\mathbb{C}^n\oplus [L^2(Y)\perp e^{\i \langle\theta, \cdot\rangle_{\mathbb{C}^d}}]^n $. The fourth assertion is a consequence of the above decomposition of $[L^2(Y)]^n$ and the finite dimensionality of $N(\theta)$.
 \end{proof}
\begin{proof}[Proof of Theorem \ref{t:hypasse} -- Part 1]
  The assertions \ref{en:grt1} and \ref{en:grt2} of Hypothesis \ref{hyp:grtheta} and \ref{gr1} of Hypothesis \ref{hyp:gr}  clearly follow from Proposition \ref{p:hypasse}. Assertion Hypothesis \ref{hyp:gr} \ref{gr2} follows from Proposition \ref{p:hypasse} (c) and Lemma \ref{l:Athetainv} upon setting $H=H_\theta$, $U=N(\theta)$ and $A=A(\theta)$.
\end{proof}

We now turn to complete the proof of \ref{en:grt3} of Hypothesis \ref{hyp:grtheta}, which results from a quantified version of Proposition \ref{p:clo} (see also Lemma \ref{t:closedrange}).

\begin{proposition}\label{p:hypasse3}Assume the setting \eqref{block:ell}. Then, the following assertions hold.
	\begin{enumerate}[label=(\alph*)]
		\item\label{en:h31} For all $\theta\in \Theta$ and $u\in [H^1_\theta(Y)\perp e^{\i\langle\theta, \cdot\rangle_{\mathbb{C}^d}}]^n$ we have
		\[
		\|u\|_{[L^2(Y)]^n}\leq \pi^{-1} \|\grad_\theta u\|_{[L^2(Y)]^{n \times d}}.
		\]
		\item\label{en:h31.5} For all $\theta\in \Theta$, we have \[R(\theta)=N(\theta)^{\bot}= (e^{\i \langle\theta, \cdot\rangle_{\mathbb{C}^d}}\mathbb{C}^{n})^\bot\oplus \{\grad_\theta u; u \in [H^1_\theta(Y) \perp e^{\i \langle\theta, \cdot\rangle_{\mathbb{C}^d}}]^n \}.\]
		\item\label{en:h32} Let $\iota_{R(\theta)}: R(\theta) \hookrightarrow H_\theta$ be the canonical embedding. For all $\theta\in \Theta$, the operator $\iota_{R(\theta)}^*A(\theta)\iota_{R(\theta)}$ is continuously invertible and
		\[
		\sup_{\theta\in \Theta} \left\| (\iota_{R(\theta)}^*A(\theta)\iota_{R(\theta)})^{-1} \right\|\leq \pi^{-1}.
		\]
	\end{enumerate}
\end{proposition}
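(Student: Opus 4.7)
The plan is to establish the three parts in order, using a single Fourier-series calculation on $Y=(0,1)^d$ to drive all of them. The starting point is the orthonormal basis $\{e^{\i\langle\theta+2\pi z,\cdot\rangle_{\mathbb{C}^d}}\}_{z\in\mathbb{Z}^d}$ of $L^2_\theta(\mathbb{R}^d)\cong L^2(Y)$, already used in the proof of Proposition~\ref{t:thetafirst}.

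For part~\ref{en:h31}, I would expand any $u \in [H^1_\theta(Y)]^n$ as
\[
u(y) = \sum_{z \in \mathbb{Z}^d} c^{(z)} e^{\i \langle \theta + 2\pi z, y\rangle_{\mathbb{C}^d}}, \qquad c^{(z)} \in \mathbb{C}^n,
\]
so that Parseval yields $\|u\|_{[L^2(Y)]^n}^2 = \sum_z |c^{(z)}|^2$ and $\|\grad_\theta u\|_{[L^2(Y)]^{n\times d}}^2 = \sum_z |c^{(z)}|^2\,|\theta+2\pi z|^2$. The orthogonality $u \perp e^{\i \langle\theta,\cdot\rangle_{\mathbb{C}^d}}$ componentwise forces $c^{(0)}=0$. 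The decisive elementary observation is that for $\theta\in[-\pi,\pi)^d$ and $z\in\mathbb{Z}^d\setminus\{0\}$ some component $z_j$ is nonzero, and then $|\theta_j+2\pi z_j| \geq 2\pi|z_j|-|\theta_j|\geq \pi$, whence $|\theta+2\pi z|\geq \pi$. Substituting this lower bound into the Parseval identity for $\|\grad_\theta u\|^2$ gives the claim.

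For part~\ref{en:h31.5} the key point is to verify that the two summands in the definition~\eqref{Pspace} of $P(\theta)$ are actually orthogonal in $[L^2(Y)]^{n\times d}$. This is immediate from the same Fourier expansion: elements of $e^{\i\langle\theta,\cdot\rangle_{\mathbb{C}^d}}\mathbb{C}^{n\times d}$ involve only the $z=0$ mode, while any $\grad_\theta u$ with $u \perp e^{\i\langle\theta,\cdot\rangle_{\mathbb{C}^d}}$ has vanishing $z=0$ coefficient. Combined with the orthogonal block structure $H_\theta = [L^2(Y)]^n \oplus P(\theta)$ and $N(\theta)=N_1(\theta)\oplus N_2(\theta)$ from~\eqref{eq:n12}, the claimed decomposition of $R(\theta)=N(\theta)^\perp$ is read off directly.

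For part~\ref{en:h32} I would construct $\bigl(\iota_{R(\theta)}^* A(\theta)\iota_{R(\theta)}\bigr)^{-1}$ explicitly in block form. Given $(f,g)\in R(\theta)$ as described by part~\ref{en:h31.5}, there is a unique $v\in [H^1_\theta(Y)\perp e^{\i\langle\theta,\cdot\rangle_{\mathbb{C}^d}}]^n$ with $g=\grad_\theta v$; setting $u\coloneqq -v$ solves $-\iota_\theta^*\grad_\theta u=g$, and part~\ref{en:h31} yields $\|u\|\leq \pi^{-1}\|g\|$. To invert the first row $-\dive_\theta\iota_\theta q=f$, I would expand $f=\sum_{z\neq 0}f^{(z)}e^{\i\langle\theta+2\pi z,\cdot\rangle_{\mathbb{C}^d}}$ and set $q\coloneqq \grad_\theta w\in P(\theta)$, where
\[
w \coloneqq \sum_{z\neq 0} \frac{1}{|\theta+2\pi z|^2}\, f^{(z)}\, e^{\i\langle\theta+2\pi z,\cdot\rangle_{\mathbb{C}^d}};
\]
the bound $|\theta+2\pi z|\geq \pi$ shows $w\in [H^1_\theta(Y)\perp e^{\i\langle\theta,\cdot\rangle_{\mathbb{C}^d}}]^n$ and $\|q\|^2=\sum_{z\neq 0}|f^{(z)}|^2/|\theta+2\pi z|^2\leq \pi^{-2}\|f\|^2$. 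Since $(u,q)\in R(\theta)$ by construction, adding the two diagonal estimates gives the uniform bound $\pi^{-1}$ on the inverse.

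The main obstacle, modest as it is, is confirming the orthogonality of the two summands of $P(\theta)$ in part~\ref{en:h31.5}, so that $R(\theta)$ decomposes correctly into a ``$[L^2(Y)]^n$'' component and a ``pure-gradient'' component and the inverse in part~\ref{en:h32} can be built block by block; once this Fourier-analytic bookkeeping is settled, parts~\ref{en:h31} and \ref{en:h32} reduce to the sharp elementary lower bound $|\theta+2\pi z|\geq \pi$ for $\theta\in[-\pi,\pi)^d$, $z\in\mathbb{Z}^d\setminus\{0\}$.
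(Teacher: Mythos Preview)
Your proposal is correct and for parts~\ref{en:h31} and~\ref{en:h31.5} matches the paper's argument essentially line for line: both expand in the orthonormal system $\{e^{\i\langle\theta+2\pi z,\cdot\rangle}\}_{z\in\mathbb{Z}^d}$, use the orthogonality hypothesis to kill the $z=0$ mode, and invoke the sharp bound $|\theta+2\pi z|\geq\pi$ for $z\neq 0$.

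For part~\ref{en:h32} your route diverges slightly from the paper's. You construct the inverse of $\iota_{R(\theta)}^*A(\theta)\iota_{R(\theta)}$ explicitly, inverting each off-diagonal block by hand via Fourier series (in effect solving $-\Delta_\theta w=f$ for the second block). The paper instead observes that the block $\iota_{R_2(\theta)}^*\iota_\theta^*\grad_\theta\iota_{R_1(\theta)}$ is a bijection from $R_1(\theta)\cap[H^1_\theta(Y)]^n$ onto $R_2(\theta)$ directly by part~\ref{en:h31} and the definition of $R_2(\theta)$, and then gets bijectivity and the norm bound for the other block for free by noting that it is (minus) the adjoint. Your explicit construction has the virtue of being entirely self-contained and makes the domain issues transparent; the paper's adjoint argument is shorter and avoids writing down the Laplacian inverse. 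Both yield the same constant $\pi^{-1}$.
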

\begin{proof} 
To prove \ref{en:h31}, we argue, as in the proof of Proposition \ref{t:thetafirst}, that $\{  e^{\i \langle\theta+2\pi z, \cdot\rangle_{\mathbb{C}^d}} \}_{z \in \mathbb{Z}^d}$ is an orthonormal basis for $L^2(Y)$ and utilising the fact that $u_i \perp e^{\i \langle\theta, \cdot\rangle_{\mathbb{C}^d}}$, $i\in\{1,\ldots,n\}$, one has
\[
\begin{aligned}
u = \sum_{\substack{z \in \mathbb{Z}^d \\ z \neq 0}} c^{(z)} e^{\i \langle\theta+2\pi z, \cdot\rangle_{\mathbb{C}^d}}, & \quad & \grad_\theta u = \sum_{\substack{z \in \mathbb{Z}^d \\ z \neq 0}}  e^{\i \langle\theta+2\pi z, \cdot\rangle_{\mathbb{C}^d}} c^{(z)}\otimes \i(\theta + 2\pi z ), \quad c^{(z)} \in \mathbb{C}^n.
\end{aligned}
\]
Then 
\[
\Vert \grad_\theta u \Vert_{[L^2(Y)]^{n\times d}}^2 = \sum_{\substack{z \in \mathbb{Z}^d \\ z \neq 0}}	\Vert c^{(z)}\otimes \i(\theta + 2\pi z ) \Vert_{\mathbb{C}^{n \times d}}^2 \ge \pi^2 \sum_{\substack{z \in \mathbb{Z}^d \\ z \neq 0}}	\Vert c^{(z)} \Vert_{\mathbb{C}^{n}}^2
 =\pi^2\Vert u \Vert_{[L^2(Y)]^n}^2.
\]
 The statement in \ref{en:h31.5} immediately follows from the definition of $P(\theta)$, see \eqref{Pspace}.
	For the proof of statement \ref{en:h32}, we set
\[
\begin{aligned}
R_1(\theta)\coloneqq (e^{\i \langle\theta, \cdot\rangle_{\mathbb{C}^d}}\mathbb{C}^n)^\perp, &  \quad \text{ and } & R_2(\theta)\coloneqq \{\grad_\theta u;u\in [H_\theta^1(Y)\bot e^{\i \langle\theta, \cdot\rangle_{\mathbb{C}^d}}]^n\}.
\end{aligned}
\]
Thus, for the canonical embeddings $\iota_{R_1(\theta)} : R_1(\theta) \hookrightarrow [L^2(Y)]^n$, $\iota_{R_2(\theta)}: R_2(\theta) \hookrightarrow P(\theta)$, we obtain
	\[
	\iota_{R(\theta)} = \begin{pmatrix} \iota_{R_1(\theta)} & 0 \\ 0& \iota_{R_2(\theta)} \end{pmatrix},
	\] 
and
	\begin{align*}
	\iota_{R(\theta)}^* A(\theta) \iota_{R(\theta)} & 
	 =\begin{pmatrix} 0 & -\iota_{R_1(\theta)}^*\dive_\theta\iota_\theta\iota_{R_2(\theta)} \\ 
	-\iota_{R_2(\theta)}^*\iota_\theta^*\grad_\theta\iota_{R_1(\theta)} &0 \end{pmatrix}.
	\end{align*}
	Next, we observe that $\iota_{R_2(\theta)}^*\iota_\theta^*$ projects onto $R_2(\theta)$. By \ref{en:h31} it follows that  $\grad_\theta\iota_{R_1(\theta)}$ is one-to-one, and therefore we obtain that $\iota_{R_2(\theta)}^*\iota_\theta^*\grad_\theta\iota_{R_1(\theta)}$ is a bijection. Therefore, it follows that  \[\left(\iota_{R_2(\theta)}^*\iota_\theta^*\grad_\theta\iota_{R_1(\theta)}\right)^*=-\iota_{R_1(\theta)}^*\dive_\theta\iota_\theta\iota_{R_2(\theta)}\] 
is a bijection.	In particular, by \ref{en:h31}, we calculate
	\[
	\big\Vert \big(\iota_{R_1(\theta)}^*\dive_\theta\iota_\theta\iota_{R_2(\theta)}\big)^{-1}\big\Vert=\big\|\big(\iota_{R_2(\theta)}^*\iota_\theta^*\grad_\theta\iota_{R_1(\theta)}\big)^{-1}\big\|\leq \pi^{-1}.
	\]
	Hence, 
	\[
	\big\| \big(\iota_{R(\theta)}^* A(\theta) \iota_{R(\theta)}\big)^{-1}\big\|\leq \pi^{-1},
	\]and we conclude the proof of assertion \ref{en:h32}.
\end{proof}
\begin{proof}[Proof of Theorem \ref{t:hypasse} -- Part 2] 
	The assertion \ref{en:grt3} of Hypothesis \ref{hyp:grtheta} follows from Proposition \ref{p:hypasse3}\ref{en:h32}. 
\end{proof}
To complete the proof of Theorem \ref{t:hypasse}, it remains to prove Hypothesis \ref{en:grt5}. For this, we make some preliminary observations.
 The proof of the next result is demonstrated by direct calculation and is therefore omitted.
\begin{proposition}
\label{p:proj}
Let $\phi \in [L^2(Y)]^{n \times d}$, $\theta \in \Theta$. Then,
\[
\pi_{P(\theta)} \phi = \langle \phi, e^{\i \langle \theta, \cdot \rangle_{\C^d}} \rangle  e^{\i \langle \theta, \cdot \rangle_{\C^d}} - \sum_{z \in \Z^d \backslash \{ 0 \}} \frac{(\theta + 2\pi z)(\theta + 2\pi z)^T}{\vert \theta + 2\pi z \vert^2} c^{(z)}_\phi  e^{\i \langle \theta+ 2\pi z, \cdot \rangle_{\C^d}},
\]
where
\[
c^{(z)}_\phi \coloneqq \langle \phi,  e^{\i \langle \theta+2\pi z, \cdot \rangle_{\C^d}} \rangle .
\]
\end{proposition}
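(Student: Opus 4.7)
The plan is to exploit the quasi-periodic Fourier basis $\{e^{\i\langle\theta+2\pi z,\cdot\rangle_{\mathbb{C}^d}}\}_{z\in\mathbb{Z}^d}$ of $L^2(Y)$, which is orthonormal and complete and lifts componentwise to an orthonormal basis of $[L^2(Y)]^{n\times d}$ (with the Frobenius fibre inner product). First I would expand an arbitrary $\phi\in [L^2(Y)]^{n\times d}$ as $\phi=\sum_{z\in\mathbb{Z}^d} c^{(z)}_\phi \, e^{\i\langle\theta+2\pi z,\cdot\rangle_{\mathbb{C}^d}}$ with $c^{(z)}_\phi\in\mathbb{C}^{n\times d}$ given by the stated inner products, and similarly do the same for elements of $P(\theta)$. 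The crucial observation is that $P(\theta)$ is block-diagonal with respect to this Fourier decomposition: by \eqref{Pspace} the first summand $e^{\i\langle\theta,\cdot\rangle_{\mathbb{C}^d}}\mathbb{C}^{n\times d}$ lives entirely in the $z=0$ mode, while $[H^1_\theta(Y)\perp e^{\i\langle\theta,\cdot\rangle_{\mathbb{C}^d}}]^n$ is spanned by the $z\neq 0$ modes, and $\grad_\theta$ preserves each mode. Consequently $\pi_{P(\theta)}$ commutes with each Fourier-mode projection, and the problem reduces to computing, for each $z\in\mathbb{Z}^d$, the orthogonal projection onto the $z$-th slice $P(\theta)_z\subseteq\mathbb{C}^{n\times d}$.

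Next I would identify these slices explicitly. For $z=0$ the slice is all of $\mathbb{C}^{n\times d}$, so the projection is the identity there and contributes the first term in the formula. For $z\neq 0$, a short computation shows that if $u=\sum_{z\neq 0} d^{(z)} e^{\i\langle\theta+2\pi z,\cdot\rangle_{\mathbb{C}^d}}\in [H^1_\theta(Y)\perp e^{\i\langle\theta,\cdot\rangle_{\mathbb{C}^d}}]^n$ with $d^{(z)}\in\mathbb{C}^n$, then
\[
\grad_\theta u = \sum_{z\neq 0}\i\, d^{(z)}(\theta+2\pi z)^T e^{\i\langle\theta+2\pi z,\cdot\rangle_{\mathbb{C}^d}}.
\]
Hence $P(\theta)_z=\{c(\theta+2\pi z)^T \colon c\in\mathbb{C}^n\}$, the $n$-dimensional subspace of $\mathbb{C}^{n\times d}$ consisting of matrices whose rows are parallel to $(\theta+2\pi z)^T$.

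The last step is a standard linear-algebra fact: with respect to the Frobenius inner product on $\mathbb{C}^{n\times d}$, the orthogonal projection onto $\{cw^T\colon c\in\mathbb{C}^n\}$ for fixed nonzero $w\in\mathbb{R}^d$ is implemented by right-multiplication by the rank-one projector $ww^T/|w|^2\in\mathbb{R}^{d\times d}$. Specialising $w=\theta+2\pi z$ and applying this to $c^{(z)}_\phi$ yields the $z$-th term of the stated formula; summing over $z\in\mathbb{Z}^d\setminus\{0\}$ with $L^2$-convergence guaranteed by Parseval (the per-mode map is norm-nonincreasing) and combining with the $z=0$ contribution gives the claim.

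Conceptually there is no real obstacle: everything flows from the Fourier decomposition and the explicit action of $\grad_\theta$ on quasi-periodic exponentials. The only delicacy is bookkeeping — fixing conventions for the outer product $v\otimes w$, matching $n\times d$ versus $d\times d$ matrix sizes, and making sure the projector $(\theta+2\pi z)(\theta+2\pi z)^T/|\theta+2\pi z|^2$ is multiplied on the correct side of $c^{(z)}_\phi$ — so that the resulting expression appears in exactly the stated form.
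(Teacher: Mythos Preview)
Your approach is correct and is exactly the direct calculation the paper alludes to; the paper in fact omits the proof entirely. Your closing caveat about bookkeeping is well placed: the computation naturally yields the $d\times d$ projector $(\theta+2\pi z)(\theta+2\pi z)^T/|\theta+2\pi z|^2$ acting on $c^{(z)}_\phi\in\mathbb{C}^{n\times d}$ by \emph{right}-multiplication with a \emph{plus} sign, so the displayed formula appears to carry a sign/side typo --- harmless for its sole use in Proposition~\ref{p:athetacont}, where only the explicit continuous dependence of each Fourier summand on $\theta$ is needed.
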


\begin{proposition}
 \label{p:athetacont} Let $(\theta_k)_{k\in \mathbb{N}}$ be a convergent sequence in $\Theta$, $\theta\coloneqq \lim_{k\to\infty}\theta_k$. Let $(u_k)_{k\in\mathbb{N}}$ in $[L^2(Y)]^n$, and $(q_k)_{k\in\mathbb{N}}$ in $[L^2(Y)]^{n\times d}$ weakly convergent sequences with limits $u$ and $q$. 
 Then, the following assertions hold.
 \begin{enumerate}[label=(\alph*)]
  \item\label{ath1} $\pi_{P(\theta_k)}q_k\rightharpoonup \pi_{P(\theta)}q$.
  \item\label{ath2} Assume, in addition, that $q_k\in P(\theta_k)$, $k\in\mathbb{N}$, as well as $(\grad_{\theta_k} u_k)_{k\in\mathbb{N}}$ and $(\dive_{\theta_k} q_k)_{k\in\mathbb{N}}$ are bounded. Then $u\in\dom(\grad_\theta)$, $q\in\dom(\dive_\theta)$ and 
  \[ 
      \grad_{\theta_k}u_k\rightharpoonup \grad_\theta u,\text{ and }\dive_{\theta_k}q_k\rightharpoonup \dive_\theta q.
  \]
 \end{enumerate}
\end{proposition}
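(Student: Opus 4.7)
The plan is to treat parts (a) and (b) separately: part (a) relies on the explicit formula from Proposition \ref{p:proj} to directly analyse the projection, while part (b) reduces the $\theta_k$-dependence of the quasi-periodic boundary conditions to purely periodic statements via the unitary multiplication operators $e^{\pm \i\langle\theta_k,\cdot\rangle}$.

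For part (a), I first observe that $\|\pi_{P(\theta_k)}\|=1$ together with boundedness of $(q_k)$ makes $(\pi_{P(\theta_k)}q_k)$ bounded in $[L^2(Y)]^{n\times d}$. Hence weak convergence reduces to testing against any $\psi\in[L^2(Y)]^{n\times d}$, and self-adjointness gives
\[
\langle \pi_{P(\theta_k)}q_k,\psi\rangle = \langle q_k,\pi_{P(\theta_k)}\psi\rangle,
\]
whose right-hand side tends to $\langle q,\pi_{P(\theta)}\psi\rangle$ provided $\pi_{P(\theta_k)}\psi\to\pi_{P(\theta)}\psi$ \emph{strongly} in $[L^2(Y)]^{n\times d}$. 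I would establish this strong convergence by combining the formula of Proposition \ref{p:proj} with the uniform lower bound $|\theta+2\pi z|\geq \pi$, valid for all $\theta\in\Theta=[-\pi,\pi)^d$ and $z\in\mathbb{Z}^d\setminus\{0\}$ (any coordinate with $|z_j|\geq 1$ gives $|\theta_j+2\pi z_j|\geq\pi$), the continuity in $\theta$ of each Fourier coefficient $c^{(z)}_\psi(\theta)=\langle\psi,e^{\i\langle\theta+2\pi z,\cdot\rangle}\rangle$, and a dominated convergence argument over $z\in\mathbb{Z}^d$: the rank-one matrices $(\theta+2\pi z)(\theta+2\pi z)^T/|\theta+2\pi z|^2$ are norm-bounded by $1$, and $(c^{(z)}_\psi(\theta_k))_z$ has $\ell^2$-norm equal to $\|\psi\|$.

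For part (b), I would introduce $\tilde u_k\coloneqq e^{-\i\langle\theta_k,\cdot\rangle}u_k\in [H^1_\#(Y)]^n$ and $\tilde q_k\coloneqq e^{-\i\langle\theta_k,\cdot\rangle}q_k$. The Leibniz identities
\[
\grad_{\theta_k}u_k=e^{\i\langle\theta_k,\cdot\rangle}\bigl(\grad\tilde u_k+\i\,\tilde u_k\otimes\theta_k\bigr),\quad \dive_{\theta_k}q_k=e^{\i\langle\theta_k,\cdot\rangle}\bigl(\dive\tilde q_k+\i\,\tilde q_k\theta_k\bigr),
\]
combined with the boundedness hypotheses, yield $L^2$-boundedness of $\grad\tilde u_k$ and $\dive\tilde q_k$. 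Since $e^{-\i\langle\theta_k,\cdot\rangle}\to e^{-\i\langle\theta,\cdot\rangle}$ uniformly, multiplication preserves weak $L^2$-limits, hence $\tilde u_k\rightharpoonup \tilde u\coloneqq e^{-\i\langle\theta,\cdot\rangle}u$ and $\tilde q_k\rightharpoonup \tilde q\coloneqq e^{-\i\langle\theta,\cdot\rangle}q$. Testing against periodic smooth functions identifies the weak $L^2$-limits of $\grad\tilde u_k$ and $\dive\tilde q_k$ with the distributional $\grad\tilde u$ and $\dive\tilde q$, placing $\tilde u\in[H^1_\#(Y)]^n$ and $\tilde q\in\dom(\dive_\#)$, which translates to $u\in\dom(\grad_\theta)$ and $q\in\dom(\dive_\theta)$. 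Rellich's theorem further upgrades $\tilde u_k\to\tilde u$ to strong $L^2$-convergence, so the Leibniz identities pass to the weak limit and deliver $\grad_{\theta_k}u_k\rightharpoonup\grad_\theta u$ and $\dive_{\theta_k}q_k\rightharpoonup\dive_\theta q$. Uniqueness of the identified limits promotes subsequential convergence to convergence of the whole sequence.

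The main obstacle will be in part (a): the strong convergence $\pi_{P(\theta_k)}\psi\to\pi_{P(\theta)}\psi$ requires passing to the limit inside an infinite series whose summands depend non-linearly on the lattice vectors $\theta+2\pi z$, and the dominated-convergence step genuinely uses the uniform separation $|\theta+2\pi z|\geq \pi$ to ensure the continuity of each rank-one projection in $\theta$.
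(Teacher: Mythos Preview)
Your proposal is correct, and while part (a) follows essentially the same idea as the paper---both invoke the explicit projection formula of Proposition \ref{p:proj} and pass to the limit via a dominated-convergence argument over the lattice index $z$---you organise it slightly differently by pushing the projection onto the test function $\psi$ via self-adjointness and then proving \emph{strong} convergence $\pi_{P(\theta_k)}\psi\to\pi_{P(\theta)}\psi$, whereas the paper expands $\pi_{P(\theta_k)}q_k$ directly and shows termwise convergence together with a tail bound. These are minor rearrangements of the same computation.

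Part (b), however, is a genuinely different route. The paper stays in Fourier space throughout: it writes $u_k$ and $q_k$ in the $\theta_k$-shifted basis $\{e^{\i\langle\theta_k+2\pi z,\cdot\rangle}\}_{z}$, computes $\grad_{\theta_k}u_k$ and $\dive_{\theta_k}q_k$ as explicit multiplier series, and passes to the limit coefficient by coefficient. You instead conjugate by the unitary multipliers $e^{\pm\i\langle\theta_k,\cdot\rangle}$ to transport everything to the fixed periodic operators $\grad_\#$ and $\dive_\#$, then appeal to the weak closedness of their graphs; the Rellich step you invoke for $\tilde u_k$ is convenient but in fact not needed, since weak convergence of $\tilde u_k$ already suffices for the Leibniz identities to pass to the weak limit. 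Your approach is more conceptual and avoids the explicit series manipulations; the paper's approach is more hands-on but makes the limiting objects completely transparent. Both are valid.
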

\begin{proof}
 For the proof of \ref{ath1}, we use Proposition \ref{p:proj}. Indeed, we obtain for all $k\in\mathbb{N}$ with $c^{(z)}_{q_k}= \langle q_k,  e^{\i \langle \theta_k+2\pi z, \cdot \rangle_{\C^d}} \rangle$ and that
 \begin{align*}
      \pi_{P(\theta_k)}q_k
      & = \langle q_k, e^{\i \langle \theta_k, \cdot \rangle_{\C^d}} \rangle  e^{\i \langle \theta_k, \cdot \rangle_{\C^d}} - \sum_{z \in \Z^d \backslash \{ 0 \}} \frac{(\theta_k + 2\pi z)(\theta_k + 2\pi z)^T}{\vert \theta_k + 2\pi z \vert^2} c^{(z)}_{q_k}  e^{\i \langle \theta_k+ 2\pi z, \cdot \rangle_{\C^d}}.
 \end{align*}
 As $(q_k)_k$ converges weakly to $q$, we obtain that 
 \[
    \frac{(\theta_k + 2\pi z)(\theta_k + 2\pi z)^T}{\vert \theta_k + 2\pi z \vert^2} c^{(z)}_{q_k}  e^{\i \langle \theta_k+ 2\pi z, \cdot \rangle_{\C^d}}\to
    \frac{(\theta + 2\pi z)(\theta + 2\pi z)^T}{\vert \theta + 2\pi z \vert^2} c^{(z)}_{q}  e^{\i \langle \theta+ 2\pi z, \cdot \rangle_{\C^d}}
 \]
 as $k\to\infty$. Thus, by the dominated convergence theorem, we infer
 \[
    \pi_{P(\theta_k)}q_k\rightharpoonup \langle q, e^{\i \langle \theta, \cdot \rangle_{\C^d}} \rangle  e^{\i \langle \theta_, \cdot \rangle_{\C^d}} - \sum_{z \in \Z^d \backslash \{ 0 \}} \frac{(\theta + 2\pi z)(\theta + 2\pi z)^T}{\vert \theta + 2\pi z \vert^2} c^{(z)}_{q}  e^{\i \langle \theta+ 2\pi z, \cdot \rangle_{\C^d}}=\pi_{P(\theta)}q.
 \]
 Hence, \ref{ath1} follows.
 
 The second statement is proved in a similar manner and so we will just sketch the argument. Upon decomposing $u_k$ with respect to the basis $\{ e^{\i \langle \theta + 2\pi z, \cdot\rangle_{\C^d}} \}_{z \in \Z^d}$, decomposing $q_k$ as above,  one computes 
 \[
 \grad_{\theta_k} u_k = \sum_{z \in \Z^d} \i (\theta_k +2\pi z)\otimes c^{(z)}_{u_k} e^{\i \langle \theta_k + 2\pi z, \cdot \rangle_{\C^d}},
 \]
 \[
 \dive_{\theta_k} q_k = \i \theta_k^T \langle q_k, e^{\i \langle \theta_k, \cdot \rangle_{\C^d}} \rangle  e^{\i \langle \theta_k, \cdot \rangle_{\C^d}} - \sum_{z \in \Z^d \backslash \{ 0 \}} (\theta_k + 2\pi z)^T \frac{(\theta_k + 2\pi z)(\theta_k + 2\pi z)^T}{\vert \theta_k + 2\pi z \vert^2} c^{(z)}_{q_k}  e^{\i \langle \theta_k+ 2\pi z, \cdot \rangle_{\C^d}}.
 \]
Then,  utilising the assumption that both the sequences $(\grad_{\theta_k} u_k )_{k \in \N}$ and $( \dive_{\theta_k} q_k )_{k \in \N}$ are bounded, we can pass to the limit in the above equations and characterise them as $\grad_\theta u$ and $\dive_\theta q$ respectively.
\end{proof}

\begin{theorem}
\label{t:wm} 
Let $s \in \mathcal{S}^{\#}_{n,d}$, $a \in \mathcal{M}^{\#}_{n,d}$, and $\ep \in (0,\infty)$. Assume setting \eqref{block:ell}. Consider $T : \Theta \rightarrow L(H)$ be given by 
\[
\theta \mapsto \tilde{\iota}_\theta \left( \begin{pmatrix} s & 0 \\ 0& (\iota_{\theta}^*a\iota_\theta)^{-1}\end{pmatrix} + \frac{1}{\epsilon}\begin{pmatrix} 0 & -\dive_\theta\iota_\theta \\ -\iota_\theta^* \grad_\theta & 0\end{pmatrix}\right)^{-1}\tilde{\iota_\theta}^*.
\]
Then, $T$ is weakly continuous.
\end{theorem}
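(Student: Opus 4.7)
The plan is to prove the stronger statement of sequential weak continuity. Fix $\theta\in\Theta$ and a sequence $\theta_k\to\theta$, and let $f=(g,\phi)\in H=[L^2(Y)]^n\oplus[L^2(Y)]^{n\times d}$. Writing $(u_k,q_k)\coloneqq \mathcal{B}_\epsilon(\theta_k)^{-1}(g,\iota_{\theta_k}^*\phi)\in H_{\theta_k}$, we have $T(\theta_k)f=(u_k,\iota_{\theta_k}q_k)\in H$ and must show $(u_k,\iota_{\theta_k}q_k)\rightharpoonup T(\theta)f$ in $H$. Corollary \ref{c:binv} together with the estimate $\|\iota_{\theta_k}^*\phi\|\leq\|\phi\|$ delivers a uniform bound, so after passing to a subsequence we obtain weak limits $u_k\rightharpoonup u$ in $[L^2(Y)]^n$ and $\iota_{\theta_k}q_k\rightharpoonup\tilde q$ in $[L^2(Y)]^{n\times d}$.

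Next I would unpack the first-order system from Proposition \ref{t:thetafirst}. Setting $r_k\coloneqq(\iota_{\theta_k}^*a\iota_{\theta_k})^{-1}q_k$, which is uniformly bounded by $\nu^{-1}\|q_k\|$ since $\Re\iota_{\theta_k}^*a\iota_{\theta_k}\geq\nu 1_{P(\theta_k)}$, and applying $\iota_{\theta_k}$ to the second row, the system becomes
\[
s u_k - \tfrac{1}{\epsilon}\dive_{\theta_k}\iota_{\theta_k}q_k = g,\qquad \iota_{\theta_k} r_k - \tfrac{1}{\epsilon}\grad_{\theta_k} u_k = \pi_{P(\theta_k)}\phi,
\]
which shows that both $(\grad_{\theta_k} u_k)_k$ and $(\dive_{\theta_k}\iota_{\theta_k}q_k)_k$ are uniformly bounded. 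Passing to a further subsequence, $\iota_{\theta_k}r_k\rightharpoonup\tilde r$. Proposition \ref{p:athetacont}\ref{ath1} yields $\pi_{P(\theta_k)}\phi\rightharpoonup\pi_{P(\theta)}\phi$ and $\tilde r\in P(\theta)$; Proposition \ref{p:athetacont}\ref{ath2} then gives $u\in\dom(\grad_\theta)$, $\tilde q\in\dom(\dive_\theta)\cap P(\theta)$, and the weak convergences $\grad_{\theta_k}u_k\rightharpoonup\grad_\theta u$ and $\dive_{\theta_k}\iota_{\theta_k}q_k\rightharpoonup\dive_\theta\tilde q$. Using that $\iota_{\theta_k}q_k=\pi_{P(\theta_k)}(a\iota_{\theta_k}r_k)$, that multiplication by $a\in L^\infty$ is weak-to-weak continuous, and one more application of \ref{ath1}, we identify $\tilde q=\pi_{P(\theta)} a\tilde r$.

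Writing $q\coloneqq\iota_\theta^*\tilde q$ and $r\coloneqq\iota_\theta^*\tilde r$ (so that $\iota_\theta r=\tilde r$ and $\iota_\theta q=\tilde q$), the identity $\tilde q=\pi_{P(\theta)}a\tilde r$ becomes $q=\iota_\theta^*a\iota_\theta r$, i.e.\ $r=(\iota_\theta^*a\iota_\theta)^{-1}q$. Passing to the weak limit in the two equations above then yields
\[
s u - \tfrac{1}{\epsilon}\dive_\theta\iota_\theta q = g,\qquad (\iota_\theta^*a\iota_\theta)^{-1}q - \tfrac{1}{\epsilon}\iota_\theta^*\grad_\theta u = \iota_\theta^*\phi,
\]
which is precisely the $\theta$-system of Proposition \ref{t:thetafirst}\ref{en:fo2} with right-hand side $(g,\iota_\theta^*\phi)$. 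By Lemma \ref{l:invblty} this limit system has a unique solution, so $(u,q)=\mathcal{B}_\epsilon(\theta)^{-1}(g,\iota_\theta^*\phi)$ and therefore $(u,\tilde q)=T(\theta)f$. Uniqueness of the weak subsequential limit upgrades this to full-sequence weak convergence $T(\theta_k)f\rightharpoonup T(\theta)f$, and since $f\in H$ was arbitrary, $T$ is weakly continuous. The main difficulty is the bookkeeping: correctly tracking how the fibre-dependent embeddings $\iota_{\theta_k}$, projections $\pi_{P(\theta_k)}$, and the auxiliary variable $r_k$ interact, so that Proposition \ref{p:athetacont} can be applied to the right objects in a fixed ambient space, and so that the limit $\tilde q$ is correctly expressed in terms of the limit of $r_k$.
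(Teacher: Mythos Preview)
Your proof is correct and follows essentially the same approach as the paper's: both take a convergent sequence $\theta_k\to\theta$, extract weakly convergent subsequences using the uniform resolvent bound, and invoke Proposition~\ref{p:athetacont} to pass to the limit in the first-order system, then appeal to uniqueness to conclude full-sequence convergence. The only cosmetic difference is that you introduce the auxiliary variable $r_k=(\iota_{\theta_k}^*a\iota_{\theta_k})^{-1}q_k$ to handle the fibre-dependent inverse, whereas the paper achieves the same end by multiplying the second row through by $\iota_{\theta_k}^*a\iota_{\theta_k}$ before taking the limit.
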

\begin{proof}
 Before we prove the statement, we observe that there exists $c>0$ such that for all $\theta\in\Theta$ one has
 \[
    \Re s\geq c 1_{\C^n},\text{ and }\Re {\iota_\theta^* a\iota_\theta} \geq c 1_{P(\theta)}.
 \]
 Hence, by Lemma \ref{l:invblty}, we deduce that
 \begin{equation}\label{eq:wm1}
    \left\|\left( \begin{pmatrix} s & 0 \\ 0& (\iota_{\theta}^*a\iota_\theta)^{-1}\end{pmatrix} + \frac{1}{\epsilon}\begin{pmatrix} 0 & -\dive_\theta\iota_\theta \\ -\iota_\theta^* \grad_\theta & 0\end{pmatrix}\right)^{-1}\tilde{\iota_\theta}^*\right\|\leq \tfrac{1}{c}.
 \end{equation}
 Moreover, it is clear that
  \begin{equation}\label{eq:wm2}
 \begin{aligned}
   \sup_{\theta\in\Theta}\left\|\begin{pmatrix} 0 & -\dive_\theta\iota_\theta \\ -\iota_\theta^* \grad_\theta & 0\end{pmatrix}\left( \begin{pmatrix} s & 0 \\ 0& (\iota_{\theta}^*a\iota_\theta)^{-1}\end{pmatrix} + \frac{1}{\epsilon}\begin{pmatrix} 0 & -\dive_\theta\iota_\theta \\ -\iota_\theta^* \grad_\theta & 0\end{pmatrix}\right)^{-1}\tilde{\iota_\theta}^*\right\| \\ < \infty. 
 \end{aligned}
 \end{equation}
 For the proof of the statement, we let $(\theta_k)_{k\in\mathbb{N}}$ be a convergent sequence in $\Theta$; denote by $\theta$ its limit. Let $f\in [L^2(Y)]^n$, $g\in [L^2(Y)]^{n\times d}$ and define $(u_k,q_k)\coloneqq T(\theta_k)(f,g)$. Then, by \eqref{eq:wm1} and \eqref{eq:wm2}, we obtain that
 $(u_k)_k$, $(q_k)_k$, $(\dive_\theta\iota_\theta q_k)_k$, and $(\grad_\theta u_k)_k$ are bounded. Without loss of generality, we may assume that $(u_k)_k$ and $(q_k)_k$ converge weakly to some $u$ and $q$ respectively. Thus, by the definition of $u_k$ and $q_k$, we obtain for all $k\in\mathbb{N}$ that
 \begin{align*}
   f & = s u_k -\dive_{\theta_k}\iota_{\theta_k} q_k,
   \\ \iota_{\theta_k}^*a\iota_{\theta_k}\pi_{P(\theta_k)} g & = q_k -  \iota_{\theta_k}^*a\iota_{\theta_k}\iota_{\theta_k}^*\grad_{\theta_k}u_k.
 \end{align*}
 By Proposition \ref{p:athetacont}, as $k\to\infty$, we obtain that the weak limits of the above equations are
 \begin{align*}
   f & = s u -\dive_\theta\iota_\theta q,
   \\ \iota_{\theta}^*a\iota_{\theta}\pi_{P(\theta)} g & = q -  \iota_{\theta}^*a\iota_{\theta}\iota_{\theta}^*\grad_{\theta}u.
 \end{align*}
 These in turn imply that $(u,q)=T(\theta)(f,g)$ which identifies the limit and the assertion follows.
\end{proof}

\begin{remark} With a rationale similar to the one used in \cite{tEGoWa} and utilising that the embedding $H_\theta^1(Y)\hookrightarrow L^2(Y)$ is compact, it can be shown that the mapping in Theorem \ref{t:wm} is even continuous in operator-norm.
\end{remark}

\begin{proof}[Proof of Theorem \ref{t:hypasse} -- Part 3] 
 It remains to prove assertion \ref{en:grt5} of Hypothesis \ref{hyp:grtheta}. This is true as $\theta \mapsto \iota_\theta \big( M(\theta) + \tfrac{1}{\ep} A(\theta) \big)^{-1} \iota_\theta^*$ is weakly continuous, see Theorem \ref{t:wm}, and, therefore, weakly measurable.
\end{proof}

We are now in the position to provide a proof of the main result of this section.

\begin{proof}[Proof of Theorem \ref{t:quanthom}]
Let $\tilde{\iota}_\theta:[L^2(Y)]^n\oplus P(\theta) \hookrightarrow [L^2(Y)]^n\oplus[L^2(Y)]^{n \times d}$. Theorem \ref{t:hypasse} implies that the assumptions of Theorem \ref{t:mtgrtheta} hold for the setting \eqref{block:ell}. Therefore, we deduce that there exists a $\kappa>0$ such that for all $\epsilon>0$, we obtain
		\begin{equation}\label{ffres}
		\begin{aligned}
	&	\Bigl\| \int_{\Theta}^\oplus \tilde{\iota}_\theta \left( \begin{pmatrix} s & 0 \\ 0& (\iota_{\theta}^*a\iota_\theta)^{-1}\end{pmatrix} + \frac{1}{\epsilon}\begin{pmatrix} 0 & -\dive_\theta\iota_\theta \\ -\iota_\theta^* \grad_\theta & 0\end{pmatrix}\right)^{-1}\tilde{\iota_\theta}^*d\theta \\& - \int_{\Theta}^\oplus \tilde{\iota}_\theta \left( \pi_{N(\theta)}\begin{pmatrix} s & 0 \\ 0& (\iota_{\theta}^*a\iota_\theta)^{-1}\end{pmatrix}\pi_{N(\theta)} + \frac{1}{\epsilon}\begin{pmatrix} 0 & -\dive_\theta\iota_\theta \\ -\iota_\theta^* \grad_\theta & 0\end{pmatrix}\right)^{-1}\tilde{\iota_\theta}^*d\theta  \Bigr\|\\
	& \hspace{.8\textwidth} \leq \kappa \epsilon.
		\end{aligned}
		\end{equation}
	We shall prove below the homogenisation formulae
	\begin{equation}\label{eq:homform}
	\iota^*_{N(\theta)}\begin{pmatrix} s & 0 \\ 0& (\iota_{\theta}^*a\iota_\theta)^{-1}\end{pmatrix}\iota_{N(\theta)} = \begin{pmatrix} \m(s) & 0 \\ 0& a^\textnormal{hom}(\theta)^{-1} \end{pmatrix}.
	\end{equation}
Now, clearly the right-hand side of \eqref{eq:homform} satisfies the assumptions of Theorem \ref{t:hom2} and we deduce that
		\begin{multline}\label{ffresss}
		\Bigl\| \int_{\Theta}^\oplus \tilde{\iota}_\theta \left( \begin{pmatrix} \m(s) & 0 \\ 0& a^\textnormal{hom}(\theta)^{-1} \end{pmatrix} + \frac{1}{\epsilon}\begin{pmatrix} 0 & -\dive_\theta\iota_\theta \\ -\iota_\theta^* \grad_\theta & 0\end{pmatrix}\right)^{-1}\tilde{\iota}_\theta^*d\theta \\ - \int_{\Theta}^\oplus \tilde{\iota}_\theta \left( \pi_{N(\theta)}\begin{pmatrix} s & 0 \\ 0& (\iota_{\theta}^*a\iota_\theta)^{-1}\end{pmatrix}\pi_{N(\theta)} + \frac{1}{\epsilon}\begin{pmatrix} 0 & -\dive_\theta\iota_\theta \\ -\iota_\theta^* \grad_\theta & 0\end{pmatrix}\right)^{-1}\tilde{\iota}_\theta^*d\theta  \Bigr\|\\ \leq \kappa \epsilon.
		\end{multline}
	The above assertions prove the desired result. Indeed, after having applied the unitary Gelfand transformation, Proposition \ref{t:guder} implies the equivalence of problems \eqref{eq:ep} and \eqref{probequiv}. Then, Proposition \ref{t:thetafirst} establishes the equivalence between the first and second-order formulations, and finally \eqref{ffres}, \eqref{ffresss} imply the required asymptotics for the first-order problem.

It remains to prove \eqref{eq:homform}. We use $N(\theta)=N_1(\theta)\oplus N_2(\theta)$, see \eqref{eq:n12}. First, we establish that
\begin{equation}
\label{p:homs}		\pi_{N_1(\theta)}s\pi_{N_1(\theta)} = \m(s)=\int_Y s(y)dy \quad (\theta \in \Theta).
\end{equation}		
This is a simple calculation:
\[
\langle s e^{\i \langle\theta,\cdot\rangle_{\mathbb{C}^d}} \alpha, e^{\i \langle\theta,\cdot\rangle_{\mathbb{C}^d}} \beta \rangle =\sum_{i,j\in\{1,\ldots,n\}} \left( \int_{Y}s_{ij}\right)\alpha_i \beta_j, \quad ( \alpha, \beta \in \mathbb{C}^n).
\]
Let us now prove that
\begin{equation}
\label{homforma}
\iota_{N_2(\theta)}^*(\iota_\theta^* a\iota_\theta)^{-1}\iota_{N_2(\theta)} = a^{\textnormal{hom}}(\theta)^{-1} \quad (\theta \in \Theta),
\end{equation}
with $\Re a^{\textnormal{hom}}(\theta)^{-1}\geq \nu\|a\|^{-2} 1_{N_2(\theta)}$.

Fix $\beta \in \C^{n \times d}$. Since $e^{\i \langle \theta,\cdot\rangle_{\mathbb{C}^d}}\mathbb{C}^{n \times d}\subseteq P(\theta)$,  and $\iota_\theta^* a \iota_\theta : P(\theta) \rightarrow P(\theta)$ is invertible, there exists $\gamma\in \mathbb{C}^{n \times d}$ and $N_{\theta \gamma}\in [H^1_\theta(Y)\perp e^{\i \langle \theta,\cdot\rangle_{\mathbb{C}^d}}]^n$ such that 
		\begin{equation}
		\label{pfdiage6}
		\iota_\theta^* a\iota_\theta(e^{\i \langle \theta,\cdot\rangle_{\mathbb{C}^d}}\gamma+ \grad_\theta N_{\theta \gamma})=e^{\i \langle \theta,\cdot\rangle_{\mathbb{C}^d}}\beta.
		\end{equation}
		Next, we compute for all $q\in [H^1_\theta(Y)\perp e^{\i \langle \theta,\cdot\rangle_{\mathbb{C}^d}}]^n$ that 
		\begin{align*}
		0 & = \langle e^{\i \langle \theta,\cdot\rangle_{\mathbb{C}^d}}\beta, \grad_\theta q \rangle = \big\langle \iota_\theta^* a\iota_\theta(e^{\i \langle \theta,\cdot\rangle_{\mathbb{C}^d}}\gamma+ \grad_\theta N_{\theta \gamma}), \grad_\theta q  \big\rangle \\ 
		&= \big\langle a(\grad_\theta N_{\theta \gamma} + e^{\i \langle \theta,\cdot\rangle_{\mathbb{C}^d}}\gamma), \grad_\theta q  \big\rangle.
		\end{align*}
		That is, $(N_{\theta \gamma k})_{k \in \{1,\ldots,n\}} = \big( \sum_{r=1}^n \sum_{s=1}^d  N^{(rs)}_{\theta k}  \gamma_{rs} \big)_{k \in \{1,\ldots,n\}}$, where $N^{(rs)}_\theta$ uniquely solves \eqref{cellprob}. 
		Furthermore, since $  e^{\i \langle \theta,\cdot\rangle_{\mathbb{C}^d}}\C^{n\times d}\subseteq P(\theta)$, \eqref{pfdiage6} implies that
		\begin{align}
		\notag\langle \beta, \eta \rangle_{\mathbb{C}^{n \times d}}  &= \langle e^{\i \langle \theta,\cdot\rangle_{\mathbb{C}^d}}\beta,e^{\i \langle \theta,\cdot\rangle_{\mathbb{C}^d}}\eta \rangle = \big\langle \iota_\theta^* a\iota_\theta(e^{\i \langle \theta,\cdot\rangle_{\mathbb{C}^d}}\gamma+ \grad_\theta N_{\theta \gamma}), e^{\i \langle \theta,\cdot\rangle_{\mathbb{C}^d}}\eta \big\rangle \\
		&= \big\langle a(\grad_\theta N_{\theta \gamma} + e^{\i \langle \theta,\cdot\rangle_{\mathbb{C}^d}}\gamma), e^{\i \langle \theta,\cdot\rangle_{\mathbb{C}^d}}\eta \big\rangle  = \langle a^{\textnormal{hom}}(\theta) \gamma, \eta \rangle_{\mathbb{C}^{n \times d}}, \qquad (\eta \in \mathbb{C}^{n\times d}).\label{eq:hom1}
		\end{align}
		That is $
		\gamma = \big( a^{\textnormal{hom}}(\theta) \big)^{-1} \beta,$
		where $a^{\textnormal{hom}}(\theta)$ is given by \eqref{ahomtheta}. Hence,
		\begin{align*}
	e^{\i \langle \theta,\cdot\rangle_{\mathbb{C}^d}}	\big( a^{\textnormal{hom}}(\theta) \big)^{-1} \beta & = 	e^{i\langle \theta,\cdot\rangle_{\mathbb{C}^d}}\gamma = \iota_{N_2(\theta)}^* 	e^{\i \langle \theta,\cdot\rangle_{\mathbb{C}^d}}\gamma \\
	&= \iota_{N_2(\theta)}^* (\grad_\theta N_{\theta \gamma} + 	e^{i\langle \theta,\cdot\rangle_{\mathbb{C}^d}}\gamma) 
		=  \iota_{N_2(\theta)}^* (\iota_\theta^* a\iota_\theta)^{-1}(	e^{\i\langle \theta,\cdot\rangle_{\mathbb{C}^d}}\beta) 
		\\&=\iota_{N_2(\theta)}^* (\iota_\theta^* a\iota_\theta)^{-1}\iota_{N_2(\theta)}(	e^{\i \langle \theta,\cdot\rangle_{\mathbb{C}^d}}\beta),
		\end{align*}
		that is, we have shown \eqref{homforma} holds. The claimed properties of $a^{\textnormal{hom}}(\theta)$ in the theorem statement are demonstrated in Proposition \ref{prop:ahom} in Section \ref{sec:ahom}.
\end{proof}
In the proof of Theorem \ref{s:example} we proved the following result about the asymptotic behaviour of the fluxes.
\begin{proposition}
For $F \in [L^2(Y)]^n$,	let
	\[
	u_{\ep,\theta} = \big( -\ep^{-2}\dive_\theta a \grad_\theta + s \big)^{-1} F,
	\]
	and
	\[
	v_{\ep,\theta} = \big(-\ep^{-2} \dive_\theta a^{\rm hom}(\theta) \grad_\theta + m(s) \big)^{-1} F.
	\]
	Then,
	\[
\Vert \ep^{-1}	\pi_\theta  a \grad_\theta u_{\ep \theta} - \ep^{-1}	\pi_\theta a^{\rm hom}(\theta) \grad_\theta v_{\ep,\theta} \Vert_{[L^2(Y)]^{n \times d}} \le \kappa \ep \Vert F \Vert_{[L^2(Y)]^n}.
	\]
\end{proposition}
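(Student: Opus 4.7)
The plan is to read off the flux estimate from the operator-norm bound between the first-order resolvents established in the proof of Theorem \ref{t:quanthom}, rather than from its corollary for the second-order resolvents. The key observation is that the \emph{second} component of the first-order solution is, up to the factor $\ep^{-1}$ and the embedding $\iota_\theta$, precisely the projected flux appearing in the statement, so the desired bound is simply the restriction to the second component of the first-order operator-norm bound.

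Interpreting $\pi_\theta \coloneqq \pi_{P(\theta)}=\iota_\theta\iota_\theta^*$, Proposition \ref{t:thetafirst} together with the inclusion $\rge(\grad_\theta)\subseteq P(\theta)$ from its proof shows that the pair $(u_{\ep,\theta},q_{\ep,\theta})$ with $q_{\ep,\theta}\coloneqq \ep^{-1}\iota_\theta^*a\grad_\theta u_{\ep,\theta}$ solves the first-order system $\mathcal{B}_\ep(\theta)(u_{\ep,\theta},q_{\ep,\theta})^\top=(F,0)^\top$ on $H_\theta=[L^2(Y)]^n\oplus P(\theta)$, and that $\iota_\theta q_{\ep,\theta}=\ep^{-1}\pi_{P(\theta)}a\grad_\theta u_{\ep,\theta}$. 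An identical construction for the homogenised equation produces $(v_{\ep,\theta},p_{\ep,\theta})$ with $p_{\ep,\theta}=\ep^{-1}\iota_\theta^*a^{\textnormal{hom}}(\theta)\grad_\theta v_{\ep,\theta}$ and $\iota_\theta p_{\ep,\theta}=\ep^{-1}\pi_{P(\theta)}a^{\textnormal{hom}}(\theta)\grad_\theta v_{\ep,\theta}$, solving $\widetilde{\mathcal{B}}_\ep(\theta)(v_{\ep,\theta},p_{\ep,\theta})^\top=(F,0)^\top$ with $\widetilde{M}(\theta)$ block-diagonal with entries $m(s)$ and $(\iota_\theta^*a^{\textnormal{hom}}(\theta)\iota_\theta)^{-1}$. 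Since the cell problem for the constant tensor $a^{\textnormal{hom}}(\theta)$ has trivial corrector, the calculation behind \eqref{eq:homform} gives $\pi_{N(\theta)}\widetilde{M}(\theta)\pi_{N(\theta)}=\pi_{N(\theta)}M(\theta)\pi_{N(\theta)}$, so Theorem \ref{t:hom2} yields $\|\mathcal{B}_\ep(\theta)^{-1}-\widetilde{\mathcal{B}}_\ep(\theta)^{-1}\|_{L(H_\theta)}\le\kappa\ep$ uniformly in $\theta$. Applying this bound to $(F,0)$, reading off the second component and composing with the isometry $\iota_\theta$ delivers the claim.

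The main obstacle is justifying the first-order reformulation of the \emph{homogenised} equation on $H_\theta$, because the constant tensor $a^{\textnormal{hom}}(\theta)$ need not preserve $P(\theta)$. This is resolved by the inclusion $P(\theta)^\perp\subseteq\kar(\dive_\theta)$ used already in the proof of Proposition \ref{t:thetafirst}: the divergence of the flux is unchanged when the flux is first projected onto $P(\theta)$, so the derivation of the first-order system from $-\ep^{-2}\dive_\theta a^{\textnormal{hom}}(\theta)\grad_\theta v+m(s)v=F$ proceeds verbatim. With this in hand, the extraction of the second component in the operator-norm bound is routine.
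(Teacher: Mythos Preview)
Your proposal is correct and follows essentially the same approach as the paper's proof, which consists of the single line ``This follows from inequalities \eqref{ffres}, \eqref{eq:homform} and \eqref{ffresss} for right-hand side $\mathcal{U}_\ep^{-1} (F,0)^T$.'' You have simply unpacked this: the combination of \eqref{ffres} and \eqref{ffresss} (via the common intermediate $\pi_{N(\theta)}M(\theta)\pi_{N(\theta)}$) is precisely the fibre-wise instance of Theorem~\ref{t:hom2} you invoke, and the identification of the second component of the first-order solution with the projected flux $\ep^{-1}\pi_{P(\theta)}a\grad_\theta u_{\ep,\theta}$ is exactly the content of Proposition~\ref{t:thetafirst}. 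Your explicit treatment of the potential obstacle that $a^{\textnormal{hom}}(\theta)$ need not preserve $P(\theta)$, and your use of the trivial-corrector observation (Proposition~\ref{prop:ahom}\ref{ahom5}) to verify the hypothesis of Theorem~\ref{t:hom2}, make precise what the paper leaves implicit in its notation for \eqref{ffresss}.
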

\begin{proof}
This follows from inequalities \eqref{ffres}, \eqref{eq:homform} and \eqref{ffresss} for right-hand side $\mathcal{U}_\ep^{-1} (F,0)^T$.
\end{proof}
Another implication of Theorem \ref{t:hypasse}, which we use in the next section, is the analogue of Theorem \ref{t:mtgrtheta2} that reads as follows.
\begin{theorem}
	\label{t.ahom2}
Let $a \in \mathcal{M}_{n,d}^\#$, $s \in \mathcal{S}_{n,d}^\#$. Consider the setting \eqref{block:ell} and let $\tilde{\iota}_\theta:[L^2(Y)]^n\oplus P(\theta) \hookrightarrow [L^2(Y)]^n\oplus[L^2(Y)]^{n \times d}$, $\iota\colon [L^2(\mathbb{R}^d)]^n\hookrightarrow [L^2(\mathbb{R}^d)]^n\oplus \{0\}\subseteq [L^2(\mathbb{R}^d)]^n\oplus[L^2(\mathbb{R}^d)]^{n \times d}$. Then,  there exists $\kappa>0$ such that for all $\epsilon>0$, one has
	\begin{multline*}
	\Bigl\| \big( -\dive a\left(\tfrac{\cdot}{\ep}\right) \grad + s\left(\tfrac{\cdot}{\ep}\right) \big)^{-1} -  \iota^*\mathcal{U}_\epsilon^*\int_{\Theta}^\oplus \tilde{\iota}_\theta \iota_{N(\theta)}  \bigg( \pi_{N(\theta)} \bigg[  \begin{pmatrix} \m(s) & 0 \\ 0& a^{\textnormal{hom}}(\theta)^{-1}\end{pmatrix} \\ + \frac{1}{\epsilon} \begin{pmatrix} 0 & -\dive_\theta\iota_\theta \\ -\iota_\theta \grad_\theta & 0\end{pmatrix} \bigg] \pi_{N(\theta)} \bigg)^{-1}\iota_{N(\theta)}^*\tilde{\iota_\theta}^*d\theta \mathcal{U}_\epsilon \iota \Bigr\| \leq \kappa \epsilon.
	\end{multline*}
\end{theorem}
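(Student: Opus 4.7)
The plan is to assemble the theorem from the components already established in the proof of Theorem \ref{t:quanthom}: apply the Gelfand transform to pass to the fibre picture, rewrite the elliptic problem as the first-order system of Proposition \ref{t:thetafirst}, invoke Theorem \ref{t:mtgrtheta2} (the $N(\theta)$-projected variant of the main fibre-homogenisation result), and finally use the homogenisation formula \eqref{eq:homform} to identify the coefficient that appears in the statement. The final embedding $\iota$ and its adjoint merely select the $u$-component of the first-order system, which by Proposition \ref{t:thetafirst} corresponds to the solution of the second-order problem.

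First I would apply $\mathcal{U}_\epsilon$ and use Proposition \ref{t:guder} to rewrite $\big(-\dive a(\cdot/\epsilon)\grad+s(\cdot/\epsilon)\big)^{-1}$ as a direct-fibre integral of $\big(-\epsilon^{-2}\dive_\theta a\grad_\theta+s\big)^{-1}$. Second, by Proposition \ref{t:thetafirst}, this second-order fibre resolvent, acting on data of the form $(F,0)^T$ and followed by projection onto the first component, agrees with the first-order resolvent appearing in setting \eqref{block:ell}. The bracketing $\iota^*\mathcal{U}_\epsilon^* (\,\cdot\,) \mathcal{U}_\epsilon \iota$ in the statement is precisely this chain of identifications.

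Third, since Theorem \ref{t:hypasse} verifies Hypothesis \ref{hyp:grtheta} for setting \eqref{block:ell}, Theorem \ref{t:mtgrtheta2} applies and yields
\[
   \Bigl\|\mathcal{C}_\epsilon - \int_\Theta^\oplus \tilde{\iota}_\theta \iota_{N(\theta)} \bigl(\iota_{N(\theta)}^*(M(\theta)+\tfrac{1}{\epsilon}A(\theta))\iota_{N(\theta)}\bigr)^{-1} \iota_{N(\theta)}^* \tilde{\iota}_\theta^*\,d\theta \Bigr\| \le \kappa \epsilon,
\]
with $M(\theta),A(\theta)$ as in \eqref{block:ell}. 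Fourth, invoke the homogenisation identity \eqref{eq:homform}, which was established in the course of proving Theorem \ref{t:quanthom}, to replace $\iota_{N(\theta)}^*M(\theta)\iota_{N(\theta)}$ by $\begin{pmatrix}\m(s)&0\\0&a^{\textnormal{hom}}(\theta)^{-1}\end{pmatrix}$. Since $\iota_{N(\theta)}^*\pi_{N(\theta)}=\iota_{N(\theta)}^*$ and $\pi_{N(\theta)}\iota_{N(\theta)}=\iota_{N(\theta)}$, the inner operator is equivalently written as $\iota_{N(\theta)}^*\big[\pi_{N(\theta)}(X+\frac{1}{\epsilon}A(\theta))\pi_{N(\theta)}\big]\iota_{N(\theta)}$ with $X=\begin{pmatrix}\m(s)&0\\0&a^{\textnormal{hom}}(\theta)^{-1}\end{pmatrix}$, matching the form in the statement.

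The main obstacle is purely bookkeeping: one must keep careful track of the various embeddings $\iota$, $\tilde{\iota}_\theta$, $\iota_\theta$, $\iota_{N(\theta)}$, and the associated projections, and confirm that the $\pi_{N(\theta)}[\cdot]\pi_{N(\theta)}$ expression inside the inverse in the theorem statement agrees with the restriction $\iota_{N(\theta)}^*[\cdot]\iota_{N(\theta)}$ appearing in Theorem \ref{t:mtgrtheta2}, up to the natural identification of $N(\theta)$ with its image in $H_\theta$. No new analytic estimate is needed beyond those already assembled; the constant $\kappa$ inherits from Theorem \ref{t:mtgrtheta2} and the unitarity of $\mathcal{U}_\epsilon$ and of the projection $\iota$.
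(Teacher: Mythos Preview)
Your proposal is correct and follows essentially the same approach as the paper, which presents Theorem~\ref{t.ahom2} as a direct consequence of Theorem~\ref{t:hypasse} (verifying Hypothesis~\ref{hyp:grtheta} for setting~\eqref{block:ell}) combined with Theorem~\ref{t:mtgrtheta2} and the identifications already carried out in the proof of Theorem~\ref{t:quanthom}, in particular the homogenisation formula~\eqref{eq:homform}. Your observation that the $\pi_{N(\theta)}[\cdot]\pi_{N(\theta)}$ sandwich in the statement coincides with the $\iota_{N(\theta)}^*[\cdot]\iota_{N(\theta)}$ restriction from Theorem~\ref{t:mtgrtheta2} via $\iota_{N(\theta)}^*\pi_{N(\theta)}=\iota_{N(\theta)}^*$ is exactly the bookkeeping the paper leaves implicit.
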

For completeness, we shall end this section with the well-posedness proof of \eqref{cellprob}.
\begin{proposition}\label{p:ahomwd} Let $a\in \mathcal{M}_{n,d}^\#$. Then, for all $\theta\in\Theta$ and $\gamma\in \mathbb{C}^{n \times d}$ there exists a uniquely determined $N_{\theta \gamma}\in [H^1_\theta(Y)\bot e^{\i\langle \theta,\cdot\rangle_{\mathbb{C}^d}}]^n$ such that
	\begin{equation}\label{eq:homvar}
	\langle a (\grad_\theta N_{\theta \gamma} + e^{\i \langle \theta,\cdot\rangle_{\mathbb{C}^d}}\gamma),\grad_\theta\phi\rangle = 0, \quad  (\phi\in [H^1_\theta(Y)\bot e^{\i\langle \theta,\cdot\rangle_{\mathbb{C}^d}}]^n).
	\end{equation} 
	Furthermore, the inequality 
	$$
	\Vert \grad_\theta N_{\theta \gamma} \Vert \le\tfrac{\Vert a \Vert}{\nu} \Vert \gamma \Vert
	$$
	holds. Here, $\nu$ is such that $\Re a \ge \nu$.
\end{proposition}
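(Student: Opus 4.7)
The plan is to recognise this as a textbook Lax--Milgram argument carried out on the closed subspace $V_\theta \coloneqq [H^1_\theta(Y) \perp e^{\i\langle\theta,\cdot\rangle_{\mathbb{C}^d}}]^n$ of $[H^1_\theta(Y)]^n$, equipped with the inner product $\langle\grad_\theta\,\cdot\,,\grad_\theta\,\cdot\,\rangle$. The decisive input is already available: by Proposition~\ref{p:hypasse3}\ref{en:h31}, the Poincar\'e-type inequality $\|u\|_{[L^2(Y)]^n} \leq \pi^{-1}\|\grad_\theta u\|_{[L^2(Y)]^{n\times d}}$ holds on $V_\theta$, which in particular shows that $V_\theta$ is a Hilbert space under the gradient norm and that $\grad_\theta$ provides an equivalent norm to the usual $H^1$-norm on $V_\theta$.

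Next I would introduce the sesquilinear form
\[
   b(u,v) \coloneqq \langle a\grad_\theta u, \grad_\theta v\rangle_{[L^2(Y)]^{n\times d}} \qquad (u,v\in V_\theta),
\]
and the conjugate-linear functional $\ell(\phi) \coloneqq -\langle a e^{\i\langle\theta,\cdot\rangle_{\mathbb{C}^d}}\gamma, \grad_\theta\phi\rangle$ on $V_\theta$. Boundedness of $b$ with constant $\|a\|$ and boundedness of $\ell$ with constant $\|a\|\|\gamma\|_{\mathbb{C}^{n\times d}}$ (since $Y$ has unit measure, $\|e^{\i\langle\theta,\cdot\rangle}\gamma\|_{[L^2(Y)]^{n\times d}} = \|\gamma\|_{\mathbb{C}^{n\times d}}$) are immediate from Cauchy--Schwarz and the boundedness of $a$. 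For coercivity one uses $\Re a \geq \nu 1_{\mathbb{C}^{n\times d}}$ pointwise, which yields
\[
   \Re b(u,u) \geq \nu \|\grad_\theta u\|^2_{[L^2(Y)]^{n\times d}} \qquad (u \in V_\theta),
\]
i.e., coercivity with constant $\nu$ in the chosen norm on $V_\theta$. The complex (or sesquilinear) Lax--Milgram theorem then produces a unique $N_{\theta\gamma}\in V_\theta$ with $b(N_{\theta\gamma},\phi) = \ell(\phi)$ for every $\phi\in V_\theta$, which is precisely \eqref{eq:homvar}.

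For the norm bound I would simply test \eqref{eq:homvar} against $\phi = N_{\theta\gamma}$ and take real parts:
\[
   \nu \|\grad_\theta N_{\theta\gamma}\|^2 \leq \Re b(N_{\theta\gamma},N_{\theta\gamma}) = -\Re\langle a e^{\i\langle\theta,\cdot\rangle_{\mathbb{C}^d}}\gamma, \grad_\theta N_{\theta\gamma}\rangle \leq \|a\|\,\|\gamma\|\,\|\grad_\theta N_{\theta\gamma}\|,
\]
from which $\|\grad_\theta N_{\theta\gamma}\| \leq \tfrac{\|a\|}{\nu}\|\gamma\|$ follows by dividing. There is no serious obstacle here: the only non-trivial structural ingredient is the Poincar\'e inequality on the quasi-periodic mean-zero space $V_\theta$, which Proposition~\ref{p:hypasse3}\ref{en:h31} already supplies uniformly in $\theta\in\Theta$; everything else is a direct application of Lax--Milgram.
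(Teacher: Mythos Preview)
Your proof is correct and is essentially the same argument as the paper's, just phrased in the standard Lax--Milgram language rather than the paper's operator-theoretic style: the paper transfers the problem via the bijection $\grad_\theta\colon V_\theta \to R_2(\theta)$ and inverts the coercive operator $\iota_{R_2(\theta)}^* a\,\iota_{R_2(\theta)}$ directly, which is exactly Lax--Milgram unpacked. The norm bound is obtained identically in both.
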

\begin{proof} 
	For this note that by Proposition \ref{p:hypasse3}\ref{en:h31.5}, we have \[R_2(\theta)\coloneqq \{ \grad_\theta \phi; \phi\in [H_\theta^1(Y) \bot e^{\i \langle \theta,\cdot\rangle_{\mathbb{C}^d}}]^n\}\subseteq L^2(Y)^{n\times d}\text{ closed.}\] We denote, as usual, by $\iota_{R_2(\theta)}$ and $\pi_{R_2(\theta)}$ the canonical embedding from $R_2(\theta)$ and the orthogonal projection to $R_2(\theta)$.
	
	Next, we shall reformulate \eqref{eq:homvar}:  $N_{\theta \gamma} \in \mathcal{H}_\theta\coloneqq [H^1_\theta(Y)\bot e^{\i\langle \theta,\cdot\rangle_{\mathbb{C}^d}}]^n$ satisfies \eqref{eq:homvar}, if, and only if, for all $\phi \in \mathcal{H}_\theta$ one has
	\begin{align*}
	  \langle a \grad_\theta N_{\theta \gamma},\grad_\theta\phi\rangle & = \langle -a e^{\i \langle \theta,\cdot\rangle_{\mathbb{C}^d}}\gamma,\grad_\theta\phi\rangle
	  \\ & = \langle -a e^{\i \langle \theta,\cdot\rangle_{\mathbb{C}^d}}\gamma,\pi_{R_2(\theta)}\grad_\theta\phi\rangle
	  \\ & = \langle -a e^{\i \langle \theta,\cdot\rangle_{\mathbb{C}^d}}\gamma,\iota_{R_2(\theta)}\iota_{R_2(\theta)}^*\grad_\theta\phi\rangle
	  \\ & = \langle -\iota_{R_2(\theta)}^* a e^{\i \langle \theta,\cdot\rangle_{\mathbb{C}^d}}\gamma,\iota_{R_2(\theta)}^*\grad_\theta\phi\rangle.
	\end{align*}
        Next, since
        \begin{align*}
           \langle a \grad_\theta N_{\theta \gamma},\grad_\theta\phi\rangle & =\langle a \iota_{R_2(\theta)}\iota_{R_2(\theta)}^* \grad_\theta N_{\theta \gamma},\iota_{R_2(\theta)}\iota_{R_2(\theta)}^*\grad_\theta\phi\rangle
           \\&=\langle \iota_{R_2(\theta)}^* a \iota_{R_2(\theta)}\iota_{R_2(\theta)}^* \grad_\theta N_{\theta \gamma},\iota_{R_2(\theta)}^*\grad_\theta\phi\rangle,
        \end{align*}
        we deduce that \eqref{eq:homvar} is equivalent to stating that
        \[
           \langle \iota_{R_2(\theta)}^* a \iota_{R_2(\theta)}\iota_{R_2(\theta)}^* \grad_\theta N_{\theta \gamma},\iota_{R_2(\theta)}^*\grad_\theta\phi\rangle = \langle -\iota_{R_2(\theta)}^* a e^{\i \langle \theta,\cdot\rangle_{\mathbb{C}^d}}\gamma,\iota_{R_2(\theta)}^*\grad_\theta\phi\rangle \quad(\phi\in \mathcal{H}_\theta),
        \]which, due to the fact that  the operator $\grad_\theta \colon \mathcal{H}_\theta\to R_2(\theta)$ is a bijection, is  equivalent to stating
        \[
           \iota_{R_2(\theta)}^* a \iota_{R_2(\theta)}\iota_{R_2(\theta)}^* \grad_\theta N_{\theta \gamma} = -\iota_{R_2(\theta)}^* a e^{\i \langle \theta,\cdot\rangle_{\mathbb{C}^d}}\gamma.
        \]
        The coerciveness of $a$ implies that $\iota_{R_2(\theta)}^* a \iota_{R_2(\theta)}$ is coercive. Hence,
        \[
          \iota_{R_2(\theta)}^*\grad_\theta N_{\theta \gamma} = -\left(\iota_{R_2(\theta)}^* a \iota_{R_2(\theta)}\right)^{-1}\iota_{R_2(\theta)}^* a e^{\i \langle \theta,\cdot\rangle_{\mathbb{C}^d}}\gamma.
        \]
        The last equation determines $\grad_\theta N_{\theta \gamma}$ uniquely, and the desired assertion follows by observing that $N_{\theta \gamma}\in \mathcal{H}_\theta$ and that $\grad_\theta\colon \mathcal{H}_\theta\to R_2(\theta)$ is bijective.
        
        To prove the inequality, we note that since $\Re a \ge \nu 1_{\mathbb{C}^{n\times d}}$ then we obtain $\Re (\iota^*_\theta a \iota_\theta) \ge \nu 1_{P(\theta)}$. Therefore,
        \[
        \Vert (\iota^*_\theta a \iota_\theta)^{-1} \Vert \le \nu^{-1},
        \]
        and we calculate
        \[
        \Vert \iota_{R_2(\theta)}^*\grad_\theta N_{\theta \gamma} \Vert \le \nu^{-1} \Vert \iota_{R_2(\theta)}^* a e^{\i \langle \theta,\cdot\rangle_{\mathbb{C}^d}}\gamma \Vert \le \tfrac{\Vert a \Vert}{\nu} \Vert \gamma \Vert. \qedhere
        \]
\end{proof}

\section{Properties of the fibre-homogenised matrix $a^{\textnormal{hom}}(\theta)$ and comparisons to classical results}
\label{sec:ahom}

In the whole section, we adopt the setting \eqref{block:ell}. In Section \ref{s:example}, we established 
$$
 \mathcal{U}_\epsilon^{-1}\int_{\Theta}^\oplus \big(- \ep^{-2}\dive_\theta a^{\textnormal{hom}}(\theta) \grad_\theta  + \m(s)\big)^{-1}d\theta\mathcal{U}_\epsilon$$
to be non-standard leading-order asymptotics in $\ep>0$, uniform in $\theta \in \Theta$, for the operator family $\big((- \dive a(\cdot/\epsilon)  \grad  + s)^{-1}\big)_\epsilon$. 
This section is devoted to comparing these asymptotics to the classical ones found in the literature, see Remark \ref{r.compare}. We end the section with an example of when $A^{\rm hom}(\theta) \neq A^{\rm hom}(0)$. The main result of the section is as follows. 
\begin{theorem}
	\label{t.classical}
 Let $a \in \mathcal{M}_{n,d}^\#$, $s \in \mathcal{S}_{n,d}^\#$. Then, there exists a constant $\kappa>0$ such that for all $\epsilon>0$, the inequality
 \[
 \Bigl\| \big(-\dive a\left(\tfrac{\cdot}{\epsilon}\right)\grad + s\left(\tfrac{\cdot}{\epsilon}\right)\big)^{-1}  - \big(-\dive a^{\textnormal{hom}}(0)\grad + \m(s)\big)^{-1} \Bigr\| \leq \kappa \epsilon
 \]
 holds.  The constant matrix $\m(s) \in \mathcal{S}_{n,d}^\#$ and constant fourth-order tensor $a^{\textnormal{hom}}(0) \in \mathcal{M}_{n,d}^\#$, are given in Theorem \ref{t:quanthom}.
\end{theorem}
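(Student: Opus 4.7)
The strategy is to reduce the claim, via Theorem \ref{t:quanthom} and the triangle inequality, to a uniform-in-$\theta$ comparison of two constant-coefficient (in $y$) fibre resolvents, and then to analyse this comparison using Fourier diagonalisation together with a Lipschitz estimate on the fibre-homogenised tensor $a^{\textnormal{hom}}(\theta)$. More precisely, since $a^{\textnormal{hom}}(0)$ and $\m(s)$ are constants, Proposition \ref{t:guder} gives $(-\dive a^{\textnormal{hom}}(0)\grad + \m(s))^{-1} = \mathcal{U}_\epsilon^{-1} \int_\Theta^\oplus R^{\textnormal{cl}}_\theta\,d\theta\,\mathcal{U}_\epsilon$, where $R^{\textnormal{cl}}_\theta := (-\epsilon^{-2}\dive_\theta a^{\textnormal{hom}}(0)\grad_\theta + \m(s))^{-1}$. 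Combined with Theorem \ref{t:quanthom}, the proof reduces to showing
\[
\sup_{\theta \in \Theta} \bigl\Vert R^{\textnormal{fib}}_\theta - R^{\textnormal{cl}}_\theta \bigr\Vert \leq \kappa\epsilon, \qquad R^{\textnormal{fib}}_\theta := (-\epsilon^{-2}\dive_\theta a^{\textnormal{hom}}(\theta)\grad_\theta + \m(s))^{-1}.
\]

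For the uniform comparison, I would combine the resolvent identity $R^{\textnormal{fib}}_\theta - R^{\textnormal{cl}}_\theta = R^{\textnormal{fib}}_\theta \, \epsilon^{-2}\dive_\theta\bigl(a^{\textnormal{hom}}(\theta) - a^{\textnormal{hom}}(0)\bigr)\grad_\theta \, R^{\textnormal{cl}}_\theta$ with a Fourier-mode decomposition. Because both $a^{\textnormal{hom}}(\theta)$ and $a^{\textnormal{hom}}(0)$ are constant in $y$, both $R^{\textnormal{fib}}_\theta$ and $R^{\textnormal{cl}}_\theta$ diagonalise in the basis $\{e^{\i\langle\theta+2\pi z,\cdot\rangle_{\C^d}}\}_{z\in\Z^d}$ of $L^2_\theta(Y)$. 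On each mode $z\neq 0$, the bound $|\theta + 2\pi z|^2 \geq \pi^2$ valid on $\Theta = [-\pi,\pi)^d$ together with coercivity yields $\Vert R^{\textnormal{fib}}_\theta \Vert,\, \Vert R^{\textnormal{cl}}_\theta \Vert \leq \epsilon^2/(\nu\pi^2)$ on that invariant subspace, so the difference is $O(\epsilon^2)$ there. On the zero mode $e^{\i\langle\theta,\cdot\rangle_{\C^d}}\C^n$, both resolvents reduce to $\C^n$-matrix inversions of the form $\epsilon^{-2}T(\theta, a) + \m(s)$ with $\Re T(\theta, a) \geq \nu|\theta|^2 I_{\C^n}$, and the resolvent identity produces the bound
\[
\bigl\Vert R^{\textnormal{fib}}_\theta - R^{\textnormal{cl}}_\theta \bigr\Vert_{\text{zero mode}} \leq C\, \frac{\epsilon^2 |\theta|^2 \,\Vert a^{\textnormal{hom}}(\theta) - a^{\textnormal{hom}}(0)\Vert}{(\nu|\theta|^2 + c\epsilon^2)^2}.
\]

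The key analytic input is then the Lipschitz estimate $\Vert a^{\textnormal{hom}}(\theta) - a^{\textnormal{hom}}(0)\Vert \leq C|\theta|$ near $\theta = 0$. To obtain it, set $\widetilde N^{(rs)}_\theta := e^{-\i\langle\theta,\cdot\rangle_{\C^d}} N^{(rs)}_\theta \in [H^1_\#(Y)\perp 1]^n$; the cell problem \eqref{cellprob} becomes a variational problem for $\widetilde N^{(rs)}_\theta$ on a $\theta$-independent Hilbert space whose sesquilinear form is polynomial in $\theta$ and uniformly coercive (by the Poincar\'e inequality on $H^1_\# \perp 1$), so $\widetilde N^{(rs)}_\theta$ and hence $a^{\textnormal{hom}}(\theta)$ depend analytically on $\theta$. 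Inserting this Lipschitz bound into the zero-mode estimate for $|\theta|\leq 1$ and bounding $|\theta|^3/(\nu|\theta|^2 + c\epsilon^2)^2$ by its maximum over $|\theta|\geq 0$ (attained at $|\theta|^2 = 3c\epsilon^2/\nu$ and of order $\epsilon^{-1}$), while using boundedness of $a^{\textnormal{hom}}$ to get an $O(\epsilon^2)$ bound for $|\theta|\geq 1$, yields the required uniform $O(\epsilon)$ estimate. The principal subtlety is precisely this last Birman--Suslina-type threshold optimisation: Theorem \ref{t:hom2} does \emph{not} apply directly because the $N(\theta)$-projections of the first-order forms of $\mathcal{B}_\epsilon$ and the classical operator differ by $a^{\textnormal{hom}}(\theta)^{-1} - a^{\textnormal{hom}}(0)^{-1}$ on $N_2(\theta)$, so the Lipschitz regularity of $a^{\textnormal{hom}}$ at the origin is the essential ingredient.
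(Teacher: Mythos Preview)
Your proposal is correct and follows essentially the same route as the paper. The paper also proves the Lipschitz bound $\|a^{\textnormal{hom}}(\theta)-a^{\textnormal{hom}}(0)\|\le\kappa|\theta|$ via the same substitution $\widetilde N_{\theta X}=e^{-\i\langle\theta,\cdot\rangle}N_{\theta X}$ (Proposition~\ref{prop:ahomasym}), and then performs the identical zero-mode threshold calculation you describe (Proposition~\ref{p.ahomasy}), bounding $t^{3}/(1+t^{2})^{2}$ uniformly in $t=|\theta|/\epsilon$.

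The only organisational difference is that the paper, instead of starting from Theorem~\ref{t:quanthom} and splitting the full second-order fibre resolvent into Fourier modes, applies Theorem~\ref{t.ahom2} twice (once for $a,s$ and once for the constant pair $a^{\textnormal{hom}}(0),\m(s)$, using Proposition~\ref{prop:ahom}\ref{ahom5} to get $(a^{\textnormal{hom}}(0))^{\textnormal{hom}}(\theta)=a^{\textnormal{hom}}(0)$). This reduces the comparison directly to the finite-dimensional $N(\theta)$-block, so the nonzero-mode estimate is absorbed into the abstract machinery rather than done by hand; your explicit Fourier split achieves the same effect and is equally valid.
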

Before proving this result, we introduce some related auxiliary results. 
\begin{proposition}
	\label{prop:ahom}
	Let  $\theta\in \Theta$, $a \in \mathcal{M}_{n,d}^\#$ and $\nu>0$ such that $\Re a\geq \nu 1_{\C^{n \times d}}$. Then, the following assertions hold:
	\begin{enumerate}[label=(\alph*)]
	 \item\label{ahom1} for all $X\in\mathbb{C}^{n\times d}$ with $N_{\theta X}\coloneqq \sum_{r=1}^n\sum_{s=1}^d N_{\theta}^{(rs)}X_{rs}$
	 \[
	     \langle a^{\textnormal{hom}}(\theta)X,Z\rangle_{\mathbb{C}^{n\times d}}=\langle a(\grad_\theta N_{\theta X} +e^{\i\langle\theta,\cdot\rangle_{\C^d}}X),\grad_\theta N_{\theta Z} +e^{\i\langle\theta,\cdot\rangle_{\C^d}}Z\rangle \quad (X,Z \in \mathbb{C}^{n \times d});
	 \]
	 \item\label{ahom2} for all $X\in\mathbb{C}^{n\times d}$
	 \begin{multline*}
	     \Re\langle a^{\textnormal{hom}}(\theta)X,X\rangle_{\mathbb{C}^{n\times d}}\\ =\inf_{N\in [H_\theta^1(Y)\bot e^{\i\langle \theta,\cdot\rangle_{\C^d}}]^n}\Re \langle a(\grad_\theta N_\theta +e^{\i\langle\theta,\cdot\rangle_{\C^d}}X),\grad_\theta N_\theta +e^{\i\langle\theta,\cdot\rangle_{\C^d}}X\rangle;
	 \end{multline*}
	 \item\label{ahom2.5} we have
	 \[
	     \iota_{N_2(\theta)}^*\left(\iota_\theta^*a\iota_\theta\right)^{-1}\iota_{N_2(\theta)}=a^{\textnormal{hom}}(\theta)^{-1};
	 \]
	 \item\label{ahom3} $\Re a^{\textnormal{hom}}(\theta)\geq \nu 1_{\C^{n\times d}}$;
	 \item\label{ahom4} $\|\Re a^{\textnormal{hom}}(\theta)\|\leq \|\Re a\|$;
	 \item\label{ahom4.5} $\|a^{\textnormal{hom}}(\theta)\|\leq \frac{\|a\|^2}{\nu}$;
	 \item\label{ahom5} if $a_{ijkl}\in \mathbb{C}$ $(i,k \in \{1, \ldots, n\} , j,l\in \{ 1,\ldots, d\})$, then $a^{\textnormal{hom}}(\theta) = a$ $(\theta \in \Theta)$.
	\end{enumerate}
\end{proposition}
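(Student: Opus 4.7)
For $N \in \mathcal{H}_\theta \coloneqq [H_\theta^1(Y)\perp e^{\i\langle \theta,\cdot\rangle_{\C^d}}]^n$ and $X \in \C^{n\times d}$, write $Y_N \coloneqq \grad_\theta N + e^{\i\langle\theta,\cdot\rangle_{\C^d}}X$. The key structural fact I shall use repeatedly is the \emph{$L^2$-orthogonality} $\grad_\theta N \perp e^{\i\langle\theta,\cdot\rangle_{\C^d}}X$ in $[L^2(Y)]^{n\times d}$: componentwise Fourier expansion of $N$ in the basis $\{e^{\i\langle\theta+2\pi z,\cdot\rangle_{\C^d}}\}_{z \in \Z^d}$ has no $z=0$ term by the constraint $N_i\perp e^{\i\langle\theta,\cdot\rangle_{\C^d}}$, so $\grad_\theta N$ only involves frequencies $z\neq 0$ whereas $e^{\i\langle\theta,\cdot\rangle_{\C^d}}X$ only involves the $z=0$ frequency. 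Part \ref{ahom2.5} has already been established in the proof of Theorem \ref{t:quanthom}, cf.\ \eqref{homforma}. For \ref{ahom1}, note that \eqref{eq:hom1} states $\langle a^{\textnormal{hom}}(\theta)X,Z\rangle_{\C^{n\times d}} = \langle aY_{N_{\theta X}}, e^{\i\langle\theta,\cdot\rangle_{\C^d}}Z\rangle$, and testing the cell problem \eqref{cellprob} for $N_{\theta X}$ against $\phi=N_{\theta Z}$ gives $\langle aY_{N_{\theta X}}, \grad_\theta N_{\theta Z}\rangle = 0$, which may therefore be added to the right-hand side, yielding the symmetric bilinear form in \ref{ahom1}.

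For \ref{ahom2}, substituting $N=N_{\theta X}+(N-N_{\theta X})$ in $Y_N$ and expanding $\Re\langle aY_N,Y_N\rangle$, the cross-term $2\Re\langle aY_{N_{\theta X}},\grad_\theta(N-N_{\theta X})\rangle$ vanishes by the cell problem with $\phi=N-N_{\theta X}\in\mathcal{H}_\theta$; the remaining square term is nonnegative by $\Re a\geq \nu$, so the infimum is attained uniquely at $N=N_{\theta X}$ with value $\Re\langle a^{\textnormal{hom}}(\theta)X,X\rangle$ by \ref{ahom1}. Parts \ref{ahom3} and \ref{ahom4} both drop out of \ref{ahom2}: for \ref{ahom3}, bound the infimum below by $\nu\|Y_{N_{\theta X}}\|^2\geq \nu\|X\|_{\C^{n\times d}}^2$ using the $L^2$-orthogonality above; for \ref{ahom4}, plug the competitor $N\equiv 0$ into the infimum characterisation to get $\Re\langle a^{\textnormal{hom}}(\theta)X,X\rangle\leq \|\Re a\|\|X\|_{\C^{n\times d}}^2$, and then invoke self-adjointness and nonnegativity (from \ref{ahom3}) of $\Re a^{\textnormal{hom}}(\theta)$ to conclude that its operator norm equals the supremum of its quadratic form.

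For \ref{ahom4.5}, combine \ref{ahom1} with the orthogonality $\langle aY_{N_{\theta X}},\grad_\theta N_{\theta Z}\rangle=0$ (cell problem, $\phi=N_{\theta Z}$) to reduce to $\langle a^{\textnormal{hom}}(\theta)X,Z\rangle_{\C^{n\times d}}=\langle aY_{N_{\theta X}}, e^{\i\langle\theta,\cdot\rangle_{\C^d}}Z\rangle$, and then apply Cauchy--Schwarz together with the a priori bound $\|Y_{N_{\theta X}}\|\leq (\|a\|/\nu)\|X\|_{\C^{n\times d}}$, itself obtained by testing the cell problem with $\phi=N_{\theta X}$ and using $\Re a\geq \nu$. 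For \ref{ahom5}, when $a$ is constant, $ae^{\i\langle\theta,\cdot\rangle_{\C^d}}e_r\otimes e_s = e^{\i\langle\theta,\cdot\rangle_{\C^d}}(ae_r\otimes e_s)\in e^{\i\langle\theta,\cdot\rangle_{\C^d}}\C^{n\times d}$ is $L^2$-orthogonal to $\grad_\theta\phi$ for every $\phi\in\mathcal{H}_\theta$ by the Fourier argument; hence $N_\theta^{(rs)}\equiv 0$ solves \eqref{cellprob}, and uniqueness (Proposition \ref{p:ahomwd}) followed by direct evaluation of \eqref{ahomtheta} gives $a^{\textnormal{hom}}_{ijrs}(\theta)=a_{ijrs}$. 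The only slightly technical step is verifying the Fourier-based $L^2$-orthogonality at the outset; everything else reduces to routine bilinear-form manipulations with well-chosen test functions.
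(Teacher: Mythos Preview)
Your proof is correct and, for parts \ref{ahom1}--\ref{ahom4} and \ref{ahom2.5}, essentially identical to the paper's argument (your treatment of \ref{ahom2} is in fact a bit more explicit than the paper's one-line appeal to the Euler--Lagrange characterisation, and for \ref{ahom4} you fill in the passage from the quadratic-form bound to the operator-norm bound that the paper leaves implicit).

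For \ref{ahom4.5} and \ref{ahom5} you take a genuinely different, more hands-on route. The paper proves both via the operator identity \ref{ahom2.5}: for \ref{ahom4.5} it argues $\Re(\iota_\theta^*a\iota_\theta)^{-1}\geq \nu/\|a\|^2$ and then $\|a^{\textnormal{hom}}(\theta)\|=\|(a^{\textnormal{hom}}(\theta)^{-1})^{-1}\|\leq \|a\|^2/\nu$; for \ref{ahom5} it observes that a constant $a$ leaves $N_2(\theta)$ invariant, so $\iota_{N_2(\theta)}^*(\iota_\theta^*a\iota_\theta)^{-1}\iota_{N_2(\theta)}=a^{-1}$. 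You instead stay at the level of the bilinear form: for \ref{ahom4.5} you derive $\|Y_{N_{\theta X}}\|\leq(\|a\|/\nu)\|X\|$ by testing the cell problem with $\phi=N_{\theta X}$ (so that $\Re\langle aY_{N_{\theta X}},Y_{N_{\theta X}}\rangle=\Re\langle aY_{N_{\theta X}},e^{\i\langle\theta,\cdot\rangle}X\rangle$) and then apply Cauchy--Schwarz to $\langle aY_{N_{\theta X}},e^{\i\langle\theta,\cdot\rangle}Z\rangle$; for \ref{ahom5} you show $N_\theta^{(rs)}\equiv 0$ directly from the Fourier orthogonality and read off $a^{\textnormal{hom}}(\theta)=a$ from \eqref{ahomtheta}. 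Your approach is more elementary in that it never invokes the abstract inverse identity \ref{ahom2.5}; the paper's approach is slicker once that identity is in hand and makes the structural reason (invariance of $N_2(\theta)$) more transparent in \ref{ahom5}.
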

\begin{proof}
To prove \ref{ahom1}, we use \eqref{eq:hom1} and observe that 
\[
    \langle a(\grad_\theta N_{\theta X} +e^{\i\langle\theta,\cdot\rangle_{\C^d}}X),\grad_\theta N_{\theta Z}\rangle=0
\]as $N_{\theta X}\in [H_\theta^1(Y)\bot e^{\i\langle\theta,\cdot\rangle_{\C^d}}]^n$.
Next, the claim in \ref{ahom2} follows from the observation that \eqref{cellprob} is the Euler--Lagrange equation corresponding to the problem of finding the minimiser of the non-negative functional
\[
      [H_\theta^1(Y)\bot e^{\i\langle \theta,\cdot\rangle_{\C^d}}]^n\ni N\mapsto \Re \langle a(\grad_\theta N_\theta +e^{\i\langle\theta,\cdot\rangle_{\C^d}}X),\grad_\theta N_\theta +e^{\i\langle\theta,\cdot\rangle_{\C^d}}X\rangle.
  \]
  The assertion \ref{ahom2.5} is shown in \eqref{homforma}.
  For the proof of \ref{ahom3}, we let $X\in \mathbb{C}^{n\times d}$ and use \ref{ahom1} to obtain
  \begin{align*}
     \Re\langle a^{\textnormal{hom}}(\theta)X,X\rangle_{\mathbb{C}^{n\times d}}& =\Re\langle a(\grad_\theta N_{\theta X} +e^{\i\langle\theta,\cdot\rangle_{\C^d}}X),\grad_\theta N_{\theta X} +e^{\i\langle\theta,\cdot\rangle_{\C^d}}X\rangle
     \\ &\geq \nu \langle \grad_\theta N_{\theta X} +e^{\i\langle\theta,\cdot\rangle_{\C^d}}X,\grad_\theta N_{\theta X} +e^{\i\langle\theta,\cdot\rangle_{\C^d}}X\rangle
     \\ &= \nu (\|\grad_\theta N_{\theta X}\|^2+\|X\|^2)
     \\ &\geq \nu\|X\|^2,
  \end{align*}
 where we used Pythagoras' identity as $\grad_\theta N_{\theta X}\bot e^{\i\langle\theta,\cdot\rangle_{\C^d}}X$.
 
 In order to prove \ref{ahom4}, we shall use \ref{ahom2}. Indeed, for all $X\in \mathbb{C}^{n\times d}$, we obtain
 \begin{align*}
    \Re\langle a^{\textnormal{hom}}(\theta)X,X\rangle_{\mathbb{C}^{n\times d}}&=\inf_{N\in [H_\theta^1(Y)\bot e^{\i\langle \theta,\cdot\rangle_{\C^d}}]^n} \Re\langle a(\grad_\theta N_\theta +e^{\i\langle\theta,\cdot\rangle_{\C^d}}X),\grad_\theta N_\theta +e^{\i\langle\theta,\cdot\rangle_{\C^d}}X\rangle 
    \\&\leq \Re    \langle a e^{\i\langle\theta,\cdot\rangle_{\C^d}}X,e^{\i\langle\theta,\cdot\rangle_{\C^d}}X\rangle \leq \|\Re a\|\|X\|^2
 \end{align*}
 The proof of \ref{ahom4.5} uses \ref{ahom2.5}. From the inequality $\Re a\geq \nu 1_{\C^{n \times d}}$, we infer that $\Re \iota_\theta^* a \iota_\theta\geq \nu 1_{\C^{n \times d}}$. Hence, $\Re (\iota_\theta^*a\iota_\theta)^{-1}\geq \nu/(\|a\|^2) 1_{\C^{n \times d}}$. Thus, 
 \[
     \|a^{\textnormal{hom}}(\theta)\|=\left\|\left(a^{\textnormal{hom}}(\theta)^{-1}\right)^{-1}\right\|\leq \tfrac{\|a\|^2}{\nu}.
 \]
        The last assertion follows from the observation that a constant $a$ leaves $N_2(\theta)$ and, hence, $P(\theta)$ invariant. Therefore, we obtain
        \begin{align*}
              a^{\textnormal{hom}}(\theta)^{-1}& = \iota_{N_2(\theta)}^*(\iota_\theta^*a\iota_\theta)^{-1}\iota_{N_2(\theta)}
              \\   & = \iota_{N_2(\theta)}^*\iota_\theta^*\iota_\theta a^{-1}\iota_{N_2(\theta)}
              \\   & = \iota_{N_2(\theta)}^*\iota_\theta^*\iota_\theta \iota_{N_2(\theta)}  a^{-1}
              \\   & = a^{-1}\qedhere
        \end{align*}
\end{proof}
\begin{proposition}
	\label{prop:ahomasym}
	There exists $\kappa>0$ such that for all $\theta \in \Theta$
	\[
	\Vert a^{\textnormal{hom}}(\theta) - a^{\textnormal{hom}}(0) \Vert_{N_2(\theta)}	\le \kappa|\theta|.
	\]
\end{proposition}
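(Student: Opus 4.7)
My plan is to exploit a change of variables that reduces the $\theta$-dependent cell problem \eqref{cellprob} to a family of variational problems on the fixed space $[H^1_\#(Y)\perp 1]^n$, with coefficients and right-hand side depending smoothly on $\theta$, and then show that the correctors are Lipschitz in $\theta$ at $\theta=0$. First I would observe that for $|\theta|$ bounded away from $0$ the desired inequality is automatic from the uniform bound $\|a^{\textnormal{hom}}(\theta)\| \leq \|a\|^2/\nu$ of Proposition~\ref{prop:ahom}\ref{ahom4.5}, so it suffices to treat $|\theta|$ small.

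Set $M^{(rs)}_\theta := e^{-\i\langle\theta,\cdot\rangle_{\mathbb{C}^d}}N^{(rs)}_\theta\in[H^1_\#(Y)\perp 1]^n$. Using the identity $\grad(e^{\i\langle\theta,\cdot\rangle_{\mathbb{C}^d}}M) = e^{\i\langle\theta,\cdot\rangle_{\mathbb{C}^d}}(\grad M + \i\theta\otimes M)$ and that $|e^{\i\langle\theta,\cdot\rangle}|=1$, the cell problem \eqref{cellprob} transfers to
\[
B_\theta(M^{(rs)}_\theta,\psi) = -\int_Y a(e_r\otimes e_s)\cdot\overline{\grad\psi + \i\theta\otimes\psi}\,dy \qquad (\psi\in[H^1_\#(Y)\perp 1]^n),
\]
where $B_\theta(\phi,\psi) := \int_Y a(\grad\phi + \i\theta\otimes\phi)\cdot\overline{\grad\psi+\i\theta\otimes\psi}\,dy$. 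Correspondingly, the representation of $\langle a^{\textnormal{hom}}(\theta)X,Z\rangle$ from Proposition~\ref{prop:ahom}\ref{ahom1} becomes a $y$-integral without phase factors, continuous in $\theta$ and in the $M^{(rs)}_\theta$. The bound $\|M^{(rs)}_0\|_{H^1}\leq C$ follows from Proposition~\ref{p:ahomwd}.

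The key step is the estimate $\|\grad(M^{(rs)}_\theta - M^{(rs)}_0)\|_{L^2}\leq C|\theta|$. Writing $\delta M := M^{(rs)}_\theta - M^{(rs)}_0 \in [H^1_\#(Y)\perp 1]^n$ and subtracting the equations at $\theta$ and at $0$ yields
\[
B_\theta(\delta M,\psi) = -(B_\theta - B_0)(M^{(rs)}_0,\psi) + \textnormal{(difference of right-hand sides)}.
\]
A direct term-by-term expansion shows $(B_\theta-B_0)(\phi,\psi) = O(|\theta|\,\|\phi\|_{H^1}\|\psi\|_{H^1})$ and the right-hand side difference is $O(|\theta|\,\|\psi\|_{H^1})$. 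Testing with $\psi=\delta M$, invoking the Poincaré inequality for mean-zero periodic fields (cf.\ Proposition~\ref{p:hypasse3}\ref{en:h31} at $\theta=0$), and using the lower bound $\Re B_\theta(\delta M,\delta M)\geq \tfrac{\nu}{4}\|\grad\delta M\|^2_{L^2}$ (valid for $|\theta|$ small), one obtains the claimed Lipschitz estimate. Substituting $M^{(rs)}_\theta = M^{(rs)}_0 + \delta M$ into the transformed formula for $a^{\textnormal{hom}}(\theta)$ and collecting terms, every contribution not present in $a^{\textnormal{hom}}(0)$ contains either a factor of $\delta M$ or an explicit factor of $\theta$, so the operator norm difference is $O(|\theta|)$.

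The principal obstacle is establishing the uniform coercivity $\Re B_\theta(\phi,\phi)\geq \tfrac{\nu}{4}\|\grad\phi\|^2_{L^2}$ on $[H^1_\#(Y)\perp 1]^n$ for small $|\theta|$; this requires absorbing the cross term $2\Re\int a\grad\phi\cdot\overline{\i\theta\otimes\phi}\,dy$ using Young's inequality together with Poincaré. Once this coercivity is in hand, the remaining work consists of the routine bookkeeping sketched above.
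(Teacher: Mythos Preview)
Your proposal is correct and follows essentially the same route as the paper: the same conjugation $M^{(rs)}_\theta = e^{-\i\langle\theta,\cdot\rangle}N^{(rs)}_\theta$ to transfer the cell problems to the fixed space $[H^1_\#(Y)\perp 1]^n$, then subtracting the $\theta$- and $0$-equations and testing against the difference. The one organisational difference worth noting is that the paper, instead of keeping $B_\theta$ on the left and proving its coercivity for small $|\theta|$, moves all $\theta$-dependent pieces to the right-hand side and keeps only $\langle a\,\grad_\#\delta M,\grad_\#\phi_0\rangle$ on the left; this form is coercive directly from $\Re a\geq\nu$ together with Poincar\'e, so the estimate holds uniformly for all $\theta\in\Theta$ and the case split between small and large $|\theta|$ (and your ``principal obstacle'') become unnecessary.
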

\begin{proof}
	As $N^{(rs)}_\theta$ solves \eqref{cellprob}, then Proposition \ref{p:hypasse3} \ref{en:h31} and Proposition \ref{p:ahomwd} imply that
	\begin{equation}
	\label{cellbound}
	\| N^{(rs)}_\theta \|_{[H^1(Y)]^n} \le \big( \pi^{-2} + 1 \big)^{1/2} \tfrac{\Vert a \Vert}{\nu}, \qquad ( \theta \in \Theta, \, r\in\{1,\ldots,n\}, s \in \{1,\ldots,d\}).
	\end{equation}
Using the notation in Proposition \ref{prop:ahom}, assertion \eqref{eq:hom1} implies  that
\begin{equation*}
\langle a^{\textnormal{hom}}(\theta)X,Z\rangle_{N_2(\theta)}=\langle a(\grad_\theta N_{\theta X} +e^{\i\langle\theta,\cdot\rangle_{\C^{d}}}X), e^{\i\langle\theta,\cdot\rangle_{\C^{d}}}Z\rangle \quad (X,Z \in \mathbb{C}^{n \times d});
\end{equation*}
This identity   yields
\begin{multline*}
	\langle	\left(a^{\textnormal{hom}}(\theta) - a^{\textnormal{hom}}(0) \right) X, Z \rangle_{N_2(\theta)}  = \langle a( \grad_\theta  N_{\theta X} - \grad_{\#}N_{0 X}), e^{\i\langle\theta,\cdot\rangle_{\C^{d}}}Z\rangle \\
	+   \langle a\grad_{\#}N_{0 X}, (e^{\i\langle\theta,\cdot\rangle_{\C^{d}}}-1)Z\rangle\quad (X,Z \in \mathbb{C}^{n \times d}).
\end{multline*}
 Consequently 
	\[
	\|a^{\textnormal{hom}}(\theta) - a^{\textnormal{hom}}(0) \| \leq \Vert a \Vert \Vert \grad_\theta  N_{\theta X} - \grad_{\#}N_{0 X}\Vert  + | \theta | \Vert a \grad_{\#}N_{0 X} \Vert.
	\]
Recalling \eqref{cellbound}, we observe that to prove the proposition it remains to demonstrate
	\begin{equation}
	\label{ahomasswant}
\exists \kappa>0 \, \forall \theta \in \Theta \quad \Vert \grad_\theta  N_{\theta X} - \grad_{\#}N_{0 X} \Vert
 \le \kappa | \theta| \Vert X \Vert.
	\end{equation}
By \eqref{cellprob}, one has for $N_{\theta X} =   \sum_{r=1}^n \sum_{s=1}^d  N^{(rs)}_\theta  X_{rs}$ that 
	\begin{align*}
	\langle a  \grad_\theta N_{\theta X}, \grad_\theta \phi \rangle = - \langle  a e^{\i \langle \theta, \cdot \rangle_{\mathbb{C}^d}} X ,  \grad_\theta \phi\rangle, \qquad (\phi \in [H^1_\theta(Y) \perp e^{\i \langle \theta, \cdot \rangle_{\mathbb{C}^d}}]^n),
	\end{align*}
	and
	\begin{align*}
	\langle a  \grad_\# N_{0 X}, \grad_\# \phi_0 \rangle = - \langle  a X,  \grad_\# \phi_0\rangle, \qquad (\phi_0 \in [H^1_\#(Y) \perp 1]^n).
	\end{align*}
Fix $\phi_0 \in [H^1_\#(Y) \perp 1]^n$, and set $\widetilde{N}_{ \theta X} = e^{-\i \langle \theta, \cdot \rangle_{\mathbb{C}^d}} N_{\theta X}$, $\phi =e^{\i \langle \theta, \cdot \rangle_{\mathbb{C}^d}} \phi_0$. Clearly $\widetilde{N}_{\theta X}$ belongs to  $[H^1_\#(Y) \perp 1]^n$ and $\phi \in[H^1_\theta(Y) \perp  e^{\i \langle \theta, \cdot \rangle_{\mathbb{C}^d}}]^n $. By the identity $\grad_\theta  e^{\i \langle \theta, \cdot \rangle_{\mathbb{C}^d}}  =  e^{\i \langle \theta, \cdot \rangle_{\mathbb{C}^d}} (\grad_\# + \i \theta) $, and the equation for $N_{\theta X}$, we calculate that $\widetilde{N}_{ \theta X}$  solves 
	\begin{align*}
	\langle a  \grad_\# \widetilde{N}_{ \theta X}, \grad_\# \phi_0 \rangle = - \langle  a  \grad_\# \widetilde{N}_{ \theta X}, \i \theta  \phi_0\rangle- \langle  a  ( \i \theta \widetilde{N}_{ \theta X} + X),  (\grad_\# + \i \theta)\phi_0\rangle.
	\end{align*}
	 Therefore, 
	\begin{equation}
	\label{ahomasse1}
	\langle  a  \grad_\# [ \widetilde{N}_{ \theta X}- N_{0 X}]  ,  \grad_\# \phi_0 \rangle = R_\theta(\phi_0),
	\end{equation}
	where
	\begin{align*}
	R_\theta(\phi_0) \coloneqq  - \langle  a  \grad_\# \widetilde{N}_{ \theta X}, \i \theta  \phi_0 \rangle- \langle  a   \i \theta \widetilde{N}_{ \theta X},  (\grad_\# + \i \theta)\phi_0 \rangle - \langle  a  X ,\i \theta \phi_0 \rangle.
	\end{align*}
	Utilising \eqref{cellbound} and Propostion \ref{p:hypasse3} \ref{en:h31} gives
	\[
	| R_\theta(\phi_0) | \le \kappa | \theta | \Vert X \Vert \| \phi_0 \|_{[H^1(Y)]^n} \le \kappa (1+ \pi^{-2})^{1/2} | \theta | \Vert X \Vert  \| \grad_{\#}\phi_0 \|_{[L^2(Y)]^{n \times d}} ,
	\]
	By setting $\phi_0 =  \widetilde{N}_{ \theta X} - N_{0 X} $, and recalling that $\Re a \ge \nu 1_{\C^{n \times d}}$ gives the inequality  \eqref{ahomasswant}. Hence, the proposition is proved.
\end{proof}

The last step in proving Theorem \ref{t.classical} is contained in the next proposition.

\begin{proposition}
	\label{p.ahomasy}
	There exists a constant $\kappa>0$ such that, for all $ \theta \in \Theta$, $\epsilon>0$, and $f\in \mathbb{C}^n$, $f_\theta\coloneqq e^{\i\langle \theta,\cdot\rangle_{\mathbb{C}^d}} f$ with
	\begin{align*}
	  \begin{pmatrix}
	   \beta_\theta
	   \\ M_\theta
	  \end{pmatrix}&\coloneqq \left( \begin{pmatrix}
	\m(s)  & 0 \\ 0& a^{\textnormal{hom}}(\theta)^{-1} 
	\end{pmatrix}   - \tfrac{1}{\ep}  \iota^*_{N(\theta)} \begin{pmatrix}
	0 &  - \dive_\theta \iota_\theta  \\ - \iota^*_\theta \grad_\theta  & 0 
	\end{pmatrix} \iota_{N(\theta)}   \right)^{-1}
	\begin{pmatrix}
	   f_\theta
	   \\ 0
	  \end{pmatrix},
	  \\
	  \begin{pmatrix}
	   \beta_\theta'
	   \\ M_\theta'
	  \end{pmatrix}&\coloneqq \left( \begin{pmatrix}
	\m(s)  & 0 \\ 0& a^{\textnormal{hom}}(0)^{-1} 
	\end{pmatrix}   - \tfrac{1}{\ep}  \iota^*_{N(\theta)} \begin{pmatrix}
	0 &  - \dive_\theta \iota_\theta  \\ - \iota^*_\theta \grad_\theta  & 0 
	\end{pmatrix} \iota_{N(\theta)}   \right)^{-1}
	\begin{pmatrix}
	    f_\theta
	   \\ 0
	  \end{pmatrix}	  
	\end{align*}
	 the following inequality
	\begin{equation*}
	\|\beta_\theta-\beta_\theta'\| \le \kappa \ep \|f_\theta\|
	\end{equation*}
	holds.
\end{proposition}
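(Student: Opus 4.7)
The plan is to exploit the finite-dimensionality of $N(\theta) = N_1(\theta) \oplus N_2(\theta)$ to reduce both defining systems to equations on $\mathbb{C}^n \oplus \mathbb{C}^{n\times d}$, perform a Schur-type elimination to obtain scalar $\mathbb{C}^n$-equations, and then apply the resolvent identity together with Proposition \ref{prop:ahomasym}. Via the canonical unitary identifications $N_1(\theta) \cong \mathbb{C}^n$, $N_2(\theta) \cong \mathbb{C}^{n\times d}$ given by $e^{\i \langle \theta,\cdot\rangle_{\mathbb{C}^d}}(\cdot)$, the identities $\grad_\theta (e^{\i \langle \theta,\cdot\rangle_{\mathbb{C}^d}} b) = \i e^{\i \langle \theta,\cdot\rangle_{\mathbb{C}^d}}(b \otimes \theta) \in N_2(\theta)$ and $\dive_\theta(e^{\i \langle \theta,\cdot\rangle_{\mathbb{C}^d}} m) = \i e^{\i \langle \theta,\cdot\rangle_{\mathbb{C}^d}}(m \theta) \in N_1(\theta)$ (the latter verified by the adjoint relationship $\dive_\theta=-\grad_\theta^*$) show that, writing $\beta_\theta = e^{\i \langle \theta, \cdot \rangle_{\mathbb{C}^d}} b$ and $M_\theta = e^{\i \langle \theta, \cdot \rangle_{\mathbb{C}^d}} m$, the defining system for $(\beta_\theta, M_\theta)$ is equivalent to
\[
\m(s)\, b + \tfrac{\i}{\epsilon}\, m \theta = f, \qquad a^{\textnormal{hom}}(\theta)^{-1} m + \tfrac{\i}{\epsilon}\, b \otimes \theta = 0,
\]
with the analogous system for $(b', m')$ obtained by replacing $a^{\textnormal{hom}}(\theta)$ by $a^{\textnormal{hom}}(0)$.

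Next, I would eliminate $m = -\tfrac{\i}{\epsilon} a^{\textnormal{hom}}(\theta)(b \otimes \theta)$ from the second equation to reduce to the scalar $\mathbb{C}^n$-equation $\bigl(\m(s) + \tfrac{1}{\epsilon^2} L_\theta\bigr) b = f$, where $L_\theta b := \bigl[a^{\textnormal{hom}}(\theta)(b \otimes \theta)\bigr] \theta$; analogously $\bigl(\m(s) + \tfrac{1}{\epsilon^2} L_\theta^0\bigr) b' = f$ with $L_\theta^0 b := [a^{\textnormal{hom}}(0)(b \otimes \theta)]\theta$. The resolvent identity then yields
\[
b - b' = -\bigl(\m(s) + \tfrac{1}{\epsilon^2} L_\theta\bigr)^{-1} \tfrac{1}{\epsilon^2} (L_\theta - L_\theta^0) \bigl(\m(s) + \tfrac{1}{\epsilon^2} L_\theta^0\bigr)^{-1} f.
\]
From $\Re a^{\textnormal{hom}}(\theta), \Re a^{\textnormal{hom}}(0) \ge \nu 1_{\mathbb{C}^{n\times d}}$ (Proposition \ref{prop:ahom}\ref{ahom3}) one has $\Re\langle L_\theta b,b\rangle = \Re\langle a^{\textnormal{hom}}(\theta)(b\otimes\theta),b\otimes\theta\rangle \ge \nu|\theta|^2\|b\|^2$, and likewise for $L_\theta^0$; hence, together with $\Re \m(s) \ge c$, a standard numerical-range argument (cf. Lemma \ref{l:invblty}) gives $\|(\m(s)+\tfrac{1}{\epsilon^2}L_\theta)^{-1}\|, \|(\m(s)+\tfrac{1}{\epsilon^2}L_\theta^0)^{-1}\| \le \tfrac{\epsilon^2}{c \epsilon^2 + \nu|\theta|^2}$. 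Proposition \ref{prop:ahomasym} provides $\|L_\theta - L_\theta^0\| \le \|a^{\textnormal{hom}}(\theta) - a^{\textnormal{hom}}(0)\|_{N_2(\theta)} |\theta|^2 \le \kappa |\theta|^3$. Combining these three estimates yields
\[
\| b - b' \| \le \frac{\kappa\, |\theta|^3\, \epsilon^2}{(c \epsilon^2 + \nu|\theta|^2)^2} \|f\|.
\]

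The main obstacle I anticipate is extracting a uniform $O(\epsilon)$ bound from this quotient, which I would handle by a case split on the size of $|\theta|$ relative to $\epsilon$. When $|\theta| \ge \epsilon$, the lower bound $(c \epsilon^2 + \nu |\theta|^2)^2 \ge \nu^2 |\theta|^4$ gives $\tfrac{|\theta|^3 \epsilon^2}{(c \epsilon^2 + \nu |\theta|^2)^2}\le \tfrac{\epsilon^2}{\nu^2 |\theta|} \le \tfrac{\epsilon}{\nu^2}$; when $|\theta| < \epsilon$, using instead $(c \epsilon^2 + \nu |\theta|^2)^2 \ge c^2 \epsilon^4$ gives the bound $\tfrac{|\theta|^3}{c^2 \epsilon^2} < \tfrac{\epsilon}{c^2}$. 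Since the identification $N_1(\theta) \cong \mathbb{C}^n$ is unitary, $\|\beta_\theta - \beta_\theta'\|_{[L^2(Y)]^n} = \|b - b'\|_{\mathbb{C}^n}$ and $\|f_\theta\|_{[L^2(Y)]^n} = \|f\|_{\mathbb{C}^n}$, which would complete the argument with $\kappa = \max\{1/c^2, 1/\nu^2\}$ times the constant inherited from Proposition \ref{prop:ahomasym}.
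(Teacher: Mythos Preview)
Your proposal is correct and follows essentially the same route as the paper: reduce via the unitary identification $N(\theta)\cong\mathbb{C}^n\oplus\mathbb{C}^{n\times d}$, eliminate the flux variable to obtain $(\m(s)+\epsilon^{-2}L_\theta)b=f$, use coercivity for the resolvent bounds and Proposition~\ref{prop:ahomasym} for the perturbation, and extract the $O(\epsilon)$ bound from the resulting quotient. The only cosmetic differences are that the paper subtracts the two equations rather than invoking the resolvent identity (these are of course the same computation), and that the paper handles the final quotient via the single substitution $t=|\theta|/\epsilon$ and $\sup_{t\in\mathbb{R}}\tfrac{t^3}{(1+t^2)^2}<\infty$ rather than your case split on $|\theta|\gtrless\epsilon$.
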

\begin{proof} Fix $\theta \in \Theta$. Recall that 
	\begin{equation*}
N(\theta) = N_1(\theta) \oplus N_2(\theta), \quad	N_1(\theta)\coloneqq e^{\i \langle\theta, \cdot \rangle_{\mathbb{C}^d}} \mathbb{C}^n,\text{ and }N_2(\theta)\coloneqq e^{\i \langle\theta, \cdot \rangle_{\mathbb{C}^d}}\mathbb{C}^{n \times d}. \end{equation*}
By direct calculation, it follows that $ \rge (\dive_\theta \vert_{N_2(\theta)} ) \subseteq N_1(\theta)$, $\rge ( \grad_\theta \vert_{N_1(\theta)} ) \subseteq N_2(\theta)$ and that
$$
\iota^*_{N(\theta)} \begin{pmatrix}
0 &  - \dive_\theta \iota_\theta  \\ - \iota^*_\theta \grad_\theta  & 0 
\end{pmatrix} \iota_{N(\theta)} \left( \begin{matrix}
e^{\i \langle\theta, \cdot \rangle_{\mathbb{C}^d}} \beta  \\ e^{\i \langle\theta, \cdot \rangle_{\mathbb{C}^d}} M
\end{matrix}  \right) = -\left( \begin{matrix}
e^{\i \langle\theta, \cdot \rangle_{\mathbb{C}^d}} M \i \theta  \\ e^{\i \langle\theta, \cdot \rangle_{\mathbb{C}^d}} \beta \otimes \i \theta 
\end{matrix}  \right), \quad (\beta \in\mathbb{C}^d, M \in \mathbb{C}^{n \times d} ).
$$

Let us now prove the desired assertion. For $f \in \mathbb{C}^n$, consider the problem: Find $(\beta,M) \in \C^n \oplus \C^{n \times d}=N(0)$ such that
\[
 \begin{pmatrix}
\m(s)  & 0 \\ 0& a^{\textnormal{hom}}(\theta)^{-1} 
\end{pmatrix}   \begin{pmatrix}
e^{\i \langle\theta, \cdot \rangle_{\mathbb{C}^d}}\beta \\ e^{\i \langle\theta, \cdot \rangle_{\mathbb{C}^d}} M 
\end{pmatrix}  - \tfrac{1}{\ep}  \begin{pmatrix}
e^{\i \langle\theta, \cdot \rangle_{\mathbb{C}^d}} M \i \theta  \\ e^{\i \langle\theta, \cdot \rangle_{\mathbb{C}^d}} \beta \otimes \i \theta 
\end{pmatrix}   = \begin{pmatrix}
e^{\i \langle\theta, \cdot \rangle_{\mathbb{C}^d}}f \\ 0
\end{pmatrix},
\]
equivalently $
M = \tfrac{1}{\ep} a^{\textnormal{hom}}(\theta)  \beta \otimes \i \theta,
$ and
\begin{equation*}
 \ep^{-2} a^{\textnormal{hom}}(\theta) (  \beta \otimes \theta)     \theta + \m(s)  \beta = f.
\end{equation*}
By taking the inner product on both sides of the above identity with $\beta$, we calculate
\begin{equation}
\label{limitbd}
\big( \ep^{-2} \nu |   \theta |^2 + \nu \big) \Vert \beta \Vert\le \Vert f \Vert,
\end{equation}
where $\nu>0$ is such that for all $\theta \in \Theta$ we have $\Re a^{\textnormal{hom}}(\theta) \ge \nu 1_{\C^{n \times d}}$, and $\Re \m(s) \ge \nu 1_{\mathbb{C}^{n}}$; note that such $\nu$ exists by Proposition \ref{prop:ahom}.

Now, let $\beta' \in \C^n$ solve 
\begin{equation*}
\ep^{-2} a^{\textnormal{hom}}(0) ( \beta' \otimes  \theta)   \theta + \m(s)  \beta' = f.
\end{equation*}
It follows that 
\begin{equation*}
\ep^{-2}a^{\textnormal{hom}}(0)  (( \beta - \beta') \otimes \theta ) \theta + \m(s) ( \beta - \beta') =  \ep^{-2}  ( a^{\textnormal{hom}}(0) - a^{\textnormal{hom}}(\theta)) (\beta \otimes \theta  ) \theta .
\end{equation*}
Therefore, arguing as in the derivation of inequality \eqref{limitbd} with $a^{\textnormal{hom}}(0)$ instead of $a^{\textnormal{hom}}(\theta)$ and $f=\ep^{-2}  ( a^{\textnormal{hom}}(0) - a^{\textnormal{hom}}(\theta)) (\beta \otimes \theta  ) \theta$, we deduce that
\begin{flalign*}
\big( \ep^{-2} \nu |\theta |^2 + \nu \big) \Vert \beta - \beta' \Vert & \le \Vert  \ep^{-2}  [ a^{\textnormal{hom}}(0) - a^{\textnormal{hom}}(\theta)]    (\beta \otimes \theta ) \theta \Vert \\
& \le \ep^{-2}\Vert a^{\textnormal{hom}}(0) - a^{\textnormal{hom}}(\theta) \Vert | \theta |^2 \Vert \beta \Vert. 
\end{flalign*}
Consequently, by considering  Proposition \ref{prop:ahomasym} and \eqref{limitbd} for arbitrary $f\in\mathbb{C}^n$ again,  we deduce that
$$
\Vert \beta - \beta' \Vert \le \kappa \tfrac{\ep^{-2} | \theta |^3}{\nu^2 \big( \ep^{-2} |   \theta |^2 + 1 \big)^2} \Vert f \Vert \le C \ep  \Vert f \Vert
$$
for $C = \kappa \sup_{t \in \R} \tfrac{t^3}{\nu^2(1+t^2)^2}$. Hence, Proposition \ref{p.ahomasy} holds.
\end{proof}
\begin{proof}[Proof of Theorem \ref{t.classical}]
In Proposition \ref{prop:ahom}  we established that $a^{\textnormal{hom}}(\theta) \in \mathcal{M}_{n,d}^{\#}$ and that if $a \in \mathcal{M}_{n,d}^{\#}$ is constant then  $a^{\textnormal{hom}}(\theta) = a$ for all $\theta \in \Theta$. Furthermore, if $s \in \mathcal{S}_{n,d}^{\#}$ is constant then it clearly follows that $\m(s) =s $. Thus, by utilising Theorem \ref{t.ahom2} twice, once for $a$ and $s$, and again for $a^{\textnormal{hom}}(0)$ and $\m(s)$, we conclude that Theorem \ref{t.classical} follows from Proposition \ref{p.ahomasy}.
\end{proof}

\subsection*{An example of when $a^{\rm hom}(\theta) \neq a^{\rm hom}(0)$.}
Let us recall the well-known result  that if $a \in \mathcal{M}^{\#}_{n,d}$ is self-adjoint and satisfies the assumption $a X \in \ker(\dive_{\#})$ for all $X\in\mathbb{C}^{n\times d}$ then
\[
a^{\rm hom}(0) = a^{\rm hom} = \langle a \rangle\coloneqq \int_Y a(y)dy.
\]
For the reader's convenience we shall reprove this result here (for further information see for example \cite[Section 1.6]{JKO}). The claimed identity can be immediately seen by noting that, for such an $a$, problem \eqref{cellprob} takes the form: Find $N_{0}^{(rs)}\in [H^1_{\#}(Y) \perp 1]^n$ such that
\[
\langle a \grad_{\#} N_{0}^{(rs)}, \grad_{\#} \phi \rangle =0, \qquad (\phi \in [H^1_{\#}(Y) \perp 1]^n),
\]Indeed, this follows from
\[
\langle a e_r\otimes e_s , \grad_{\#} \phi \rangle =-\langle \dive_{\#} a e_r\otimes e_s ,  \phi \rangle=0, \qquad (\phi \in [H^1_{\#}(Y) \perp 1]^n),
\]
Consequently, $N_0^{(rs)}=0$ and from \eqref{ahomtheta} we deduce that $a^{\rm hom}(0) = \langle a \rangle$.

We shall use this observation to demonstrate that in general $a^{\rm hom}(\theta) \neq a^{\rm hom}(0)$ for $\theta \neq 0$. Indeed, the following result holds.
\begin{proposition}
\label{p:ahomdifferent}
Assume  $a \in \mathcal{M}^{\#}_{n,d}$ is self-adjoint with $ aX \in \ker(\dive_{\#})$ for all $X\in\mathbb{C}^{n\times d}$. Then,
\[
a^{\rm hom}(\theta) = a^{\rm hom}(0) \quad (\theta \neq 0)
\]
if, and only if, $a$ is constant.
\end{proposition}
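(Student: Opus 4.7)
The direction ``$a$ constant $\Rightarrow a^{\rm hom}(\theta)\equiv a^{\rm hom}(0)$'' is immediate from Proposition \ref{prop:ahom}\ref{ahom5}. For the converse, I would assume $a^{\rm hom}(\theta)=\langle a\rangle$ for every $\theta\in\Theta$ and aim to show that $a$ must be constant.

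The plan is to first recast the cell problem in genuinely periodic coordinates via $\tilde N_{\theta X}\coloneqq e^{-\i\langle\theta,\cdot\rangle_{\C^d}}N_{\theta X}\in [H^1_\#(Y)\perp 1]^n$, with $N_{\theta X}\coloneqq \sum_{r,s}X_{rs}N_\theta^{(rs)}$ for $X\in\C^{n\times d}$. Using the identity $\grad_\theta(e^{\i\langle\theta,\cdot\rangle_{\C^d}}u)=e^{\i\langle\theta,\cdot\rangle_{\C^d}}(\grad_\# u + u\otimes \i\theta)$ (already exploited in the proof of Proposition \ref{prop:ahomasym}) and then invoking self-adjointness $a^*=a$ together with the key hypothesis $aZ\in\ker\dive_\#$ for all $Z\in\C^{n\times d}$ to kill, by integration by parts, the contribution of $\grad_\#\tilde N_{\theta X}$, I expect to derive from Proposition \ref{prop:ahom}\ref{ahom1} the clean identity
\[
\langle a^{\rm hom}(\theta)X, Z\rangle_{\C^{n\times d}} = \langle \langle a\rangle X, Z\rangle_{\C^{n\times d}} + \i\int_Y a(y)\bigl(\tilde N_{\theta X}(y)\otimes\theta\bigr):\bar Z\,dy.
\]
The hypothesis $a^{\rm hom}(\theta)\equiv\langle a\rangle$ therefore forces $\int_Y a(\tilde N_{\theta X}\otimes\theta)\,dy=0\in\C^{n\times d}$ for every $X$ and every $\theta$.

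Since $\tilde N_{0X}=0$ (by the discussion just preceding the proposition), the next step is a Taylor expansion $\tilde N_{\theta X}=\theta_m\tilde M_m^X + O(|\theta|^2)$, valid by analytic perturbation of the coercive sesquilinear form underlying the periodic cell problem (Lax--Milgram as in Proposition \ref{p:ahomwd}). Plugging this ansatz into the periodic cell equation and collecting the $\theta^1$-coefficients pins down $\tilde M_m^X\in [H^1_\#(Y)\perp 1]^n$ as the unique solution of
\[
-\dive_\#\bigl(a\grad_\#\tilde M_m^X\bigr)_k = \i\bigl[(aX)_{km}-\langle aX\rangle_{km}\bigr] \qquad (k\in\{1,\ldots,n\}).
\]
Substituting the expansion into $\int_Y a(\tilde N_{\theta X}\otimes\theta)\,dy=0$ and extracting the $\theta_l^2$-diagonal coefficient yields $\sum_k\int_Y a_{ijkl}(\tilde M_l^X)_k\,dy=0$ for all indices $i,j,l$ and all $X$. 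Taking complex conjugates and using Hermitian symmetry $a_{ijkl}=\overline{a_{klij}}$ translates this into $\int_Y (aE)_{\cdot l}\cdot\overline{\tilde M_l^X}\,dy=0$ for every $E\in\C^{n\times d}$; specialising to $E=X$ gives $\int_Y (aX)_{\cdot l}\cdot\overline{\tilde M_l^X}\,dy=0$.

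Finally, I would apply the energy identity obtained by testing the $\tilde M_l^X$-PDE against $\tilde M_l^X$ itself (using $\tilde M_l^X\perp 1$ to drop the constant mean term on the right-hand side):
\[
\int_Y a\grad_\#\tilde M_l^X:\overline{\grad_\#\tilde M_l^X}\,dy = \i\int_Y (aX)_{\cdot l}\cdot\overline{\tilde M_l^X}\,dy = 0.
\]
Coercivity of $a$ then forces $\grad_\#\tilde M_l^X=0$, and the mean-zero condition yields $\tilde M_l^X\equiv 0$. Feeding this back into the defining PDE for $\tilde M_l^X$ gives $(aX)_{kl}(y)=\langle aX\rangle_{kl}$ a.e.\ in $Y$ for every $k,l$ and every $X\in\C^{n\times d}$; that is, $a(\cdot)X$ is constant for every $X$, so $a$ is constant. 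I anticipate that the main technical obstacle is the careful quasi-periodic bookkeeping needed to arrive at the displayed identity for $\langle a^{\rm hom}(\theta)X, Z\rangle$ — in particular, correctly eliminating the $\grad_\#\tilde N_{\theta X}$ cross-term using both self-adjointness and the divergence-free hypothesis — while the subsequent perturbative expansion and the final energy argument are essentially routine.
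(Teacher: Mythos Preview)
Your argument is correct, but it takes a longer road than the paper's. The paper works directly at each fixed $\theta\neq 0$: from the same starting identity (essentially your displayed formula with $Z=X$) it reads off $\langle a\grad_\theta N_{\theta X},e^{\i\langle\theta,\cdot\rangle}X\rangle=0$, then tests the $\theta$-cell problem \eqref{cellprob} with $\phi=N_{\theta X}$ and uses $a=a^*$ to obtain $\langle a\grad_\theta N_{\theta X},\grad_\theta N_{\theta X}\rangle=0$, hence $N_{\theta X}=0$. Substituting back into the cell problem and applying the same periodic-gauge identity once more yields that $(aX)\theta$ is constant in $y$ for every $X$ and every $\theta$, which forces $a$ constant. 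Your route instead differentiates $\tilde N_{\theta X}$ at $\theta=0$, runs the energy argument on the first-order corrector $\tilde M_l^X$, and reads off $aX=\langle aX\rangle$ from its defining PDE. Both proofs rest on exactly the same two ingredients---the gauge identity killing the $\grad_\#$ cross-term, and a coercivity/energy step---but the paper applies them once per $\theta$, whereas you apply them after a perturbation expansion that requires justifying differentiability of $\theta\mapsto\tilde N_{\theta X}$ and some additional index bookkeeping. The payoff of your approach is that it shows the conclusion already follows from the hypothesis in an arbitrarily small neighbourhood of $\theta=0$; the paper's argument is shorter and needs no regularity in $\theta$.
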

\begin{remark}
For the case $n=d=1$, then the condition $a X = a\cdot X\in \ker(\dive_{\#})$ for all $X\in\mathbb{C}$ (i.e.~$a\in\ker(\dive_\#)$) automatically implies that $a$ is constant. In fact, for the one-dimensional scalar case one does not require the assumption $a \in \ker(\dive_{\#})$ to deduce that $a^{\textnormal{hom}}(\theta)=a^{\textnormal{hom}}(0)$. That is, for any $a \in \mathcal{M}^{\#}_{1,1}$, one can show by direct calculation that $a^{\rm hom}(\theta) = a^{\rm hom}(0)$ for all $\theta \in \Theta$.
\end{remark}
\begin{proof}[Proof of Proposition \ref{p:ahomdifferent}]
Fix $\theta \in \Theta \backslash \{ 0 \}$, $X \in \C^d$. Let $ N_{\theta X} \in [H^1_{\theta}(Y) \perp e^{{\i\langle\theta,\cdot\rangle_{\C^d}}}]^n$ solve

\begin{equation}
\label{pahomdife0}
\langle a(\grad_\theta N_{\theta X} +e^{\i\langle\theta,\cdot\rangle_{\C^d}}X),\grad_\theta \phi \rangle=0, \qquad (\phi \in [H^1_{\theta}(Y) \perp e^{{\i\langle\theta,\cdot\rangle_{\C^d}}}]^n).
\end{equation}
Recalling Proposition \ref{prop:ahom} \ref{ahom1} we deduce that
\begin{equation}
\label{pahomdife1}
\begin{aligned}
\langle a^{\rm hom}(\theta) X, X \rangle_{\C^{n \times d}} & = \langle a\grad_\theta N_{\theta X}, e^{\i\langle\theta,\cdot\rangle_{\C^d}}X\rangle + \langle a e^{\i\langle\theta,\cdot\rangle_{\C^d}}X,e^{\i\langle\theta,\cdot\rangle_{\C^d}}X\rangle \\
& = \langle a\grad_\theta N_{\theta X}, e^{\i\langle\theta,\cdot\rangle_{\C^d}}X\rangle + \langle a \rangle X \cdot X \\
& = \langle a\grad_\theta N_{\theta X}, e^{\i\langle\theta,\cdot\rangle_{\C^d}}X\rangle + \langle a^{\rm hom}(0) X , X\rangle.
\end{aligned}
\end{equation}
Therefore, the identity $a^{\rm hom}(\theta) = a^{\rm hom}(0)$  holds if, and only if, $\langle a\grad_\theta N_{\theta X}, e^{\i\langle\theta,\cdot\rangle_{\C^d}}X\rangle=0$. Note, from the assumptions $a = a^*$ and $aX \in \ker(\dive_{\#})$ for all $X\in\mathbb{C}^{n\times d}$, and the identity 
\[
e^{\i \langle \theta , \cdot \rangle_{\C^d}} \grad_{\#} e^{ -\i \langle \theta , \cdot \rangle_{\C^d}} = \grad_{\theta} + \i \theta,
\] we deduce that
\begin{equation}
\label{pahomdife0.1}
\langle a\grad_\theta \phi, e^{\i\langle\theta,\cdot\rangle_{\C^d}}X\rangle =  - \langle a \i \theta \phi, e^{\i\langle\theta,\cdot\rangle_{\C^d}}X\rangle, \quad (\phi \in [H^1_{\theta}(Y)\perp e^{\i\langle\theta,\cdot\rangle_{\C^d}}]^n).
\end{equation}
Then, from the above assertion it follows that
\[
\langle a\grad_\theta N_{\theta X}, e^{\i\langle\theta,\cdot\rangle_{\C^d}}X\rangle = - \langle a \i \theta N_{\theta X}, e^{\i\langle\theta,\cdot\rangle_{\C^d}}X\rangle.
\]
If we assume $a$ is constant, then the term on the right-hand side of the above equation vanishes  because $N_{\theta X} \perp e^{\i \langle \theta, \cdot \rangle_{\C^d}} \C^d$. Therefore, $a^{\rm hom}(\theta) = a^{\rm hom}(0)$.

Let us assume that $a^{\rm hom}(\theta) = a^{\rm hom}(0)$. We shall now prove that $a$ must necessarily be constant. By \eqref{pahomdife1} it follows that 
\begin{equation}
\label{pahomdife2}
\langle a\grad_\theta N_{\theta X}, e^{\i\langle\theta,\cdot\rangle_{\C^d}}X\rangle=0.
\end{equation}
Equation \eqref{pahomdife2}, the fact $a^* = a$ and setting $\phi = N_{\theta X}$ in \eqref{pahomdife0}, gives 
\[
\langle a\grad_\theta N_{\theta X}, \grad_\theta N_{\theta X}\rangle = 0.
\]
That is, $N_{\theta X} = 0$ and \eqref{pahomdife0} takes the form
\[
\langle a e^{\i\langle\theta,\cdot\rangle_{\C^d}}X,\grad_\theta \phi \rangle=0, \qquad (\phi \in [H^1_{\theta}(Y) \perp e^{{\i\langle\theta,\cdot\rangle_{\C^d}}}]^n).
\]
This in turn, combined with \eqref{pahomdife0.1} and the fact $a=a^*$ implies that
\[
 \langle   a e^{\i\langle\theta,\cdot\rangle_{\C^d}}X, \i \theta \phi\rangle = 0, \quad (\phi \in [H^1_{\theta}(Y)\perp e^{{\i\langle\theta,\cdot\rangle_{\C^d}}}]^n).
\]
That is,
\[
a e^{\i\langle\theta,\cdot\rangle_{\C^d}}X \i \theta  \in e^{\i\langle\theta,\cdot\rangle_{\C^d}} \C^d
\]
which can only be true if $a$ is constant.
\end{proof}
\begin{example}
 We give a small concrete example that the set of $a$ satisfying the conditions imposed in Proposition \ref{p:ahomdifferent} is non-trivial. For this, let $n=1$, $d=2$. Take $\phi,\psi\in C_c^\infty(0,1;\mathbb{R})$ with $\phi, \psi \ge 0$. Then define 
 \[
    b\colon Y\ni (y_1,y_2)\mapsto \begin{pmatrix}
                           \phi(y_2) & 0
                        \\ 0 & \psi(y_1)
                        \end{pmatrix}. \]
As the entries of $b$ are non-negative, $a\coloneqq b+ 1_{\mathbb{C}^3} \in\mathcal{M}_{1,2}^\#$. Moreover, $a=a^*$. The divergence condition, that is both of the columns of $a$ are in the kernel of $\dive_\#$, is easy to see.
\end{example}

\section{Application of fibre homogenisation to equations of Maxwell type}
\label{s:max}
In this section, we shall demonstrate the utility of our approach in the context of Maxwell's equations. That is to say, we shall treat the following static variant of Maxwell's equations:
\[
    \left(\begin{pmatrix}
       \epsilon\big( \tfrac{\cdot}{\eta} \big) & 0 \\ 0 & \mu\big( \tfrac{\cdot}{\eta} \big)
    \end{pmatrix} + \begin{pmatrix} 0 & -\curl \\  \curl&0 \end{pmatrix}\right)\begin{pmatrix} E^\eta \\ H^\eta \end{pmatrix} = \begin{pmatrix} J \\ 0\end{pmatrix} \quad \text{on $\mathbb{R}^3$.}
\]
For consistency with notation in the literature, where $\ep$ is often reserved for the dielectric permittivity, we denote $\eta \in (0,\infty)$ to be the  parameter. Here, $J$, $\ep$, $\mu$ are given and the unknowns $E^\eta,$ and $H^\eta$ are the electric and magnetic fields respectively. A system of the type may occur, for example, when considering the resolvent problem for the  Maxwell system  in the frequency domain at a fixed frequency. The operator $\curl$ is acting as 
\[
    \curl (E_j)_{j\in\{1,2,3\}}\coloneqq \begin{pmatrix} \partial_2 E_3 - \partial_3 E_2 \\ \partial_3 E_1 -\partial_1 E_3 \\ \partial_1 E_2-\partial_2 E_1\end{pmatrix}
\]
realised as an operator in $[L^2(\mathbb{R}^3)]^3$. Note that $\curl$, thus defined, is selfadjoint.

Henceforth, we consider $\eps,\mu\in \mathcal{M}_{1,3}^\#$, that is  we assume that 
\[
\epsilon,\mu\in L^\infty(\mathbb{R}^3;L(\mathbb{C}^3))
\]
are $Y$-periodic and satisfy $\Re\eps(x),\Re\mu(x)\geq \nu$ for some $\nu>0$ and a.e.\ $x\in\mathbb{R}^3$. As the operator $\curl$ is selfadjoint, then by Lemma \ref{l:invblty}, we deduce that for a given $J \in [L^2(\mathbb{R}^3)]^3$ there exists a unique pair $(E^\eta, H^\eta) \in [\dom(\curl_\theta)]^2$ to the above Maxwell system. The rest of the section focuses on describing the small $\eta$ behaviour of this solution via the approach described in Section \ref{s:gen}.

Let $\mathcal{U}_\eta$ be the Gelfand transform introduced in Section \ref{s:example},  Definition \ref{def.gt}. The following result states that $U_\eta$ interacts with $\curl$ in a similar way to its interaction with $\grad$ and $\dive$. 
\begin{proposition}\label{p:cutim}
  For all $\eta>0$, we have
  \[
     \mathcal{U}_\eta \curl \mathcal{U}_\eta^{-1} =  \int_{[-\pi,\pi)^3}^\oplus \tfrac{1}{\eta}\curl_\theta d\theta,
  \]
  where $\curl_\theta \coloneqq \overline{\curl|_{[H^1_\theta(Y)]^3}}$ with the closure performed as an operation within $[L^2(Y)]^3$.
\end{proposition}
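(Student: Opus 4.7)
The plan is to follow the template of Proposition \ref{t:guder}\ref{en:guder1}, which established the analogous identity for $\grad$. The first step is an explicit computation on smooth compactly supported test functions: for $\phi \in [C^\infty_c(\mathbb{R}^3)]^3$, one uses Definition \ref{def.gt}, the chain rule $(\partial_j\phi)(\eta(y+k)) = \eta^{-1}\partial_{y_j}[\phi(\eta(y+k))]$, and the explicit expression for $\curl$ to obtain termwise
\[
\mathcal{U}_\eta \curl \phi = \tfrac{1}{\eta}\curl_\theta \mathcal{U}_\eta \phi,
\]
adopting the same abuse of notation as in the discussion preceding Proposition \ref{t:guder} (the operator $\curl_\theta$ is applied in the $y$-variable to the $\theta$-quasi-periodic function $(\mathcal{U}_\eta \phi)(\theta,\cdot)$). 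For each fixed $\theta$, this function is smooth and $\theta$-quasi-periodic, hence belongs to $[C^\infty_\theta(Y)]^3 \subseteq \dom(\curl_\theta)$, so the right-hand side is well defined.

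The second step extends this identity on test functions to the full operators via closedness and unitarity. Since $[C^\infty_c(\mathbb{R}^3)]^3$ is a core for the selfadjoint $L^2$-realisation of $\curl$, since $\mathcal{U}_\eta$ is unitary, and since $\int_\Theta^\oplus \eta^{-1}\curl_\theta\, d\theta$ is closed as a direct integral of closed operators, the identity above yields
\[
\curl \subseteq \mathcal{U}_\eta^{-1} \int_\Theta^\oplus \tfrac{1}{\eta}\curl_\theta\, d\theta\, \mathcal{U}_\eta.
\]
The converse inclusion is obtained in the same spirit by starting from elements of $[C^\infty_\theta(Y)]^3$, which form a core for $\curl_\theta$ by the very definition of $\curl_\theta$ together with the density of $C^\infty_\theta(Y)$ in $H^1_\theta(Y)$, and pulling back through $\mathcal{U}_\eta^{-1}$. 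Alternatively, once one inclusion is in hand, equality follows from selfadjointness: $\curl$ is selfadjoint by the remark preceding Proposition \ref{p:cutim}, and each $\curl_\theta$ is selfadjoint because it is the quasi-periodic realisation of a symmetric constant-coefficient first-order operator on the torus, as is seen immediately by Fourier decomposition in the orthonormal basis $\{e^{\i \langle \theta+2\pi z, \cdot \rangle_{\mathbb{C}^3}}\}_{z\in\mathbb{Z}^3}$; hence the fibre integral is also selfadjoint, and two selfadjoint operators satisfying an inclusion must coincide.

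The step I expect to require the most care is the passage from the pointwise identity on $[C^\infty_c(\mathbb{R}^3)]^3$ to the full fibre-integral operator, where one must check that the image $\mathcal{U}_\eta([C^\infty_c(\mathbb{R}^3)]^3)$ is rich enough in the domain of the fibre integral to pin down equality and not just inclusion. This is resolved either by the selfadjointness reduction sketched above, or by a direct truncation-and-approximation argument in the $(\theta,y)$ variables, entirely parallel to the density argument implicit in the proof of Proposition \ref{t:guder}. Apart from this, the proof contains no genuinely new computational content beyond the termwise chain-rule identity of the first step.
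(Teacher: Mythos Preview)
Your proposal is correct and follows exactly the route the paper intends: the paper omits the proof and simply states it is analogous to Proposition \ref{t:guder}, and your two-step argument (chain-rule identity on $[C_c^\infty(\mathbb{R}^3)]^3$, then extension by closedness and cores) is precisely that analogy. The selfadjointness shortcut you offer for the converse inclusion is a legitimate alternative to the paper's double-core argument; note only that the paper formally records $\curl_\theta=\curl_\theta^*$ slightly later (Lemma \ref{l:foser}), so the core-based route matches the paper's logical order more closely.
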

As the proof of this fact is analogous to the proof of Proposition \ref{t:guder} it is omitted.

The anticipated homogenisation theorem we deduce as a consequence of following our general abstract procedure reads as follows:
\begin{theorem}\label{t:maxhom} Let $\epsilon,\mu\in\mathcal{M}_{1,3}^\#$. Then, there exists $\kappa>0$ such that for all $\eta>0$ we have
\begin{multline*}
  \left\| \left(\begin{pmatrix}
       \epsilon\big( \tfrac{\cdot}{\eta} \big) & 0 \\ 0 & \mu\big( \tfrac{\cdot}{\eta} \big)
    \end{pmatrix} + \begin{pmatrix} 0 & -\curl \\ \curl & 0 \end{pmatrix}\right)^{-1} \right.- 
    \\  \mathcal{U}_\eta^{-1}\int_{[-\pi,\pi)^3}^\oplus \left. \left(\begin{pmatrix}
      \pi_{n(\theta)} \epsilon \pi_{n(\theta)} & 0 \\ 0 & \pi_{n(\theta)}\mu \pi_{n(\theta)}
    \end{pmatrix} + \frac{1}{\eta}\begin{pmatrix} 0 & -\curl_\theta \\  \curl_\theta & 0 \end{pmatrix}\right)^{-1}d\theta \mathcal{U}_\eta\right\|
     \leq \kappa \eta,
\end{multline*}
where 
\[
   \textnormal{n}(\theta) = e^{\i \langle\theta,\cdot\rangle_{\C^3}}\mathbb{C}^3 \oplus \{ \grad_\theta p; p\in H_\theta^1(Y)\bot e^{\i\langle \theta,\cdot\rangle_{\C^3}}\}.
\]
\end{theorem}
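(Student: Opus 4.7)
The strategy mirrors the template of Section \ref{s:example}: apply the Gelfand transform to cast the problem in fibre-integral form, identify the resulting fibre operator as an instance of the abstract family from Section \ref{s:gen}, and verify Hypothesis \ref{hyp:grtheta} so as to apply Theorem \ref{t:mtgrtheta} with the choice $N(\theta) \coloneqq n(\theta) \oplus n(\theta)$. Combining Proposition \ref{p:cutim} with the Maxwell analogue of Proposition \ref{t:guder}\ref{en:guder3} (which says that $\mathcal{U}_\eta$ conjugates multiplication by $\epsilon(\cdot/\eta), \mu(\cdot/\eta)$ into fibrewise multiplication by $\epsilon, \mu$, and which follows at once from the periodicity of the coefficients), the left-hand resolvent is unitarily equivalent to
\[
\mathcal{U}_\eta^{-1} \int_{[-\pi,\pi)^3}^\oplus \bigl( M(\theta) + \tfrac{1}{\eta} A(\theta) \bigr)^{-1} d\theta \, \mathcal{U}_\eta,
\]
with $H_\theta = [L^2(Y)]^3 \oplus [L^2(Y)]^3$, $M(\theta) = \mathrm{diag}(\epsilon,\mu)$ and $A(\theta) = \begin{pmatrix} 0 & -\curl_\theta \\ \curl_\theta & 0 \end{pmatrix}$.

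The main step is then the verification of Hypothesis \ref{hyp:grtheta}. Skew-self-adjointness of $A(\theta)$ follows from the off-diagonal block structure together with self-adjointness of $\curl_\theta$, and the real-part lower bound on $M(\theta)$ is inherited from $\epsilon, \mu \in \mathcal{M}_{1,3}^\#$. For Hypothesis \ref{hyp:gr}, I plan to use the Fourier basis $\{ e^{\i \langle \theta + 2\pi k, \cdot \rangle_{\C^3}} \}_{k \in \Z^3}$ of $L^2(Y)$: writing $u = \sum_k c_k e^{\i\langle \theta + 2\pi k, \cdot\rangle}$, one computes $\curl_\theta u = \sum_k \i (\theta + 2\pi k) \times c_k \, e^{\i\langle \theta + 2\pi k, \cdot\rangle}$. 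In these coordinates $n(\theta)$ is characterised by $c_0 \in \C^3$ arbitrary and $c_k \parallel (\theta + 2\pi k)$ for every $k \neq 0$, while $r(\theta) \coloneqq n(\theta)^\perp$ is characterised by $c_0 = 0$ and $c_k \perp (\theta + 2\pi k)$ for $k \neq 0$. Both subspaces are manifestly $\curl_\theta$-invariant, which delivers $\pi_{R(\theta)} A(\theta) \subseteq A(\theta) \pi_{R(\theta)}$. On $n(\theta)$ the gradient part is mapped to $0$ and $e^{\i\langle\theta,\cdot\rangle}c \mapsto \i e^{\i\langle\theta,\cdot\rangle}(\theta \times c)$, verifying boundedness of $A(\theta)\pi_{N(\theta)}$. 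On $r(\theta)$, the identity $|(\theta + 2\pi k) \times c_k| = |\theta + 2\pi k|\,|c_k|$ (from perpendicularity) combined with the elementary fact $|\theta + 2\pi k| \geq \pi$ for $k \in \Z^3 \setminus \{0\}$ and $\theta \in [-\pi,\pi)^3$ yields $\Vert \curl_\theta u \Vert \geq \pi \Vert u \Vert$ on $r(\theta)$, and hence the uniform bound $C_R \leq 1/\pi$. Closedness of $n(\theta)$ and $r(\theta)$ follows as in Proposition \ref{p:clo} via Lemma \ref{t:closedrange}, and the weak measurability of $\theta \mapsto \iota_\theta (M(\theta) + \tfrac{1}{\eta} A(\theta))^{-1} \iota_\theta^*$ can be established in the spirit of Theorem \ref{t:wm}, using the analogue of Proposition \ref{p:athetacont} for $\pi_{n(\theta)}$ and $\curl_\theta$.

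Once Hypothesis \ref{hyp:grtheta} is verified, Theorem \ref{t:mtgrtheta} yields an $O(\eta)$ estimate between the fibre-integral resolvent and its projected counterpart, and a block-diagonal inspection of $\pi_{N(\theta)} M(\theta) \pi_{N(\theta)} = \mathrm{diag}(\pi_{n(\theta)}\epsilon \pi_{n(\theta)},\, \pi_{n(\theta)}\mu\pi_{n(\theta)})$ recovers precisely the form appearing in the statement. The main obstacle I anticipate is the conceptual choice of $N(\theta)$: note that $n(\theta) \supsetneq \ker \curl_\theta$ for $\theta \neq 0$ (the kernel contributes only the $1$-dimensional $\mathbb{C}\, \theta\, e^{\i\langle\theta,\cdot\rangle}$ in the finite-dimensional sector, not the whole $e^{\i\langle\theta,\cdot\rangle}\C^3$), so the naive kernel-based splitting fails to be uniform in $\theta$. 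The enlarged subspace $n(\theta)$ is precisely what restores uniformity and produces the bound $C_R \leq 1/\pi$; once this structural choice is in place, everything else is a routine adaptation of Section \ref{s:example}.
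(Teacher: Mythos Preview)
Your proposal is correct and follows essentially the same route as the paper. The paper packages the Fourier-coefficient computations you describe into auxiliary results (the unitary $\mathcal{V}_\theta$ of Definition \ref{def.vtheta}, Lemma \ref{l:foser} for the symbol of $\curl_\theta$, Lemma \ref{l:co} for the characterisation of $\textnormal{r}(\theta)$, and Propositions \ref{p:curln}, \ref{p:poin}, \ref{p:wmmax1}, \ref{p:wmmax2} for the remaining hypotheses), but the content---invariance of $\textnormal{n}(\theta)$ and $\textnormal{r}(\theta)$ under $\curl_\theta$, boundedness of $\curl_\theta$ on $\textnormal{n}(\theta)$ via $\i\theta\times\cdot$, and the Poincar\'e-type bound $\|\curl_\theta u\|\geq\pi\|u\|$ on $\textnormal{r}(\theta)$ from $|(\theta+2\pi k)\times c_k|=|\theta+2\pi k|\,|c_k|$---is exactly what you outline.
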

Unlike in the case of second-order elliptic systems with rapidly oscillating coefficients presented in Section \ref{s:example}, in general the object $\pi_{n(\theta)} a \pi_{n(\theta)}$, $a \in \mathcal{M}^{\#}_{1,3}$, cannot be expressed as the fibre-homogenised matrix given in Section \ref{sec:ahom}. Such a comparison in the Maxwell setting only occurs for a particular choice of right-hand side. Namely, the following result holds.
\begin{theorem}\label{t:ehom} Let $\eta>0$, $\epsilon,\mu\in\mathcal{M}^{\#}_{1,3}$, $F\in \kar(\dive)$. Then
  \begin{multline*}\mathcal{U}_\eta^{-1}\int_{[-\pi,\pi)^3}^\oplus  \left(\begin{pmatrix}
      \pi_{\textnormal{n}(\theta)} \epsilon \pi_{\textnormal{n}(\theta)} & 0 \\ 0 & \pi_{\textnormal{n}(\theta)}\mu \pi_{\textnormal{n}(\theta)}
    \end{pmatrix} + \frac{1}{\eta}\begin{pmatrix} 0 & -\curl_\theta \\  \curl_\theta & 0 \end{pmatrix}\right)^{-1}d\theta \mathcal{U}_\eta \begin{pmatrix} F \\ 0
    \end{pmatrix} \\
    =\mathcal{U}_\eta^{-1}\int_{[-\pi,\pi)^3}^\oplus  \left(\begin{pmatrix}
       \epsilon^{\textnormal{hom}}(\theta)\pi_{\textnormal{n}(\theta)}  & 0 \\ 0 & \mu^{\textnormal{hom}}(\theta)\pi_{\textnormal{n}(\theta)}
    \end{pmatrix} + \frac{1}{\eta}\begin{pmatrix} 0 & -\curl_\theta \\  \curl_\theta & 0 \end{pmatrix}\right)^{-1}d\theta \mathcal{U}_\eta \begin{pmatrix} F \\ 0
    \end{pmatrix}.
    \end{multline*}
\end{theorem}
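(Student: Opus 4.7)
The plan is to first apply the Gelfand transform to reduce the claim to a fibrewise identity over $\theta\in[-\pi,\pi)^3$, and then exploit the constraint imposed by $F\in\kar(\dive)$ on each fibre together with the cell-problem identity from Section \ref{s:example} to equate the two resolvent actions. Using Proposition \ref{p:cutim} and the intertwining $\mathcal{U}_\eta\dive=\tfrac{1}{\eta}\dive_\theta\mathcal{U}_\eta$ (analogous to Proposition \ref{t:guder}\ref{en:guder2}), the hypothesis $F\in\kar(\dive)$ translates to $\hat F_\theta\coloneqq(\mathcal{U}_\eta F)(\theta,\cdot)\in\kar(\dive_\theta)$ for almost every $\theta$. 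Since both sides of the claim are direct fibre integrals whose integrands only involve the $\theta$-fibred operators, it suffices to prove the pointwise identity $\mathcal{L}_1(\theta)^{-1}(\hat F_\theta,0)=\mathcal{L}_2(\theta)^{-1}(\hat F_\theta,0)$ for a.e.\ $\theta$, where $\mathcal{L}_j(\theta)$ denotes the respective fibred operator appearing in integral $j$.

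Setting $(E,H)\coloneqq\mathcal{L}_1(\theta)^{-1}(\hat F_\theta,0)$, the desired fibrewise identity is equivalent to $\mathcal{L}_2(\theta)(E,H)=(\hat F_\theta,0)$, which, after subtracting from the defining equations for $(E,H)$, reduces to the two pointwise identities
\[
\pi_{\textnormal{n}(\theta)}\epsilon\pi_{\textnormal{n}(\theta)}E=\epsilon^{\textnormal{hom}}(\theta)\pi_{\textnormal{n}(\theta)}E \quad \text{and} \quad \pi_{\textnormal{n}(\theta)}\mu\pi_{\textnormal{n}(\theta)}H=\mu^{\textnormal{hom}}(\theta)\pi_{\textnormal{n}(\theta)}H.
\]

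To establish the first of these, I would apply $\dive_\theta$ to the Ampère-type equation $\pi_{\textnormal{n}(\theta)}\epsilon\pi_{\textnormal{n}(\theta)}E-\tfrac{1}{\eta}\curl_\theta H=\hat F_\theta$ and invoke $\dive_\theta\curl_\theta=0$ together with $\hat F_\theta\in\kar(\dive_\theta)$ to conclude that $\pi_{\textnormal{n}(\theta)}\epsilon\pi_{\textnormal{n}(\theta)}E\in\textnormal{n}(\theta)\cap\kar(\dive_\theta)$. A Fourier-mode computation in the quasi-periodic basis $\{e^{\i\langle\theta+2\pi z,\cdot\rangle_{\C^3}}\}_{z\in\Z^3}$ identifies this intersection as $e^{\i\langle\theta,\cdot\rangle_{\C^3}}\theta^\perp$ for $\theta\neq 0$ (and as $\C^3$ for $\theta=0$), so $\pi_{\textnormal{n}(\theta)}\epsilon\pi_{\textnormal{n}(\theta)}E=e^{\i\langle\theta,\cdot\rangle_{\C^3}}\alpha$ for some $\alpha\perp\theta$. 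Inverting $\iota_\theta^*\epsilon\iota_\theta$ on $\textnormal{n}(\theta)$ via Proposition \ref{prop:ahom}\ref{ahom2.5} combined with the cell problem (Proposition \ref{p:ahomwd}) characterises $\pi_{\textnormal{n}(\theta)}E$ as the corrected zero mode $e^{\i\langle\theta,\cdot\rangle_{\C^3}}X+\grad_\theta N_{\theta X}$ with $X=(\epsilon^{\textnormal{hom}}(\theta))^{-1}\alpha$; the analogous analysis applied to the Faraday-type equation characterises $\pi_{\textnormal{n}(\theta)}H$.

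The main obstacle is the final verification that $\epsilon^{\textnormal{hom}}(\theta)\pi_{\textnormal{n}(\theta)}E$ coincides with $\pi_{\textnormal{n}(\theta)}\epsilon\pi_{\textnormal{n}(\theta)}E$ when evaluated on $(E,H)$ of the explicit form identified above. This requires a careful treatment of the action of the constant matrix $\epsilon^{\textnormal{hom}}(\theta)$ on the gradient-correction term $\grad_\theta N_{\theta X}$ in the decomposition of $\pi_{\textnormal{n}(\theta)}E$, exploiting the cell-problem variational identity \eqref{cellprob} so that the orthogonality built into the corrector absorbs the relevant discrepancy; this is the Maxwell analogue of the identity \eqref{eq:homform} established in Section \ref{s:example}. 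The same argument applies \emph{mutatis mutandis} to the $\mu$-identity, completing the proof.
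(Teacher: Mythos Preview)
Your approach is essentially the same as the paper's. The paper also reduces to the fibrewise identity (Proposition~\ref{p:ehom}), and your ``main obstacle'' is precisely what the paper isolates as Proposition~\ref{l:ep}: for $E\in\textnormal{n}(\theta)$ and $f\in\textnormal{n}_1(\theta)\oplus\textnormal{r}(\theta)$ one has $\pi_{\textnormal{n}(\theta)}\epsilon E=f\iff\epsilon^{\textnormal{hom}}(\theta)E=f$, applied with $\tilde E=\pi_{\textnormal{n}(\theta)}E$ and $\tilde f=\hat F_\theta+\tfrac{1}{\eta}\curl_\theta H$ after observing $\rge(\curl_\theta)\subseteq\textnormal{n}_1(\theta)\oplus\textnormal{r}(\theta)$ (which is equivalent to, but slightly cleaner than, your step of applying $\dive_\theta$).
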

In particular, from these two results, and the fact that $\widetilde{M}(\theta) = \begin{pmatrix}
\epsilon^{\rm hom}(\theta) & 0 \\ 0 & \mu^{\rm hom}(\theta)
\end{pmatrix}$ satisfies the assumptions of Theorem \ref{t:hom2}, we deduce the following result.
\begin{corollary}
Assume $\epsilon,\mu\in\mathcal{M}_{1,3}^\#$ and $F\in \kar(\dive)$. Then, there exists $\kappa>0$ such that for all $\eta>0$ we have
\begin{multline*}
\left\| \left(\begin{pmatrix}
\epsilon\big( \tfrac{\cdot}{\eta} \big) & 0 \\ 0 & \mu\big( \tfrac{\cdot}{\eta} \big)
\end{pmatrix} + \begin{pmatrix} 0 & -\curl \\ \curl & 0 \end{pmatrix}\right)^{-1} \begin{pmatrix} F \\ 0
\end{pmatrix} \right.- 
\\  \mathcal{U}_\eta^{-1}\int_{[-\pi,\pi)^3}^\oplus \left. \left(\begin{pmatrix}
\epsilon^{\rm hom}(\theta) & 0 \\ 0 & \mu^{\rm hom}(\theta)
\end{pmatrix} + \frac{1}{\eta}\begin{pmatrix} 0 & -\curl_\theta \\  \curl_\theta & 0 \end{pmatrix}\right)^{-1}d\theta \mathcal{U}_\eta \begin{pmatrix} F \\ 0
\end{pmatrix}\right\|
 \leq \kappa \eta \Vert F \Vert.
\end{multline*}
\end{corollary}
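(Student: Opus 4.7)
My plan is to decompose the target difference via the triangle inequality using two intermediate projected fibre integrals: with $M = \epsilon \oplus \mu$ and $\widetilde{M}(\theta) = \epsilon^{\textnormal{hom}}(\theta) \oplus \mu^{\textnormal{hom}}(\theta)$, I would compare both the original and the homogenised Maxwell resolvents with their respective $\int^{\oplus}(\pi_{\textnormal{n}(\theta)}(\cdot)\pi_{\textnormal{n}(\theta)} + \tfrac{1}{\eta}A)^{-1}\,d\theta$ fibre integrals via Theorem \ref{t:maxhom}, and then use Theorem \ref{t:ehom} twice to show that these two projected fibre integrals coincide on the input $\mathcal{U}_\eta(F, 0)^T$.

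The first application of Theorem \ref{t:maxhom} to $\mathcal{B}_\eta$ gives
\[
\Bigl\Vert \mathcal{B}_\eta^{-1}(F, 0)^T - \mathcal{U}_\eta^{-1}\int_{[-\pi,\pi)^3}^{\oplus} \bigl(\pi_{\textnormal{n}(\theta)} M \pi_{\textnormal{n}(\theta)} + \tfrac{1}{\eta}A(\theta)\bigr)^{-1} d\theta\, \mathcal{U}_\eta (F, 0)^T \Bigr\Vert \le \kappa_1 \eta \Vert F \Vert.
\]
The second application of Theorem \ref{t:maxhom} to $\widetilde{\mathcal{B}}_\eta$ gives the analogous bound with $\widetilde{M}$ and $\kappa_2$ in place of $M$ and $\kappa_1$; the needed hypothesis checks are short, since Proposition \ref{prop:ahom} parts \ref{ahom3} and \ref{ahom4.5} supply uniform coercivity and boundedness of $\widetilde{M}(\theta)$, $A(\theta)$ is unchanged, and weak measurability of the tilde resolvent family follows mutatis mutandis from Theorem \ref{t:wm}.

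For the crucial middle piece I would apply Theorem \ref{t:ehom} to each Maxwell family separately. Its application to $\mathcal{B}_\eta$, invoking $F \in \kar(\dive)$, rewrites the first projected fibre integrand on $\mathcal{U}_\eta(F, 0)^T$ as $(\epsilon^{\textnormal{hom}}(\theta)\pi_{\textnormal{n}(\theta)} \oplus \mu^{\textnormal{hom}}(\theta)\pi_{\textnormal{n}(\theta)} + \tfrac{1}{\eta}A(\theta))^{-1}$. Its application to $\widetilde{\mathcal{B}}_\eta$ yields an analogous rewriting, but since the $\widetilde{M}(\theta)$ are constant in $y$, Proposition \ref{prop:ahom}\ref{ahom5} forces $(\epsilon^{\textnormal{hom}})^{\textnormal{hom}}(\theta) = \epsilon^{\textnormal{hom}}(\theta)$ and similarly for $\mu$, so both rewritings produce the \emph{same} fibre integrand acting on the \emph{same} vector. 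The projected fibre integrals therefore coincide exactly on $\mathcal{U}_\eta(F, 0)^T$, so the middle piece contributes nothing; assembling the three pieces via the triangle inequality delivers the bound with $\kappa = \kappa_1 + \kappa_2$.

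The main obstacle is justifying the second application of Theorem \ref{t:ehom}: it is stated for coefficients in $\mathcal{M}^\#_{1,3}$, i.e., genuinely $y$-periodic matrices, whereas the tilde coefficients depend on $\theta$ but are constant in $y$. Inspecting the proof of Theorem \ref{t:ehom} should reveal that the argument is fibrewise and insensitive to extra $\theta$-dependence of the coefficient, so it applies to each constant matrix $\epsilon^{\textnormal{hom}}(\theta) \in \mathcal{M}^\#_{1,3}$ separately; combined with Proposition \ref{prop:ahom}\ref{ahom5}, this closes the gap, and recovers the claim of the preamble that $\widetilde{M}$ effectively satisfies the assumptions of Theorem \ref{t:hom2} on the divergence-free sector relevant here.
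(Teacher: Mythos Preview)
Your proposal is correct and follows essentially the same route as the paper. The paper's preamble compresses the argument into ``from Theorems \ref{t:maxhom} and \ref{t:ehom}, and the fact that $\widetilde{M}(\theta)$ satisfies the assumptions of Theorem \ref{t:hom2}''; unpacking this is precisely your scheme of applying Theorem \ref{t:maxhom} (equivalently Theorem \ref{t:mtgr}) once to $M$ and once to $\widetilde{M}$, and bridging the two projected fibre integrals via Theorem \ref{t:ehom} together with Proposition \ref{prop:ahom}\ref{ahom5}. You also correctly flag the only real subtlety---that the second application is fibrewise to the $\theta$-dependent constant coefficients $\epsilon^{\textnormal{hom}}(\theta),\mu^{\textnormal{hom}}(\theta)$---and resolve it in the right way by observing that the underlying arguments (Proposition \ref{p:ehom} and Theorem \ref{t:mtgrtheta}) are pointwise in $\theta$.
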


\begin{remark}
In fact, Theorem \ref{t:ehom} holds if $F \in [L^2(\mathbb{R}^3)]^3$ is such that, for each $\theta \in \Theta$, $\mathcal{U}_\eta F(\theta, \cdot)$ is an element of $\{ \grad_\theta p; p\in H_\theta^1(Y)\bot e^{\i\langle \theta,\cdot\rangle_{\C^3}}\}^\perp$. This is a consequence of Lemma \ref{l:ep} below. It is clear that $\ker(\dive)$ is a strict subset of such fields. 
\end{remark}
The next few paragraphs focus on a proof of Theorem \ref{t:maxhom}.  For this we will be applying Theorem \ref{t:mtgrtheta}, to the following setting
\begin{equation}\label{set:max}
\begin{split}    &\hspace{0\textwidth}H= H_\theta  = [L^2(Y)]^6,  \quad   N(\theta)  = [\textnormal{n}(\theta)]^2 = [ e^{\i \langle\theta,\cdot\rangle_{\C^3}}\mathbb{C}^3 \oplus \{ \grad_\theta p; p\in H_\theta^1(Y)\bot e^{\i\langle \theta,\cdot\rangle_{\C^3}}\}]^2, \\
 &\hspace{1.5cm} \begin{aligned}
 A(\theta) & = \begin{pmatrix}
                      0 & -\curl_\theta
                      \\ \curl_\theta & 0
                     \end{pmatrix}, & M(\theta) & = \begin{pmatrix}
                      \epsilon & 0
                      \\ 0 & \mu
                     \end{pmatrix}, &  \Theta &\coloneqq [-\pi,\pi)^3.
 \end{aligned} \end{split}
\end{equation}
Before we prove that Hypothesis \ref{hyp:grtheta} holds in this setting, let us study more closely the operator $\curl_\theta$. For this we introduce the following transformation.
\begin{definition}\label{def.vtheta} Let $\theta\in \Theta$. We define
\[
   \mathcal{V}_\theta \colon L^2(Y)\to \ell^2(\mathbb{Z}^3), u\mapsto \left(\langle u,e^{\i \langle \theta+2\pi z,\cdot\rangle_{\C^3}}\rangle\right)_{z\in \mathbb{Z}^3}.
\]
\end{definition}
Note that $\mathcal{V}_\theta$ is unitary with
\[
   \mathcal{V}_\theta^{-1} \big(c^{(z)}\big)_{z\in\mathbb{Z}^3} = \sum_{z\in\mathbb{Z}^3} c^{(z)} e^{\i \langle \theta+2\pi z,\cdot\rangle_{\C^3}},
\]
and the sum being convergent in $L^2(Y)$. 

In the following, we will employ the slight abuse of notation and reuse  $\mathcal{V}_\theta$ to denote the corresponding unitary operator from $[L^2(Y)]^3$ to $[\ell^2(\mathbb{Z}^3)]^3$, which acts component-wise as in the previous definition.
\begin{lemma}\label{l:foser} Let $\theta\in \Theta$. Then 
\[
\mathcal{V}_\theta    \curl_\theta \mathcal{V}_\theta^{-1} \big(c^{(z)}\big)_z = \left(\i(\theta+2\pi z)\times c^{(z)}\right)_z .
\]
In particular, $\curl_\theta=\curl_\theta^*$.
\end{lemma}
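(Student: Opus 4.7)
\medskip

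\noindent\textbf{Proof plan for Lemma \ref{l:foser}.} The plan is to first verify the claimed formula on the trigonometric polynomials, which form a core of $\curl_\theta$, and then to extend by closure. Concretely, for any finitely supported $(c^{(z)})_{z\in\mathbb{Z}^3}\subseteq \mathbb{C}^3$ set $u\coloneqq \sum_{z}c^{(z)}e^{\i\langle \theta+2\pi z,\cdot\rangle_{\C^3}}\in C_\theta^\infty(Y)^3$. Since the sum is finite, differentiation can be carried out termwise and the elementary identity $\curl\big(c\, e^{\i\langle \xi,\cdot\rangle_{\C^3}}\big)=\i\xi\times c\, e^{\i\langle \xi,\cdot\rangle_{\C^3}}$ (for any constant $c\in\mathbb{C}^3$ and $\xi\in\R^3$) yields
\[
 \curl u=\sum_{z\in \mathbb{Z}^3}\i(\theta+2\pi z)\times c^{(z)}\, e^{\i\langle \theta+2\pi z,\cdot\rangle_{\C^3}},
\]
so that $\mathcal{V}_\theta\curl \mathcal{V}_\theta^{-1}(c^{(z)})_z=(\i(\theta+2\pi z)\times c^{(z)})_z$ on this dense set.

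Next, I would identify the closure. The operator $T\colon (c^{(z)})_z\mapsto (\i(\theta+2\pi z)\times c^{(z)})_z$ is a multiplication operator in $\ell^2(\mathbb{Z}^3;\mathbb{C}^3)$ with fibre matrices $M_z\coloneqq \i[(\theta+2\pi z)]_\times$, where $[\cdot]_\times$ denotes the real skew-symmetric cross-product matrix. Such multiplication operators with measurable fibres are known to be closed on their maximal domain $\dom(T)=\{(c^{(z)})_z;\sum_z \|M_z c^{(z)}\|^2<\infty\}$. Conjugating back through $\mathcal{V}_\theta$ gives a closed extension of $\curl|_{[C_\theta^\infty(Y)]^3}$, and since the trigonometric polynomials form a core of this restriction (by the definition of $H_\theta^1(Y)$ as the closure of $C_\theta^\infty(Y)$, along the same lines as the gradient argument in Proposition \ref{t:guder}), I would conclude $\mathcal{V}_\theta\curl_\theta\mathcal{V}_\theta^{-1}=T$.

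Finally, for the self-adjointness, I would observe that each fibre matrix $M_z=\i[(\theta+2\pi z)]_\times$ is Hermitian: indeed $[(\theta+2\pi z)]_\times$ is a real skew-symmetric matrix, so $M_z^*=-\i[(\theta+2\pi z)]_\times^{T}=\i[(\theta+2\pi z)]_\times=M_z$. Hence $T=T^*$ in $\ell^2(\mathbb{Z}^3;\mathbb{C}^3)$, and since $\mathcal{V}_\theta$ is unitary, $\curl_\theta=\mathcal{V}_\theta^{-1}T\mathcal{V}_\theta$ is self-adjoint.

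The only nontrivial point is the identification of the closure in the second step. It is not an obstacle as such, but it is where one has to be careful: one must ensure that the explicit multiplication operator $T$ obtained in the first step really equals the abstract closure $\overline{\curl|_{[H^1_\theta(Y)]^3}}$ and not merely a closed extension. This is handled by checking that $\mathcal{V}_\theta$ maps $[H^1_\theta(Y)]^3$ onto the weighted $\ell^2$-space $\{(c^{(z)})_z;\sum_z(1+|\theta+2\pi z|^2)\|c^{(z)}\|^2<\infty\}$, which contains $\dom(T)$ as a closed subspace under $\|\cdot\|_T$, and that finitely supported sequences are dense in $\dom(T)$.
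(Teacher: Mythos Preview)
Your approach is essentially the same as the paper's: the paper simply states that the unitary equivalence ``follows by direct computation'' and that self-adjointness follows because the multiplication operator $(c^{(z)})_z\mapsto ((\theta+2\pi z)\times c^{(z)})_z$ is skew-selfadjoint, which is exactly your fibre-matrix argument with the factor $\i$ made explicit. One small slip in your final paragraph: the weighted $\ell^2$-space corresponding to $[H^1_\theta(Y)]^3$ is \emph{contained in} $\dom(T)$, not the other way around (the cross product annihilates the component parallel to $\theta+2\pi z$); but your actual closure argument---density of finitely supported sequences in $\dom(T)$ with respect to the graph norm---is correct and is all that is needed.
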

\begin{proof}
 The unitary equivalence follows by direct computation. The fact $\curl_\theta$ is selfadjoint now  follows from the fact that  the multiplication operator \[\big(c^{(z)}\big)_z \mapsto \left((\theta+2\pi z)\times c^{(z)}\right)_z\] is skew-selfadjoint.
\end{proof}
We now gather several relevant auxiliary results. For  $\theta\in \Theta$, recall 
$$
\textnormal{n}(\theta) = e^{\i \langle\theta,\cdot\rangle_{\C^3}}\mathbb{C}^3 \oplus \{ \grad_\theta p; p\in H_\theta^1(Y)\bot e^{\i\langle \theta,\cdot\rangle_{\C^3}}\},
$$ and set 
\begin{equation*}
   \textnormal{n}_1(\theta)\coloneqq e^{\i \langle\theta,\cdot\rangle_{\C^3}}\mathbb{C}^3,
  \quad \textnormal{r}(\theta)\coloneqq \textnormal{n}(\theta)^{\bot_{[L^2(Y)]^3}}.
\end{equation*}
As usual, let  $\iota_{\textnormal{n}(\theta)}$, $\iota_{\textnormal{r}(\theta)}$ be the canonical embeddings and  $\pi_{\textnormal{n}(\theta)}$, $\pi_{\textnormal{r}(\theta)}$ the orthogonal projections.
\begin{lemma}\label{l:co} Let $\theta\in \Theta$, $u=\sum_{z\in\mathbb{Z}^3} c^{(z)}e^{\i\langle \theta+2\pi z,\cdot\rangle_{\C^3}} \in [L^2(Y)]^3$. Then
\[
   u\in \textnormal{r}(\theta) \iff c^{(0)}=0\quad \& \quad \forall z\in \mathbb{Z}^3\colon c^{(z)} \bot (\theta+2\pi z). 
\] 
\end{lemma}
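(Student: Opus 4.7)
My plan is to exploit the fact that $\{e^{\i\langle \theta+2\pi z,\cdot\rangle_{\C^3}}\}_{z\in\Z^3}$ is an orthonormal basis of $L^2(Y)$ (already used in the proof of Proposition \ref{p:hypasse3}\ref{en:h31} and Proposition \ref{p:athetacont}), and then just unpack what orthogonality to the two summands defining $\textnormal{n}(\theta)$ means in terms of the Fourier coefficients $c^{(z)}\in\C^3$.

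Step one: $u\perp \textnormal{n}_1(\theta)=e^{\i\langle\theta,\cdot\rangle_{\C^3}}\C^3$ iff $c^{(0)}=0$. This is immediate from the computation $\langle u, e^{\i\langle\theta,\cdot\rangle_{\C^3}}\alpha\rangle_{[L^2(Y)]^3}=c^{(0)}\cdot\overline{\alpha}$ for $\alpha\in\C^3$, valid by Parseval (the other Fourier modes drop out by orthonormality).

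Step two: assuming $c^{(0)}=0$, I would test orthogonality against gradients. Any scalar $p\in H^1_\theta(Y)\perp e^{\i\langle\theta,\cdot\rangle_{\C^3}}$ expands as $p=\sum_{z\neq 0}p^{(z)}e^{\i\langle\theta+2\pi z,\cdot\rangle_{\C^3}}$ with $\sum_{z\neq 0}|p^{(z)}|^2(1+|\theta+2\pi z|^2)<\infty$, and termwise differentiation gives
\[
\grad_\theta p=\sum_{z\neq 0}\i(\theta+2\pi z)\,p^{(z)}\,e^{\i\langle\theta+2\pi z,\cdot\rangle_{\C^3}}.
\]
Using Parseval again,
\[
\langle u,\grad_\theta p\rangle_{[L^2(Y)]^3}=-\i\sum_{z\neq 0}\overline{p^{(z)}}\,\bigl(c^{(z)}\cdot(\theta+2\pi z)\bigr).
\]
Requiring this to vanish for all admissible $p$ is equivalent to requiring $c^{(z)}\cdot(\theta+2\pi z)=0$ for every $z\neq 0$: sufficiency is Cauchy--Schwarz, necessity comes from choosing $p^{(z)}$ supported on a single mode (such finitely supported sequences lie in $H^1_\theta(Y)\perp e^{\i\langle\theta,\cdot\rangle_{\C^3}}$). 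Combining with $c^{(0)}=0$, which trivially implies $c^{(0)}\perp\theta$, yields the stated characterization.

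I do not anticipate a real obstacle here; the only mild subtlety is justifying that the pairing between $u$ and $\grad_\theta p$ can be evaluated modewise, but this is just Parseval applied to $\grad_\theta$ realised as the $\ell^2$-multiplier $(p^{(z)})_z\mapsto(\i(\theta+2\pi z)p^{(z)})_z$ via the unitary $\mathcal{V}_\theta$ of Definition \ref{def.vtheta}, exactly as in Lemma \ref{l:foser}. Everything else is bookkeeping with the orthonormal basis, so the proof should be short.
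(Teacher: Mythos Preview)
Your proposal is correct and follows essentially the same route as the paper's proof: both arguments expand $u$ and $p$ in the orthonormal basis $\{e^{\i\langle\theta+2\pi z,\cdot\rangle_{\C^3}}\}_{z\in\Z^3}$, identify $c^{(0)}=0$ as orthogonality to $\textnormal{n}_1(\theta)$, and then test against gradients $\grad_\theta p$ with $p$ supported on a single Fourier mode to extract the condition $c^{(z)}\perp(\theta+2\pi z)$. The only cosmetic difference is that you package both implications as a single equivalence via Parseval, whereas the paper treats the two directions separately.
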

\begin{proof}
 Let $u\in \textnormal{r}(\theta)$. Then, clearly $c^{(0)}=0$. Moreover, we compute for $z\in \mathbb{Z}^3\setminus\{0\}$:
 \begin{align*}
   \langle c^{(z)},-\i(\theta+2\pi z)\rangle & = \sum_{j=1}^3 \langle c_j^{(z)},-\i(\theta+2\pi z)_j\rangle
   \\ & = \sum_{j=1}^3 \langle \langle u_j,e^{\i\langle \theta+2\pi z,\cdot\rangle_{\C^3}}\rangle,-\i(\theta+2\pi z)_j\rangle
   \\ & = \langle u,\i(\theta+2\pi z)e^{\i\langle \theta+2\pi z,\cdot\rangle_{\C^3}}\rangle
   \\ & = \langle u,\grad_\theta e^{\i\langle \theta+2\pi z,\cdot\rangle_{\C^3}}\rangle
   \\ & = 0.
 \end{align*}
On the other hand, assume that the Fourier coefficients satisfy the properties mentioned on the right-hand side of the claimed equivalence. Then $c^{(0)}=0$ implies $\langle u,e^{\i\langle \theta,\cdot\rangle_{\C^3}}\gamma \rangle=0$ for all $\gamma \in \mathbb{C}^3$. Next, let $p\in H_\theta^1(Y)\bot e^{\i\langle\theta,\cdot\rangle_{\C^3}}\mathbb{C}^3$. From the identity $p=\sum_{z\in\mathbb{Z}^3\setminus\{0\}} c^{(z)}_pe^{\i\langle \theta+2\pi z,\cdot\rangle_{\C^3}}\in H_\theta^1(Y)\bot e^{\i\langle\theta,\cdot\rangle_{\C^3}}$ we deduce that
\begin{align*}
   \langle u,\grad_\theta p\rangle_{[L^2(Y)]^3} & = \langle u, \grad_\theta \sum_{z\in\mathbb{Z}^3\setminus\{0\}} c_p^{(z)}e^{\i\langle \theta+2\pi z,\cdot\rangle_{\C^3}}\rangle
   \\ &  = \langle u, \sum_{z\in\mathbb{Z}^3\setminus\{0\}} \i(\theta+2\pi z)c_p^{(z)}e^{\i\langle \theta+2\pi z,\cdot\rangle_{\C^3}}\rangle
   \\ & = \sum_{z\in\mathbb{Z}^3\setminus\{0\}} \langle c^{(z)},\i(\theta+2\pi z)\rangle \overline{c_p^{(z)}}
   \\ & = 0,
\end{align*}
which establishes the claim.
\end{proof}

\begin{proposition}\label{p:curln} Let $\theta\in \Theta$. Then 
\[
   \curl_\theta \pi_{\textnormal{n}(\theta)}=\i\theta\times \pi_{\textnormal{n}_1(\theta)}.
\] Moreover, we have
\[
   \pi_{\textnormal{n}(\theta)}\curl_\theta \subseteq\curl_\theta \pi_{\textnormal{n}(\theta)}.
\]
\end{proposition}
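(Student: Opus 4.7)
The plan is to pass to Fourier series via the unitary $\mathcal{V}_\theta$ from Definition \ref{def.vtheta}, where $\curl_\theta$ becomes multiplication by $\i(\theta+2\pi z)\times$ (Lemma \ref{l:foser}) and the subspaces $\textnormal{n}_1(\theta)$, $\textnormal{n}_2(\theta)$, $\textnormal{r}(\theta)$ admit a clean coefficient-wise description. Write $\textnormal{n}(\theta)=\textnormal{n}_1(\theta)\oplus \textnormal{n}_2(\theta)$ with $\textnormal{n}_2(\theta)\coloneqq\{\grad_\theta p; p\in H^1_\theta(Y)\bot e^{\i\langle\theta,\cdot\rangle_{\C^3}}\}$, so that $\pi_{\textnormal{n}(\theta)}=\pi_{\textnormal{n}_1(\theta)}+\pi_{\textnormal{n}_2(\theta)}$. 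In Fourier coefficients $(c^{(z)})_{z\in\mathbb{Z}^3}$, $\textnormal{n}_1(\theta)$ sits at $z=0$, and an element of $\textnormal{n}_2(\theta)$ has $c^{(0)}=0$ and $c^{(z)}=\i(\theta+2\pi z)c_p^{(z)}$ for $z\neq 0$, i.e.\ $c^{(z)}\in\mathbb{C}(\theta+2\pi z)$. By Lemma \ref{l:co}, $\textnormal{r}(\theta)$ is the orthogonal complement: $c^{(0)}=0$ and $c^{(z)}\bot(\theta+2\pi z)$. Consequently $\pi_{\textnormal{n}(\theta)}$ acts fibre-wise on Fourier coefficients as the identity at $z=0$ and as the orthogonal projection onto $\mathbb{C}(\theta+2\pi z)$ for $z\neq 0$.

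For the first identity, it suffices to compute $\curl_\theta \pi_{\textnormal{n}_1(\theta)}$ and $\curl_\theta \pi_{\textnormal{n}_2(\theta)}$ separately. On $\textnormal{n}_1(\theta)=e^{\i\langle\theta,\cdot\rangle_{\C^3}}\mathbb{C}^3$, direct differentiation gives $\curl_\theta(e^{\i\langle\theta,\cdot\rangle_{\C^3}}c)=e^{\i\langle\theta,\cdot\rangle_{\C^3}}(\i\theta\times c)=\i\theta\times (e^{\i\langle\theta,\cdot\rangle_{\C^3}}c)$, hence $\curl_\theta \pi_{\textnormal{n}_1(\theta)}=\i\theta\times \pi_{\textnormal{n}_1(\theta)}$. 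On $\textnormal{n}_2(\theta)$, every element $\grad_\theta p$ has Fourier coefficients parallel to $\theta+2\pi z$, so in Fourier side $\curl_\theta$ acts as $\i(\theta+2\pi z)\times[\alpha_z(\theta+2\pi z)]=0$; in particular $\textnormal{n}_2(\theta)\subseteq\dom(\curl_\theta)$ and $\curl_\theta\pi_{\textnormal{n}_2(\theta)}=0$. Adding the two identities yields $\curl_\theta\pi_{\textnormal{n}(\theta)}=\i\theta\times\pi_{\textnormal{n}_1(\theta)}$.

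For the inclusion $\pi_{\textnormal{n}(\theta)}\curl_\theta\subseteq \curl_\theta\pi_{\textnormal{n}(\theta)}$, take $u\in\dom(\curl_\theta)$ with Fourier coefficients $(c^{(z)})_z$. The coefficients of $\pi_{\textnormal{n}(\theta)} u$ are $c^{(0)}$ at $z=0$ and the scalar multiple of $\theta+2\pi z$ obtained by projection for $z\neq 0$, all bounded in norm by $|c^{(z)}|$; since $(\theta+2\pi z)\times[\alpha_z(\theta+2\pi z)]=0$, the curl side-coefficient is simply $\i\theta\times c^{(0)}$ at $z=0$ and zero elsewhere, which is trivially in $\ell^2$, so $\pi_{\textnormal{n}(\theta)} u\in\dom(\curl_\theta)$. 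On the other hand, $\curl_\theta u$ has Fourier coefficients $\i(\theta+2\pi z)\times c^{(z)}$, each orthogonal to $\theta+2\pi z$; therefore $\pi_{\textnormal{n}(\theta)}\curl_\theta u$ equals $\i\theta\times c^{(0)}$ at $z=0$ and zero elsewhere. Matching the two sides coefficient-wise gives the claimed inclusion (in fact equality on $\dom(\curl_\theta)$).

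The only mild obstacle is bookkeeping at $z=0$ (where the ``direction $\theta+2\pi z$'' degenerates when $\theta=0$) and verifying that $\pi_{\textnormal{n}(\theta)} u$ lands in $\dom(\curl_\theta)$; both are handled transparently by the Fourier representation from Lemma \ref{l:foser} and Lemma \ref{l:co}.
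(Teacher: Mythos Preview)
Your proof is correct and follows essentially the same Fourier-series approach as the paper, invoking Lemma \ref{l:foser} and Lemma \ref{l:co} to read off $\curl_\theta$ and the projections fibre-wise. The only minor difference is in the second assertion: the paper first shows $\curl_\theta\pi_{\textnormal{r}(\theta)}u\in\textnormal{r}(\theta)$ (again via Fourier), deduces $\pi_{\textnormal{r}(\theta)}\curl_\theta\subseteq\curl_\theta\pi_{\textnormal{r}(\theta)}$, and then invokes the abstract Lemma \ref{l:Athetainv} to pass to $\textnormal{n}(\theta)$, whereas you compute both sides for $\pi_{\textnormal{n}(\theta)}$ directly and match coefficients---a slightly more hands-on variant of the same argument.
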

\begin{proof}
Let $u\in [L^2(Y)]^3$.
Then
\[
   u = \pi_{\textnormal{n}(\theta)} u + \pi_{\textnormal{r}(\theta)} u = e^{\i \langle\theta,\cdot\rangle_{\C^3}} \gamma +\grad_\theta p +\pi_{\textnormal{r}(\theta)} u,
\]
for some $\gamma\in\mathbb{C}^3$ and $p\in H^1_\theta(Y)\bot e^{\i\langle\theta,\cdot\rangle_{\C^3}}$. Since $\rge(\grad_\theta)\subseteq \kar(\curl_\theta)$, we obtain that $\pi_{\textnormal{n}(\theta)}u\in \dom(\curl_\theta)$. Moreover, we compute
\[
   \curl_\theta\pi_{\textnormal{n}(\theta)} u =\curl_\theta\left( e^{\i \langle\theta,\cdot\rangle_{\C^3}} \gamma +\grad_\theta p\right)=\i\theta\times e^{\i \langle\theta,\cdot\rangle_{\C^3}} \gamma = \i\theta\times \pi_{\textnormal{n}_1(\theta)}u.
\]
This shows the first desired assertion. 

Next, assume in addition that $u\in \dom(\curl_\theta)$. Then, $\pi_{\textnormal{r}(\theta)}u\in \dom(\curl_\theta)$ and 
\[
   \curl_\theta u = \i\theta\times \pi_{\textnormal{n}_1(\theta)}u+ \curl_\theta\pi_{\textnormal{r}(\theta)} u =\i\theta\times e^{\i \langle\theta,\cdot\rangle_{\C^3}} \gamma +\curl_\theta\pi_{\textnormal{r}(\theta)} u.
\]
From $\i\theta\times e^{\i \langle\theta,\cdot\rangle_{\C^3}} \gamma\in \textnormal{n}_1(\theta)\subseteq \textnormal{n}(\theta)$, we infer that
\[
   \pi_{\textnormal{r}(\theta)}\curl_\theta u
    = \pi_{\textnormal{r}(\theta)}\curl_\theta\pi_{\textnormal{r}(\theta)} u.
\]
Lemma \ref{l:foser} implies that there exists some $(c^{(z)})_{z\in\mathbb{Z}^3}$ in $\mathbb{C}^3$  such that
\[
   \curl_\theta\pi_{\textnormal{r}(\theta)} u = \sum_{z\in \mathbb{Z}^3\setminus\{0\}} \i(\theta+2\pi z)\times c^{(z)}e^{\i\langle\i(\theta+2\pi z),\cdot\rangle_{\C^3}}.
\]
Furthermore, since $\langle (\theta+2\pi z),\i(\theta+2\pi z)\times c^{(z)}\rangle=0$, it follows from Lemma \ref{l:co} that $\curl_\theta\pi_{\textnormal{r}(\theta)} u \in \textnormal{r}(\theta)$. Hence,
\[
 \pi_{\textnormal{r}(\theta)}\curl_\theta u = \pi_{\textnormal{r}(\theta)}\curl_\theta\pi_{\textnormal{r}(\theta)} u=\curl_\theta\pi_{\textnormal{r}(\theta)} u,
\]
which together with Lemma \ref{l:Athetainv}, for $H = [L^2(Y)]^3$, $A = \curl_\theta$ and $U = \textnormal{r}(\theta)$, yields the second desired assertion.
\end{proof}
\begin{proposition}\label{p:poin} Let $\theta\in\Theta$, $u\in \dom(\curl_\theta)\cap\textnormal{r}(\theta)$. Then 
\[
   \|u\|\leq \tfrac{1}{\pi}\|\curl_{\theta} u\|.
\] 
\end{proposition}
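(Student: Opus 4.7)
The plan is to exploit the Fourier representation via the unitary $\mathcal{V}_\theta$ from Definition \ref{def.vtheta} and the structural characterisation of $\textnormal{r}(\theta)$ from Lemma \ref{l:co}. First, I would write $u=\sum_{z\in\mathbb{Z}^3} c^{(z)}e^{\i\langle\theta+2\pi z,\cdot\rangle_{\C^3}}$ with coefficients $(c^{(z)})_z\in [\ell^2(\mathbb{Z}^3)]^3$. Since $u\in\textnormal{r}(\theta)$, Lemma \ref{l:co} gives $c^{(0)}=0$ and $c^{(z)}\bot(\theta+2\pi z)$ in $\C^3$ for every $z$. By Lemma \ref{l:foser}, $\curl_\theta u$ corresponds via $\mathcal{V}_\theta$ to $(\i(\theta+2\pi z)\times c^{(z)})_z$, and Parseval's identity gives $\|\curl_\theta u\|^2=\sum_{z}|(\theta+2\pi z)\times c^{(z)}|^2$.

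The next step is to use the orthogonality $c^{(z)}\bot(\theta+2\pi z)$ to turn the cross-product norm into a product of norms: for $a\bot b$ in $\C^3$ one has $|a\times b|=|a||b|$, so
\[
\|\curl_\theta u\|^2=\sum_{z\in\mathbb{Z}^3\setminus\{0\}}|\theta+2\pi z|^2|c^{(z)}|^2.
\]
This reduces the problem to the lower bound $|\theta+2\pi z|\geq \pi$ for all $z\in\mathbb{Z}^3\setminus\{0\}$ and $\theta\in\Theta=[-\pi,\pi)^3$. For such a $z$ there exists at least one index $j\in\{1,2,3\}$ with $z_j\neq 0$; since $\theta_j\in[-\pi,\pi)$ and $2\pi z_j\in 2\pi\mathbb{Z}\setminus\{0\}$, the nearest point of $2\pi\mathbb{Z}\setminus\{0\}$ to $-\theta_j$ has distance at least $\pi$, hence $|\theta_j+2\pi z_j|\geq\pi$, which yields $|\theta+2\pi z|\geq\pi$.

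Combining these estimates gives
\[
\|\curl_\theta u\|^2\geq\pi^2\sum_{z\in\mathbb{Z}^3\setminus\{0\}}|c^{(z)}|^2=\pi^2\|u\|^2,
\]
where the last equality uses Parseval (via unitarity of $\mathcal{V}_\theta$) together with $c^{(0)}=0$. Taking square roots yields the claim. The only delicate step is the elementary lower bound $|\theta+2\pi z|\geq\pi$, which is a direct consequence of the half-open choice of the Brillouin zone $[-\pi,\pi)^3$; everything else is bookkeeping with the orthonormal basis $\{e^{\i\langle\theta+2\pi z,\cdot\rangle_{\C^3}}\}_{z\in\mathbb{Z}^3}$.
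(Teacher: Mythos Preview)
Your argument is correct and follows essentially the same route as the paper's proof: Fourier expansion via $\mathcal{V}_\theta$, Lemma \ref{l:co} to obtain $c^{(0)}=0$ and $c^{(z)}\bot(\theta+2\pi z)$, the cross-product identity $\|(\theta+2\pi z)\times c^{(z)}\|=\|\theta+2\pi z\|\|c^{(z)}\|$ under orthogonality, and the bound $|\theta+2\pi z|\geq\pi$ for $z\neq 0$. The only difference is cosmetic: you spell out the justification of the inequality $|\theta+2\pi z|\geq\pi$ coordinate-wise, whereas the paper leaves it implicit.
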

\begin{proof}
 There exists $(c^{(z)})_{z\in\mathbb{Z}^3}$ in $\mathbb{C}^3$ with $u=\sum_{z\in\mathbb{Z}^3} c^{(z)} e^{\i\langle \theta+2\pi z,\cdot\rangle_{\C^3}}$.
 By Lemma \ref{l:co}, we have that $c^{(0)}=0$ and $c^{(z)}\bot \theta+2\pi z$ for all $z\in\mathbb{Z}^3$. In particular, we get
 \[
    \|(\theta+2\pi z)\times c^{(z)}\|=\|(\theta+2\pi z)\|\|c^{(z)}\|\geq \pi \|c^{(z)}\|\quad(z\in\mathbb{Z}^3\setminus\{0\}).
 \]
 Thus, by Lemma \ref{l:foser},
 \begin{align*}
  \|\curl_{\theta} u\|^2 &=\Big\|\sum_{z\in\mathbb{Z}^3\setminus\{0\}} \i(\theta+2\pi z)\times c^{(z)} e^{\i\langle \theta+2\pi z,\cdot\rangle_{\C^3}}\Big\|^2
   \\ & = \sum_{z\in\mathbb{Z}^3\setminus\{0\}} \left\|\i(\theta+2\pi z)\times c^{(z)} e^{\i\langle \theta+2\pi z,\cdot\rangle_{\C^3}}\right\|^2
   \\ & = \sum_{z\in\mathbb{Z}^3\setminus\{0\}} \left\|(\theta+2\pi z)\times c^{(z)}\right\|^2
   \\ &\geq \pi^2 \sum_{z\in\mathbb{Z}^3\setminus\{0\}} \left\| c^{(z)}\right\|^2 = \pi^2 \|u\|^2.\qedhere
 \end{align*}
\end{proof}
In the setting  \eqref{set:max}, with the Propositions \ref{p:clo} \ref{clo2}, \ref{p:curln} and \ref{p:poin}, we can readily demonstrate Hypothesis \ref{hyp:grtheta} \ref{en:grt1}-\ref{en:grt3} for the setting \eqref{set:max}. In particular, the assumptions of Theorem \ref{t:mtgr} hold. To argue as in the proof of Theorem \ref{t:quanthom} and obtain a proof for Theorem \ref{t:maxhom}, it remains to prove Hypothesis \ref{en:grt5}.

\begin{proposition}
	\label{p:wmmax1} Let $(\theta_k)_{k\in \mathbb{N}}$ be a convergent sequence in $\Theta$, $\theta\coloneqq \lim_{k\to\infty}\theta_k$. Let $(u_k)_{k\in\mathbb{N}}$ in $\dom(\curl_{\theta_k}) \subseteq [L^2(Y)]^3$ weakly converge to some limit $u$. Assume that $(\curl_{\theta_k} u_k)_{k\in\mathbb{N}}$ is bounded. Then $u\in\dom(\curl_\theta)$ and 
		\[ 
		\curl_{\theta_k}u_k\rightharpoonup \curl_\theta u.
		\]
	
\end{proposition}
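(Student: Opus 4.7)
The approach mirrors the strategy used in the proof of Proposition \ref{p:athetacont}\ref{ath2}, where the analogous statement for $\grad_\theta$ and $\dive_\theta$ is obtained; the role of the first-order symbols $\i(\theta+2\pi z)$ in the gradient/divergence setting is now played by the vector products $\i(\theta+2\pi z)\times(\cdot)$, and the key tool is the Fourier representation provided by Lemma \ref{l:foser}.

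First, I would pass to the Fourier side via the unitary map $\mathcal{V}_{\theta_k}$ of Definition \ref{def.vtheta} and write
\[
  u_k=\sum_{z\in \Z^3} c^{(z)}_k\, e^{\i\langle\theta_k+2\pi z,\cdot\rangle_{\C^3}},\qquad c^{(z)}_k\coloneqq \langle u_k,e^{\i\langle\theta_k+2\pi z,\cdot\rangle_{\C^3}}\rangle,
\]
and analogously $c^{(z)}\coloneqq \langle u,e^{\i\langle\theta+2\pi z,\cdot\rangle_{\C^3}}\rangle$. Since $\theta_k\to\theta$ implies $e^{\i\langle\theta_k+2\pi z,\cdot\rangle_{\C^3}}\to e^{\i\langle\theta+2\pi z,\cdot\rangle_{\C^3}}$ strongly in $L^2(Y)$ for each fixed $z$, combining this with the weak convergence $u_k\rightharpoonup u$ gives $c^{(z)}_k\to c^{(z)}$ for every $z\in\Z^3$. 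By Lemma \ref{l:foser}, $\|\curl_{\theta_k} u_k\|^2=\sum_{z}\|(\theta_k+2\pi z)\times c^{(z)}_k\|^2\leq C$ uniformly in $k$. A Fatou argument on the counting measure then yields $\sum_{z}\|(\theta+2\pi z)\times c^{(z)}\|^2<\infty$, which, again by Lemma \ref{l:foser}, means $u\in \dom(\curl_\theta)$ with
\[
  \curl_\theta u=\sum_{z\in\Z^3}\i(\theta+2\pi z)\times c^{(z)}\,e^{\i\langle\theta+2\pi z,\cdot\rangle_{\C^3}}.
\]

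For the weak convergence, I would fix $\phi\in[L^2(Y)]^3$ and expand it in the $\theta_k$-basis, setting $d^{(z)}_k\coloneqq \langle \phi,e^{\i\langle\theta_k+2\pi z,\cdot\rangle_{\C^3}}\rangle$ and $d^{(z)}\coloneqq \langle\phi,e^{\i\langle\theta+2\pi z,\cdot\rangle_{\C^3}}\rangle$. Parseval then gives
\[
  \langle \curl_{\theta_k} u_k,\phi\rangle=\sum_{z\in\Z^3}\langle \i(\theta_k+2\pi z)\times c^{(z)}_k,d^{(z)}_k\rangle_{\C^3}.
\]
Termwise passage to the limit is immediate from the convergences noted above; the main technical point, and the one I expect to require the most care, is controlling the tails of this infinite sum uniformly in $k$. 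This I would handle by Cauchy--Schwarz: the uniform bound $\sum_z\|(\theta_k+2\pi z)\times c^{(z)}_k\|^2\leq C$ together with $\sum_z\|d^{(z)}_k\|^2=\|\phi\|^2$ (constant in $k$) reduces the task to showing $\sum_{|z|>N}\|d^{(z)}_k\|^2\to\sum_{|z|>N}\|d^{(z)}\|^2$ as $k\to\infty$ for each fixed $N$, which follows from termwise convergence on the complementary finite set $|z|\leq N$. Choosing $N$ large so that $\sum_{|z|>N}\|d^{(z)}\|^2$ is small then makes the tail uniformly negligible. The limit of the truncated sums is $\sum_{z}\langle\i(\theta+2\pi z)\times c^{(z)},d^{(z)}\rangle=\langle\curl_\theta u,\phi\rangle$ by Parseval applied on the $\theta$-side, which establishes the assertion.
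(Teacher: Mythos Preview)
Your proposal is correct and follows essentially the same approach as the paper: both arguments pass to the Fourier side via $\mathcal{V}_{\theta_k}$ and Lemma \ref{l:foser}, establish termwise convergence of the coefficients $c^{(z)}_k\to c^{(z)}$, and then combine this with the uniform bound on $\|\curl_{\theta_k}u_k\|$ to conclude. The paper's version is considerably more terse---it simply records the Fourier representation of $\curl_{\theta_k}u_k$, says ``passing to the point-wise limit'' and invokes boundedness---whereas you have spelt out the Fatou step for membership in $\dom(\curl_\theta)$ and the Cauchy--Schwarz tail control for the weak convergence, which are exactly the details a careful reader would need to supply.
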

\begin{proof}
Recalling Definition \ref{def.vtheta}, we have 
\[
\begin{aligned}
u_k = \mathcal{V}_{\theta_k}^{-1} \big(c^{(z)}_k\big)_{z \in \Z^3} = \sum_{z \in \Z^3} c^{(z)}_k e^{\i\langle \theta_k + 2\pi z, \cdot \rangle_{\C^3}}, & \quad & u = \mathcal{V}_\theta^{-1} \big(c^{(z)}\big)_{z \in \Z^3} = \sum_{z \in \Z^3} c^{(z)} e^{\i\langle \theta + 2\pi z, \cdot \rangle_{\C^3}},
\end{aligned}
\]
for  $\big(c^{(z)}_k\big)_{z \in \Z^3} = \left(\langle u_k,e^{\i \langle \theta_k+2\pi z,\cdot\rangle_{\C^3}}\rangle\right)_{z\in \mathbb{Z}^3}, \big(c^{(z)}\big)_{z \in \Z^3} = \left(\langle u,e^{\i \langle \theta+2\pi z,\cdot\rangle_{\C^3}}\rangle\right)_{z\in \mathbb{Z}^3} \in \ \ell^2(\mathbb{Z}^3)$.
Lemma \ref{l:foser} states that
\[
\mathcal{V}_{\theta_k} \curl_{\theta_k}u_k  = \sum_{z \in \Z^3} \i(\theta_k + 2\pi z) \times c^{(z)}_k e^{\i\langle \theta_k + 2\pi z, \cdot \rangle_{\C^3}}.
\] 
Passing to the point-wise limit above, we determine that 
\[
\mathcal{V}_{\theta_k} \curl_{\theta_k}u_k \rightarrow  \sum_{z \in \Z^3} \i(\theta + 2\pi z) \times c^{(z)} e^{\i\langle \theta + 2\pi z, \cdot \rangle_{\C^3}},
\]
as $k \rightarrow \infty$. This, plus the assumption that $( \curl_{\theta_k} u_k )_{k \in \N}$ is bounded, implies the desired assertion. 
\end{proof}
\begin{proposition}
	\label{p:wmmax2} 
Let  $\ep, \mu \in \mathcal{M}^{\#}_{1,3}$, and $\eta >0$. Assume setting \eqref{set:max}.	Let $T : \Theta \rightarrow L([L^2(Y)]^6)$ be given by 
	\[
	\theta \mapsto  \left( \begin{pmatrix} \ep  & 0 \\ 0& \mu \end{pmatrix} + \frac{1}{\eta}\begin{pmatrix} 0 & - \curl_\theta \\ \curl_\theta & 0\end{pmatrix}\right)^{-1}.
	\]
	Then, $T$ is weakly continuous.
\end{proposition}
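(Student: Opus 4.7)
The plan is to mimic the strategy used in the proof of Theorem \ref{t:wm}, replacing the role of Proposition \ref{p:athetacont} by that of Proposition \ref{p:wmmax1}. The key ingredients are uniform bounds on the resolvents and on the image of the unbounded part under the resolvents, together with the fibre-wise weak continuity of $\curl_\theta$.

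First, I would establish the uniform bounds. Since $\ep, \mu \in \mathcal{M}^{\#}_{1,3}$, there exists $c>0$ such that $\Re \ep \geq c 1_{\mathbb{C}^3}$ and $\Re \mu \geq c 1_{\mathbb{C}^3}$, and, since $\curl_\theta$ is self-adjoint by Lemma \ref{l:foser}, the block operator on the diagonal has real part $\geq c$ on the whole space $[L^2(Y)]^6$ (the off-diagonal block being skew-self-adjoint). Hence, by Lemma \ref{l:invblty},
\[
\sup_{\theta \in \Theta} \|T(\theta)\| \leq \tfrac{1}{c}.
\]
Combining this with the identity $\tfrac{1}{\eta}\begin{pmatrix} 0 & -\curl_\theta \\ \curl_\theta & 0\end{pmatrix} T(\theta) = I - \begin{pmatrix} \ep & 0 \\ 0 & \mu\end{pmatrix} T(\theta)$, one obtains uniform (in $\theta$) bounds on $\curl_\theta$ applied to each component of the image of $T(\theta)$.

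Next, given a convergent sequence $\theta_k \to \theta$ in $\Theta$ and $(f,g) \in [L^2(Y)]^6$, set $(u_k, v_k) \coloneqq T(\theta_k)(f,g)$. By the definition of $T(\theta_k)$, for each $k \in \mathbb{N}$,
\begin{align*}
\ep u_k - \tfrac{1}{\eta} \curl_{\theta_k} v_k & = f, \\
\mu v_k + \tfrac{1}{\eta} \curl_{\theta_k} u_k & = g.
\end{align*}
By the uniform bounds above, $(u_k)_k$, $(v_k)_k$, $(\curl_{\theta_k} u_k)_k$ and $(\curl_{\theta_k} v_k)_k$ are all bounded in $[L^2(Y)]^3$. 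Extracting weakly convergent subsequences (not relabelled), $u_k \rightharpoonup u$ and $v_k \rightharpoonup v$ in $[L^2(Y)]^3$.

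Now I would invoke Proposition \ref{p:wmmax1} to deduce $u, v \in \dom(\curl_\theta)$ together with $\curl_{\theta_k} u_k \rightharpoonup \curl_\theta u$ and $\curl_{\theta_k} v_k \rightharpoonup \curl_\theta v$. Passing to the weak limit in the two displayed equations gives
\begin{align*}
\ep u - \tfrac{1}{\eta} \curl_\theta v & = f, \\
\mu v + \tfrac{1}{\eta} \curl_\theta u & = g,
\end{align*}
that is, $(u,v) = T(\theta)(f,g)$. Since the limit is uniquely determined by the identification $(u,v) = T(\theta)(f,g)$, a subsequence-subsequence argument shows that the whole sequence $T(\theta_k)(f,g)$ converges weakly to $T(\theta)(f,g)$. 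This proves the weak continuity of $T$. The main (and essentially only) obstacle is the passage to the weak limit in the curl terms at different quasi-periodicities; this is exactly the content of Proposition \ref{p:wmmax1}, and all remaining steps are linear-algebraic bookkeeping.
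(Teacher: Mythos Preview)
Your proposal is correct and follows essentially the same approach as the paper: uniform resolvent bounds via Lemma \ref{l:invblty}, extraction of weakly convergent subsequences, passage to the limit using Proposition \ref{p:wmmax1}, and identification of the limit as $T(\theta)(f,g)$ by uniqueness. Your write-up is slightly more explicit (spelling out the two component equations and the identity yielding the uniform curl bounds), but the argument is the same as the paper's.
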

\begin{proof}
Let $( \theta_k )_{k \in \N}$ be convergent in $\Theta$ to some limit $\theta$. We need to prove that for $f \in [L^2(Y)]^6$, then  the sequence 
$( T(\theta_k) f)_{k \in \N} $ is weakly convergent in $[L^2(Y)]^6$ to the limit $T(\theta) f$.

By Lemma \ref{l:invblty}, 
\[
\sup_{\theta \in \Theta}\Vert T(\theta) \Vert \le \tfrac{1}{\nu},
\]
where $\nu>0$ is such that $\Re \ep \ge \nu 1_{\C^3}$, $ \Re \mu \ge \nu 1_{\C^3}$. Therefore, $( T(\theta_k) f )_{k \in \N} $ weakly converges, up to a subsequence, to  some $u$. Moreover,
\[
\sup_{k \in \N} \Big\Vert \begin{pmatrix} 0 & - \curl_\theta \\ \curl_\theta & 0\end{pmatrix}T(\theta_k) f  \Big\Vert < \infty,
\]
and therefore, by Proposition \ref{p:wmmax1}, we deduce that $u = T(\theta) f$. Since $u$ is unique, the whole sequence $( T(\theta_k) f )_{k \in \N} $ weakly converges and the proof is established.
\end{proof}
The proof of Hypothesis \ref{hyp:grtheta} \ref{en:grt5} now follows from Proposition \ref{p:wmmax2}, as $T$ is weakly continuous and therefore weakly measurable. 
  
We conclude this section by proving Theorem \ref{t:ehom}. This result is a consequence of the following proposition and the assertion $\mathcal{U}_\eta \dive=\int_\Theta^\oplus \dive_\theta d\theta U_\eta$:
\begin{proposition}\label{p:ehom} Let $\eta>0$, $\eps,\mu\in\mathcal{M}_{1,3}^\#$, $f\in\kar(\dive_\theta)$. Then
 \begin{multline*}
  \left(\begin{pmatrix}
      \pi_{\textnormal{n}(\theta)} \epsilon \pi_{\textnormal{n}(\theta)} & 0 \\ 0 & \pi_{\textnormal{n}(\theta)}\mu \pi_{\textnormal{n}(\theta)}
    \end{pmatrix} + \frac{1}{\eta}\begin{pmatrix} 0 & -\curl_\theta \\  \curl_\theta & 0 \end{pmatrix}\right)^{-1} \begin{pmatrix} f \\ 0
    \end{pmatrix}\\
    =\left(\begin{pmatrix}
       \epsilon^{\textnormal{hom}}(\theta)\pi_{\textnormal{n}(\theta)}  & 0 \\ 0 & \mu^{\textnormal{hom}}(\theta)\pi_{\textnormal{n}(\theta)}
    \end{pmatrix} + \frac{1}{\eta}\begin{pmatrix} 0 & -\curl_\theta \\  \curl_\theta & 0 \end{pmatrix}\right)^{-1}\begin{pmatrix} f \\ 0
    \end{pmatrix}.
 \end{multline*}
\end{proposition}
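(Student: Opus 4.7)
The plan is to let $u = (u_1, u_2)$ denote the solution of the first resolvent system with right-hand side $(f, 0)$ and show that it solves the second system as well. Since both operators are continuously invertible (by Lemma \ref{l:invblty}, once one verifies $\Re M \ge c$ for both choices of $M$), uniqueness of solutions then delivers the desired equality.

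First, I use the hypothesis $f \in \kar(\dive_\theta)$ to note that $f$ is orthogonal to every $\grad_\theta p$ with $p \in H^1_\theta(Y) \bot e^{\i\langle\theta,\cdot\rangle_{\C^3}}$, and hence $\pi_{\textnormal{n}(\theta)} f = \pi_{\textnormal{n}_1(\theta)} f \in \textnormal{n}_1(\theta)$. Next, I project the two equations onto $\textnormal{r}(\theta)$. By Proposition \ref{p:curln} combined with Lemma \ref{l:Athetainv}, the operator $\curl_\theta$ commutes with $\pi_{\textnormal{r}(\theta)}$, while $\pi_{\textnormal{r}(\theta)}$ annihilates $\pi_{\textnormal{n}(\theta)}\mu\pi_{\textnormal{n}(\theta)} u_2$; hence the $\textnormal{r}$-projection of the second equation reduces to $\curl_\theta \pi_{\textnormal{r}(\theta)} u_1 = 0$, and Proposition \ref{p:poin} (injectivity of $\curl_\theta$ on $\textnormal{r}(\theta)$) gives $u_1 \in \textnormal{n}(\theta)$. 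The $\textnormal{r}$-projection of the first equation then only determines $\pi_{\textnormal{r}(\theta)} u_2$ through $\curl_\theta \pi_{\textnormal{r}(\theta)} u_2 = -\eta \pi_{\textnormal{r}(\theta)} f$ and plays no further role in what follows.

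Projecting the equations onto $\textnormal{n}(\theta)$ yields $\pi_{\textnormal{n}(\theta)}\epsilon\, u_1 = \pi_{\textnormal{n}_1(\theta)} f + \tfrac{1}{\eta}\curl_\theta \pi_{\textnormal{n}(\theta)} u_2$ and $\pi_{\textnormal{n}(\theta)}\mu \pi_{\textnormal{n}(\theta)} u_2 = -\tfrac{1}{\eta}\curl_\theta u_1$. By Proposition \ref{p:curln} both curl terms lie in $\textnormal{n}_1(\theta)$, so both left-hand sides belong to $\textnormal{n}_1(\theta)$. Writing $u_1 = e^{\i\langle\theta,\cdot\rangle_{\C^3}}\gamma + \grad_\theta N$ with $\gamma \in \C^3$ and $N \in H^1_\theta(Y) \bot e^{\i\langle\theta,\cdot\rangle_{\C^3}}$, the condition $\pi_{\textnormal{n}(\theta)}\epsilon u_1 \in \textnormal{n}_1(\theta)$ is equivalent to the cell-problem identity
\[
\langle \epsilon(\grad_\theta N + e^{\i\langle\theta,\cdot\rangle_{\C^3}}\gamma), \grad_\theta \phi\rangle = 0 \quad (\phi \in H^1_\theta(Y) \bot e^{\i\langle\theta,\cdot\rangle_{\C^3}}),
\]
which is the Maxwell-setting analogue of \eqref{cellprob}. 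An identical statement holds for $\mu$ and $\pi_{\textnormal{n}(\theta)} u_2$.

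By the homogenisation formula analogous to \eqref{homforma} (applied to $\epsilon, \mu \in \mathcal{M}_{1,3}^\#$ with the cell spaces coming from $\textnormal{n}(\theta)$ and $\textnormal{n}_1(\theta)$), this cell-problem characterisation yields $\pi_{\textnormal{n}(\theta)}\epsilon \pi_{\textnormal{n}(\theta)} u_1 = \epsilon^{\textnormal{hom}}(\theta) \pi_{\textnormal{n}(\theta)} u_1$ and $\pi_{\textnormal{n}(\theta)}\mu \pi_{\textnormal{n}(\theta)} u_2 = \mu^{\textnormal{hom}}(\theta) \pi_{\textnormal{n}(\theta)} u_2$, the natural convention being that $\epsilon^{\textnormal{hom}}(\theta)$ and $\mu^{\textnormal{hom}}(\theta)$ annihilate the gradient part of $\textnormal{n}(\theta)$. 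Substituting back into the first system shows that $u$ also solves the second system, which proves the claim. The main obstacle is the final cell-problem identification: one must transcribe the elliptic identity \eqref{homforma} to the Maxwell setting with care, confirming that the $\textnormal{n}_1(\theta)$-image of $\pi_{\textnormal{n}(\theta)}\epsilon \pi_{\textnormal{n}(\theta)}$ restricted to cell-problem solutions is exactly multiplication by $\epsilon^{\textnormal{hom}}(\theta)$, and likewise for $\mu$; everything else is bookkeeping of orthogonal projections.
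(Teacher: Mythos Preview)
Your strategy is essentially the paper's: take the solution $(E,H)$ of the first system and verify it solves the second, the heart of the argument being the cell-problem identification. The paper packages that identification as a separate lemma (Proposition~\ref{l:ep}): for $E\in\textnormal{n}(\theta)$ and $g\in\textnormal{n}_1(\theta)\oplus\textnormal{r}(\theta)$ one has $\pi_{\textnormal{n}(\theta)}\epsilon E=g \iff \epsilon^{\textnormal{hom}}(\theta)E=g$. The paper's proof of Proposition~\ref{p:ehom} then simply sets $\tilde E=\pi_{\textnormal{n}(\theta)}E$, $\tilde f=f+\tfrac1\eta\curl_\theta H$, notes $\tilde f\in\textnormal{n}_1(\theta)\oplus\textnormal{r}(\theta)$ (because both $f$ and $\curl_\theta H$ are orthogonal to $\textnormal{n}_2(\theta)$), and invokes Proposition~\ref{l:ep}. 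Your inline argument reproduces the content of that lemma, so the two routes coincide. Your extra step showing $u_1\in\textnormal{n}(\theta)$ via Proposition~\ref{p:poin} is correct but not needed in the paper's version, which works with $\pi_{\textnormal{n}(\theta)}E$ throughout.

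Two points to tighten. First, your opening invertibility claim for the second operator does \emph{not} follow from Lemma~\ref{l:invblty}: the block $\epsilon^{\textnormal{hom}}(\theta)\pi_{\textnormal{n}(\theta)}$ vanishes on $\textnormal{r}(\theta)$, so $\Re M\ge c$ fails on the full space. Invertibility is true, but it comes from the block decomposition along $\textnormal{n}(\theta)\oplus\textnormal{r}(\theta)$ together with the uniform invertibility of $\curl_\theta$ on $\textnormal{r}(\theta)$ (compare the structure in \eqref{liminv}). Second, your ``natural convention'' that $\epsilon^{\textnormal{hom}}(\theta)$ annihilates the gradient part of $\textnormal{n}(\theta)$ is an ad-hoc redefinition not made in the paper; the paper treats $\epsilon^{\textnormal{hom}}(\theta)$ as the constant matrix from Section~\ref{s:example} and records the needed identity as Proposition~\ref{l:ep}. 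You should appeal to that equivalence rather than introduce a nonstandard convention.
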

For the proof of this proposition, we will utilise the following result. 
\begin{proposition}\label{l:ep} Let $\theta\in \Theta$, $\epsilon\in\mathcal{M}^\#_{1,3}$, $E\in \textnormal{n}(\theta)$ and $f\in \textnormal{n}_1(\theta)\oplus \textnormal{r}(\theta)$. Then
\[
    \pi_{\textnormal{n}(\theta)}\epsilon E = f \iff \epsilon^{\textnormal{hom}}(\theta) E=f.
\]
\end{proposition}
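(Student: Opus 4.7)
The plan is to reduce both equations to the cell problem \eqref{cellprob} via the orthogonal decomposition $\textnormal{n}(\theta) = \textnormal{n}_1(\theta) \oplus \textnormal{n}_g(\theta)$, where $\textnormal{n}_g(\theta) \coloneqq \{\grad_\theta p : p \in H^1_\theta(Y) \perp e^{\i\langle\theta,\cdot\rangle_{\C^3}}\}$, and then invoke the identity that characterises $\epsilon^{\textnormal{hom}}(\theta)$ as a Schur-type complement exactly as in the proof of Theorem~\ref{t:quanthom}.

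First I would write $E = e^{\i\langle\theta,\cdot\rangle_{\C^3}}\gamma + \grad_\theta N$ for unique $\gamma \in \C^3$ and $N \in H^1_\theta(Y) \perp e^{\i\langle\theta,\cdot\rangle_{\C^3}}$. Since $\pi_{\textnormal{n}(\theta)}\epsilon E \in \textnormal{n}(\theta)$ while $f \in \textnormal{n}_1(\theta) \oplus \textnormal{r}(\theta)$, and $\textnormal{n}(\theta)\cap(\textnormal{n}_1(\theta)\oplus \textnormal{r}(\theta)) = \textnormal{n}_1(\theta)$ because $\textnormal{r}(\theta) = \textnormal{n}(\theta)^\perp$, the equation $\pi_n \epsilon E = f$ can hold only if the $\textnormal{r}(\theta)$-component of $f$ vanishes, so $f = e^{\i\langle\theta,\cdot\rangle_{\C^3}}\beta$ for some $\beta \in \C^3$. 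Moreover $\pi_{\textnormal{n}(\theta)}\epsilon E \in \textnormal{n}_1(\theta)$ is equivalent to $\epsilon E \perp \textnormal{n}_g(\theta)$, i.e.,
\[
\langle \epsilon(e^{\i\langle\theta,\cdot\rangle_{\C^3}}\gamma + \grad_\theta N), \grad_\theta p\rangle = 0 \qquad (p \in H^1_\theta(Y) \perp e^{\i\langle\theta,\cdot\rangle_{\C^3}}),
\]
which is precisely the cell problem \eqref{cellprob} (in the scalar case $n=1$, $d=3$, with $a$ replaced by $\epsilon$). Proposition~\ref{p:ahomwd} then gives the unique solution $N = N_{\theta\gamma}$, and the derivation \eqref{eq:hom1} from the proof of Theorem~\ref{t:quanthom} yields the identity $\pi_{\textnormal{n}(\theta)}\epsilon E = e^{\i\langle\theta,\cdot\rangle_{\C^3}}\epsilon^{\textnormal{hom}}(\theta)\gamma$, so the equation becomes $\epsilon^{\textnormal{hom}}(\theta)\gamma = \beta$, which is exactly $\epsilon^{\textnormal{hom}}(\theta)E = f$.

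For the converse I would reverse this chain. Starting from $\epsilon^{\textnormal{hom}}(\theta) E = f$ with $E \in \textnormal{n}(\theta)$ and $f \in \textnormal{n}_1(\theta)\oplus\textnormal{r}(\theta)$, I decompose in the basis $\{e^{\i\langle\theta+2\pi z,\cdot\rangle_{\C^3}}\}_{z\in\Z^3}$ and compare Fourier coefficients mode-by-mode. Using that $E\in\textnormal{n}(\theta)$ forces $c^{(z)}_E \parallel (\theta+2\pi z)$ (Lemma~\ref{l:co}) while $f\in\textnormal{n}_1(\theta)\oplus\textnormal{r}(\theta)$ forces $c^{(z)}_f \perp (\theta+2\pi z)$ for $z\neq 0$, and combining this with the coercivity $\Re\epsilon^{\textnormal{hom}}(\theta)\geq \nu 1_{\C^3}$ from Proposition~\ref{prop:ahom}\ref{ahom3} (so that $(\theta+2\pi z)^T\epsilon^{\textnormal{hom}}(\theta)(\theta+2\pi z)\neq 0$), one concludes that $E$ must lie in the graph of the map $\gamma\mapsto N_{\theta\gamma}$ from the cell problem. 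With the cell relation in hand, the identity from Theorem~\ref{t:quanthom} again gives $\pi_n\epsilon E = e^{\i\langle\theta,\cdot\rangle_{\C^3}}\epsilon^{\textnormal{hom}}(\theta)\gamma = f$.

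The main obstacle is this converse: one has to reconcile the pointwise action of the constant matrix $\epsilon^{\textnormal{hom}}(\theta)$ on $E$ with the Schur-complement meaning that $\epsilon^{\textnormal{hom}}(\theta)$ inherits from the definition \eqref{ahomtheta}. The Fourier argument above, together with the characterisations of $\textnormal{n}(\theta)$, $\textnormal{r}(\theta)$ in Lemma~\ref{l:co} and the invertibility properties of $\epsilon^{\textnormal{hom}}(\theta)$, is what bridges the two interpretations and collapses the equivalence to the single identity \eqref{eq:hom1}.
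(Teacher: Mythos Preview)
Your forward direction is essentially the paper's argument, written out in more detail. Both proofs first observe that $\pi_{\textnormal{n}(\theta)}\epsilon E\in\textnormal{n}(\theta)$ together with $f\in\textnormal{n}_1(\theta)\oplus\textnormal{r}(\theta)$ forces $f\in\textnormal{n}_1(\theta)$, and then that the $\textnormal{n}_2(\theta)$-component of the equation reads $\langle\epsilon E,\grad_\theta\phi\rangle=0$, which is the cell relation. The paper stops there and cites the variational definition of $\epsilon^{\textnormal{hom}}(\theta)$; you additionally identify $N=N_{\theta\gamma}$ and invoke \eqref{eq:hom1} explicitly, which amounts to the same thing.

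Your converse, however, has a genuine gap. The Fourier-mode computation you describe does work, but its conclusion is not what you claim. For $z\neq 0$ you have $c^{(z)}_E\parallel(\theta+2\pi z)$, $c^{(z)}_f\perp(\theta+2\pi z)$ and $\epsilon^{\textnormal{hom}}(\theta)c^{(z)}_E=c^{(z)}_f$; coercivity of $\epsilon^{\textnormal{hom}}(\theta)$ then forces $c^{(z)}_E=0$ for every $z\neq 0$. This says $E=e^{\i\langle\theta,\cdot\rangle}\gamma$ with \emph{zero} gradient part, not that $E$ lies on the graph $\gamma\mapsto e^{\i\langle\theta,\cdot\rangle}\gamma+\grad_\theta N_{\theta\gamma}$. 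Since $E=e^{\i\langle\theta,\cdot\rangle}\gamma$ does \emph{not} satisfy the cell relation \eqref{cellprob} unless $N_{\theta\gamma}=0$, you are not entitled to invoke \eqref{eq:hom1}; indeed $\pi_{\textnormal{n}_1(\theta)}\epsilon\big(e^{\i\langle\theta,\cdot\rangle}\gamma\big)=e^{\i\langle\theta,\cdot\rangle}\m(\epsilon)\gamma$, which is in general different from $e^{\i\langle\theta,\cdot\rangle}\epsilon^{\textnormal{hom}}(\theta)\gamma$. So the chain you propose to ``reverse'' does not close.

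The paper's converse is not via Fourier series at all: it simply says ``the other implication is similar'', meaning one runs the same orthogonality bookkeeping in reverse, staying at the level of the variational identity $\langle\epsilon E,\phi\rangle=\langle f,\phi\rangle$ for $\phi\in\textnormal{n}(\theta)$ that encodes $\epsilon^{\textnormal{hom}}(\theta)$. Your mode-by-mode route is a genuinely different tactic, but as written it reaches the wrong intermediate conclusion and the final appeal to \eqref{eq:hom1} is unjustified.
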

\begin{proof}
  Let $\textnormal{n}_2(\theta)\coloneqq \textnormal{n}(\theta)\ominus\textnormal{n}_1(\theta)=\{ \grad_\theta p; p\in H^1_\theta(Y)\bot e^{\i\langle\theta,\cdot\rangle_{\C^3}}\}$. Assume that $\pi_{\textnormal{n}(\theta)}\epsilon E = f$. Then, for all $\phi\in \textnormal{n}_2(\theta)$ we deduce that
  \[
     \langle \pi_{\textnormal{n}(\theta)}\epsilon E,\phi\rangle = \langle f,\phi\rangle = 0.
  \]
  Hence, as $\pi_{\textnormal{n}(\theta)}\phi=\phi$ for all $\phi\in\textnormal{n}_2(\theta)$ we obtain
  \[
      \langle \epsilon E,\phi\rangle = 0 \quad(\phi\in \textnormal{n}_2(\theta)).
  \]
  Moreover, projecting on $\textnormal{r}(\theta)$ reveals that
  \[
     \pi_{\textnormal{r}(\theta)} f = \pi_{\textnormal{r}(\theta)} \pi_{\textnormal{n}(\theta)} \epsilon E = 0.
  \]
  Thus, $f=\pi_{\textnormal{n}_1(\theta)}f$ and so
  \[
     \langle \epsilon  E,\phi\rangle = \langle f,\phi\rangle \quad (\phi\in\textnormal{n}(\theta)),
  \]
  which readily gives  
  \[
     \epsilon^{\textnormal{hom}}(\theta)E=f.  \]
  The other implication is similar.
  \end{proof}

\begin{proof}[Proof of Proposition \ref{p:ehom}]
 Let $(E,H)\in [\dom(\curl_\theta)]^2$ be such that
 \begin{equation*}
  \left(\begin{pmatrix}
      \pi_{\textnormal{n}(\theta)} \epsilon \pi_{\textnormal{n}(\theta)} & 0 \\ 0 & \pi_{\textnormal{n}(\theta)}\mu \pi_{\textnormal{n}(\theta)}
    \end{pmatrix} + \frac{1}{\eta}\begin{pmatrix} 0 & -\curl_\theta \\  \curl_\theta & 0 \end{pmatrix}\right) \begin{pmatrix} E \\ H
    \end{pmatrix}=\begin{pmatrix} f \\ 0
    \end{pmatrix}.
 \end{equation*}
 Equivalently,  \begin{align*}
   \pi_{\textnormal{n}(\theta)} \epsilon \pi_{\textnormal{n}(\theta)} E - \tfrac{1}{\eta}\curl_\theta H  = f, & \quad &  \pi_{\textnormal{n}(\theta)} \mu \pi_{\textnormal{n}(\theta)} H + \tfrac{1}{\eta}\curl_\theta E  = 0.
 \end{align*}
Let us focus on the first equation. One implication of this equations is that $\curl_\theta H\in \textnormal{n}_1(\theta)\oplus \textnormal{r}(\theta)$. Thus, for $\tilde f\coloneqq f+\tfrac{1}{\eta}\curl_\theta H$ and $\tilde E\coloneqq \pi_{\textnormal{n}(\theta)} E$, we have
 \[
   \pi_{\textnormal{n}(\theta)}\epsilon \tilde E = \tilde f\in \textnormal{n}_1(\theta)\oplus \textnormal{r}(\theta),
 \]
and Proposition \ref{l:ep} implies
 \[
   \epsilon^{\textnormal{hom}}(\theta)\tilde{E} = \tilde{f}.
 \]
 The argument for the second equation is completely analogous.  Thus, the desired assertion holds.
\end{proof}

The proof of Theorem \ref{t:ehom} now follows by applying Proposition \ref{p:ehom}  pointwise for any $\theta\in \Theta$.

\section*{Acknowledgements}
S. Cooper was supported by the EPSRC grant EP/M017281/1 (``Operator asymptotics, a new approach to length-scale interactions in metamaterials"). M. Waurick is grateful for the financial support of the EPSRC grant EP/L018802/1 (``Mathematical foundations of metamaterials: homogenisation, dissipation and operator theory'').


\begin{thebibliography}{9}
\bibitem{BeLiPa}{Bensoussan, A., Lions, J.-L., and Papanicolaou, G. C., 1978. {\it Asymptotic Analysis for Periodic Structures}, Amsterdam: North-Holland.
	} 
\bibitem{BiSu}{Birman, M. Sh., and Suslina, T. A., 2004. Second order periodic differential operators. Threshold
	properties and homogenisation. {\it St. Petersburg. Math. J.} 15(5), pp. 639-714.
}
\bibitem{ChCo}{ Cherednichenko, K.D., Cooper, S., 2016. Resolvent Estimates for High-Contrast Elliptic Problems with Periodic Coefficients. {\it Arch Rational Mech Anal} 219, pp. 1061-1086. 
}
\bibitem{ChWa}{Cherednichenko, K., Waurick, M., 2017. Resolvent estimates in homogenisation of periodic
	problems of fractional elasticity. {\it Preprint, University of Bath.}
}
\bibitem{CoVa}{Conca, C., Vanninathan, M., 1997.  Homogenisation of periodic structures via Bloch decomposition. {\it	SIAM J. Appl. Math.} 57, pp. 1639–1659.
}
\bibitem{tEGoWa}{ter Elst, A.F.M., Gordon, G., Waurick, M. 2017. The Dirichlet-to-Neumann operator for divergence form problems. {\it Preprint, University of Auckland}
}
\bibitem{Gr}{Griso, G., 2006. Interior error estimate for periodic homogenisation. {\it Anal. Appl.} 4(1), pp. 61–79.
}
\bibitem{KeLiSh}{Kenig, C.E., Lin, F., Shen, Z., 2012.  Convergence rates in L2 for elliptic homogenization
problems. {\it Arch. Ration. Mech. Anal.} 203(3), pp. 1009–1036.
}
\bibitem{Pi}{ Picard, R., McGhee, D., 2011. {\it Partial Differential Equations: A unified Hilbert Space Approach}, Expositions in Mathematics. DeGruyter, Berlin, 55.
}
\bibitem{PiTrWaWe}{ Picard, R.,  Trostorff, S.,  Waurick, M., and Wehowski. M., 2013. On Non-Autonomous Evolutionary Problems. {\it Journal of Evolution Equations}, 13, pp. 751-776.
}
\bibitem{Se}{ Senik, N. N., 2017. Homogenization for non-self-adjoint locally periodic elliptic operators. {\it arXiv}: 1703.02023.
}
\bibitem{Su1}{Suslina, T. A., 2013. Homogenization of the Dirichlet problem for elliptic systems: L2-
	operator error estimates. {\it Mathematika} 59, pp. 463–476.
}
\bibitem{Su2}{Suslina, T. A.,  2012. Homogenization of the Neumann Problem for Elliptic Systems with Periodic Coefficients. {\it SIAM Journal on Mathematical Analysis} 45(6).
}
\bibitem{Wa1}{Waurick M., 2013. Homogenization of a class of linear partial differential equations. 
	{\it Asymptotic Analysis} 82, pp. 271-294.
}
\bibitem{Wa2}{ Waurick, M., 2014. 	Homogenization in fractional elasticity. {\it SIAM J. Math. Anal.} 46(2), pp. 1551-1576.	
}
\bibitem{Wa3}{Waurick, M., 2016. On the homogenization of partial integro-differential-algebraic equations. {\it Operators and Matrices}, 10(2), pp. 247-283.
}
\bibitem{J1}{Zhikov, V. V., 1989. Spectral approach to asymptotic problems in diffusion. {\it Differ. Equ.} 25, pp. 33–39.
}
\bibitem{JKO}{Jikov, V.V., Kozlov, S.M., Oleinik, O.A., 1994. {\it Homogenization of Differential Operators
		and Integral Functionals,} Springer, Berlin.
}
\bibitem{JPa}{Zhikov, V.V., Pastukhova, S.E., 2005. On operator estimates for some problems in homogenization
	theory. {\it Russ. J. Math. Phys.} 12(4),  pp. 515–524.
}
\end{thebibliography}
\end{document}